\newtheorem{theorem}{Theorem}[section]
\newtheorem{lemma}{Lemma}[section]
\newtheorem{corollary}{Corollary}[section]
\newtheorem{proposition}{Proposition}[section]
\newtheorem{definition}{Definition}[section]
\newtheorem{remark}{Remark}[section]
\newtheorem{example}{Example}[section]
\title{Hartogs and open embeddings, proper maps, compactifications, cohomologies}
\date{}
\newif\ifuniqueAffiliation
\author{{\hspace{1mm}S. Feklistov} \\
}
\begin{document}
\maketitle

\begin{abstract}
In these (not-completed) notes, we study the Hartogs extension phenomenon for holomorphic sections of holomorphic vector bundles over complex analytic varieties. Namely, we study properties of the Hartogs extension phenomenon with respect to the open embeddings, proper maps, compactifications, relations with the compact supports first cohomology, and the Lefschetz type property for sections of sheaves. As an application, we get a convex-geometric criterion of the Hartogs phenomenon for complex almost homogeneous algebraic G-varieties, where G is a semiabelian variety.
\end{abstract}


\tableofcontents

\newpage

 \section{Introduction}\label{sec1}

The classical Hartogs extension theorem states that for every domain $W\subset\mathbb{C}^{n}(n>1)$ and a compact set $K\subset W$ such that $W\setminus K$ is a connected set, the restriction homomorphism $\Gamma(W,\mathcal{O})\to \Gamma(W\setminus K, \mathcal{O})$ is an isomorphism.

A natural question arises: is this true for general complex analytic spaces? This phenomenon has been extensively studied in many situations, including Stein manifolds and spaces, $(n-1)$-complete normal complex spaces, and so on (see, for instance, \cite{Andersson,AndrHill,BanStan,ColtRupp,Harvey,Fek1,Fek2,Dwilewicz,Merker,Vassiliadou,Viorel}). 

In this paper, we use classical homological algebra methods (see, for instance, \cite{Manin, Shapira}). For instance, we often use \cite[Propositions 1.5.6, 1.8.7, 1.8.8.]{Shapira}, facts from \cite[Chapter II]{Shapira}, and the diagram chasing.  

In Section \ref{sec2}, we introduce the notion of the Hartogs sheaves (Definition \ref{defHart}). It is convenient to give this in more general settings. Using the Andreotti-Hill trick \cite[corollary 4.3]{AndrHill}, we get for the case of normal complex analytic varieties that the finiteness of $H^{1}_{c}(X,\mathcal{O}_{X})$ implies that $\mathcal{O}_{X}$ is Hartogs (Proposition \ref{lemmonholom}). For any locally free $\mathcal{O}_{X}$-module of finite rank $\mathcal{F}$, we obtain that the vanishing of $H^{1}_{c}(X,\mathcal{F})$ implies that $\mathcal{F}$ is Hartogs (Lemma \ref{mainlemma1}). Note that the Andreotti-Hill trick does not apply to arbitrary locally free $\mathcal{O}_{X}$-modules of finite rank. So, we have a natural question: is it true that the condition $\dim_{\mathbb{C}}H^{1}_{c}(X,\mathcal{F})<\infty$ implies that $\mathcal{F}$ is Hartogs? This problem can be reduced to the case of holomorphic line bundles associated with effective Cartier divisors (Section \ref{sectionreduxtoline}). We solved this problem (Theorem \ref{mainthsheaf}) for the case of complex analytic manifolds $X$ with the following additional assumption: $X\setminus K$ admits non-constant holomorphic functions for a sufficiently large compact set $K\subset X$, and each irreducible divisor of $X$ with compact support is $\mathcal{F}$-removable (Definition \ref{defi22}). For example, each exceptional irreducible divisor is $\mathcal{F}$-removable for any locally free $\mathcal{O}_{X}$-module of finite rank $\mathcal{F}$ (Example \ref{removexample}).

With some additional conditions, we may prove that the sheaf is Hartogs if and only if it is Hartogs over each domain (Corollary \ref{corth4}, Proposition \ref{prop}). If a complex analytic variety has only one topological end, then we obtain the necessary and sufficient conditions of the Hartogs property (Theorem \ref{maintheorem1}, Theorem \ref{th1'}). For example, each locally free $\mathcal{O}_{X}$-module of finite rank over a cohomological $(n-1)$-complete complex manifold is Hartogs (Example \ref{nminus1convexexamplehartg}), and for any domain $X$ with connected boundary of a complex manifold $X'$ with $H^{1}(X',\mathcal{O}_{X'})=0$, we have that $H^{1}_{c}(X,\mathcal{O}_{X})=0$ implies that $\mathcal{O}_{X}$ is Hartogs (Example \ref{examp}).

The Hartogs property has a nice behavior with respect to the proper surjective holomorphic maps (Propositions \ref{prop1}, \ref{proposition24}, \ref{proposition25}). In particular, if $X$ is a holomorphically convex noncompact normal complex analytic variety, $R\colon X\to Y$ is the corresponding Remmert reduction, and $\mathcal{F}\in Vect(\mathcal{O}_{X})$, then $\mathcal{F}$ is Hartogs if and only if $\dim Y>1$ (Corollary \ref{holoconvex}). More examples of noncompact holomorphically convex varieties can be obtained the following way: let $X'$ be a normal complex projective variety, $D$ be an effective basepoint-free Cartier divisor with connected support $\operatorname{Supp}(D)$, then $X'\setminus \operatorname{Supp}(D)$ is holomorphically convex; moreover, the Remmert reduction induced by the holomorphic map associated with the complete linear system $|D|$ (Example \ref{exlinearsys}).  

In Section \ref{sec3}, we consider very specific open embeddings. Namely, we consider $(b,\sigma)$-compactified pairs (Definition \ref{Defibsigma}). Roughly speaking, the pair $(X,\mathcal{F})$ (here $X$ is a complex analytic variety, $\mathcal{F}$ is a sheaf of $\mathbb{C}$-vector spaces) is $(b,\sigma)$-compactified if $X$ admits a nice compactification $X'$ such that $X'\setminus X$ is a proper analytic set which has only $b$ connected components, and the sheaf $\mathcal{F}$ extends to a sheaf $\mathcal{F}'$ over $X'$ such that $\dim_{\mathbb{C}} H^{1}(X',\mathcal{F}')=\sigma$. For pairs of the form $(X,\mathcal{O}_{X})$, we have some facts on numbers $(b,\sigma)$. First, by the Nagata theorem, each complex algebraic variety admits a compactification in the category of complex algebraic varieties. It follows that each complex algebraic variety is $(b,\sigma)$-compactifiable for some $(b,\sigma)$. Now if $X$ is a complex algebraic manifold (i.e., nonsingular complex algebraic variety), then the pair $(b,\sigma)$ does not depend on any compactification in the category of complex analytic manifolds. Namely, $b$ is exactly the number of topological ends of $X$, and $\sigma$ is exactly the dimension of the Albanese variety of $X$ (Proposition \ref{prop27}). In the case of almost homogeneous algebraic $G$-manifolds, $\sigma$ is exactly the dimension of the Albanese variety of the open $G$-orbit (Remark \ref{remar12}). If $G$ is a connected complex linear algebraic group, then $\sigma=0$, and the Sumihiro theorem implies that each complex algebraic $G$-manifold admits a $G$-equivariant compactification; hence $X$ is $(b,0)$-compactifiable if and only if $X$ has only $b$ topological ends (Remark \ref{remar12}).

In Section \ref{sec4}, we study the Hartogs property for $(1,\sigma)$-compactified pairs (Theorem \ref{thm30}) and its relation with the Lefschetz property (Corollary \ref{lef}, Remark \ref{lefrem}). If the analytic set $X'\setminus X$ is the support of a nef divisor, then we obtain a characterization of the Hartogs property in terms of the Iitaka and numerical dimensions of this divisor (Corollary \ref{nefcase}).

In Section \ref{sec5}, we consider the case of almost homogeneous algebraic $G$-varieties and an example. We describe the space of germs of holomorphic functions at infinity analytic set (more precisely, the space $\Gamma(Z,\mathcal{O}_{X'}|_{Z})$, where $Z=X'\setminus X$ for some $G$-equivariant compactification $X'$ of $X$). If $G$ is a semiabelian variety (i.e., $G$ is an extension of an abelian variety by an algebraic torus), then we describe the space $\Gamma(Z,\mathcal{O}_{X'}|_{Z})$ in a more precise way. In particular, this implies the convex-geometric criterion of the Hartogs phenomenon (Theorem \ref{semitoric}).

\textbf{Some notation.}
\begin{itemize} 
\item $Sh(X)$ is the category of sheaves of abelian groups over topological space $X$. 
\item $Sh_{\mathbb{C}}(X)$ is the category of sheaves of $\mathbb{C}$-vector spaces over topological space $X$.
\item $Mod(\mathbb{C})$ is the category of $\mathbb{C}$-vector spaces.
\item $Coh(\mathcal{O}_{X})$ is the category of coherent $\mathcal{O}_{X}$-modules over complex analytic space $X$.
\item $Vect(\mathcal{O}_{X})$ is the category of locally free $\mathcal{O}_{X}$-modules of finite ranks over complex analytic space $X$.
\item $Top$ is the category of topological spaces.
\item $An$ is the category of complex analytic varieties (objects: reducible, irreducible, and countable at infinity complex analytic spaces; morphisms: holomorphic maps).
\item $An_{norm}$ is the category of normal complex analytic varieties (it is a full subcategory of $An$).
\item $An_{sm}$ is the category of complex analytic manifolds (it is a full subcategory of $An$).
\item $Al^{a}$ is the analytification of the category of complex algebraic varieties (objects: complex algebraic varieties considered as complex analytic varieties; morphisms: morphisms of algebraic varieties considered as holomorphic maps).
\item $Al^{a}_{norm}$ is the analytification of the category of normal complex algebraic varieties.
\item $Al^{a}_{sm}$ is the analytification of the category of nonsingular complex algebraic varieties. 
\item $An_{G}$ is the category of complex analytic $G$-varieties (here $G$ is a complex Lie group acting holomorphically). 
\item $Al^{a}_{G}$ is the analytification of the category of complex algebraic $G$-varieties (here $G$ is a complex algebraic group acting algebraically).
\item $Al^{a}_{G,norm}$ is the analytification of the category of normal complex algebraic $G$-varieties.
\item $Al^{a}_{G,sm}$ is the analytification of the category of nonsingular complex algebraic $G$-varieties. 
\item If $T\colon \mathcal{A}\to \mathcal{B}$ is a functor between abelian categories, then we denote by 
\begin{gather*}
\mathbf{R}T\colon D^{+}(\mathcal{A})\to D^{+}(\mathcal{B})
\end{gather*} the functor between corresponding derived categories.
\item If $Z\subset X$ is a closed subset of the topological space $X$, $i\colon Z\hookrightarrow X$ is the canonical closed immersion, and $\mathcal{F}$ is a sheaf over $X$, then the inverse image sheaf $i^{-1}\mathcal{F}$ is often denoted by $\mathcal{F}|_{Z}$.
\end{itemize}

\newpage
\section{Historical review}

To be completed

\newpage
\section{Hartogs extension phenomenon}\label{sec2}
Let $X$ be a connected locally compact Hausdorff topological space (briefly, \textbf{CLCH-space}), and let $Sh_{\mathbb{C}}(X)$ be the category of sheaves of $\mathbb{C}$-vector spaces over $X$, and let $Mod(\mathbb{C})$ be the category of $\mathbb{C}$-vector spaces. Let $K\subset X$ be a compact set. We have the following left exact functors: 
\begin{gather*}
\Gamma_{K}\colon Sh_{\mathbb{C}}(X)\to Sh_{\mathbb{C}}(X)\quad\mathcal{F}\to \Gamma_{K}(\mathcal{F})=\{U\to \Gamma_{K\cap U}(U,\mathcal{F})\} \\
\Gamma_{X\setminus K}\colon Sh_{\mathbb{C}}(X)\to Sh_{\mathbb{C}}(X)\quad\mathcal{F}\to \Gamma_{X\setminus K}(\mathcal{F})=\{U\to \Gamma_{(X\setminus K)\cap U}(U,\mathcal{F})\},
\end{gather*}
and the canonical morphisms of functors: 
\begin{gather*}
\Gamma_{K}\rightarrow 1_{Sh_{\mathbb{C}}(X)}\\ 
1_{Sh_{\mathbb{C}}(X)}\rightarrow \Gamma_{X\setminus K}.
\end{gather*}
\begin{remark}\label{remiK}\noindent
\begin{itemize}
\item Let $i_{K}\colon X\setminus K\hookrightarrow X$ be the canonical open immersion. We obtain the following functors: 
\begin{gather*}
(i_{K})^{-1}\colon Sh_{\mathbb{C}}(X)\to Sh_{\mathbb{C}}(X\setminus K)\\ 
(i_{K})_{*}\colon Sh_{\mathbb{C}}(X\setminus K)\to Sh_{\mathbb{C}}(X).
\end{gather*}
We have $\Gamma_{X\setminus K}=(i_{K})_{*}\circ (i_{K})^{-1}$.
\item Let $j_{K}\colon K\hookrightarrow X$ be the canonical closed immersion. We obtain the following functors: 
\begin{gather*}
(j_{K})^{-1}, (j_{K})^{!}\colon Sh_{\mathbb{C}}(X)\to Sh_{\mathbb{C}}(K).
\end{gather*}

We have $(j_{K})^{!}=(j_{K})^{-1}\circ \Gamma_{K}$  
\end{itemize}
\end{remark}
Let $\Gamma(X,-)\colon Sh_{\mathbb{C}}(X)\to Mod(\mathbb{C})$ be the global sections functor. We have the following canonical morphisms of the left exact functors: 
\begin{gather*}
\Gamma(X,-)\circ \Gamma_{K}\rightarrow\Gamma(X,-)\\ 
\Gamma(X,-) \rightarrow \Gamma(X,-)\circ \Gamma_{X\setminus K}.
\end{gather*}

Let us remark that $\Gamma(X,-)\circ \Gamma_{X\setminus K}=\Gamma_{X\setminus K}(X,-)$ and $\Gamma(X,-)\circ \Gamma_{K}=\Gamma_{K}(X,-)$. For any injective object $I\in Sh_{\mathbb{C}}(X)$, we have the following exact sequence: 
\begin{gather*}
0\to\Gamma_{K}(X,I)\to \Gamma(X,I) \to \Gamma_{X\setminus K}(X,I)\to 0.
\end{gather*}

It follows that for any sheaf $\mathcal{F}\in Sh_{\mathbb{C}}(X)$ we obtain the following distinguished triangle: 
\begin{equation}\label{eq2}
\mathbf{R}\Gamma_{K}(X,\mathcal{F})\to \mathbf{R}\Gamma(X,\mathcal{F})\to \mathbf{R}\Gamma_{X\setminus K}(X,\mathcal{F})\to_{+1}
\end{equation}

\begin{definition}\label{defHart}
Let $X$ be a CLCH-space, $\mathcal{F}\in Sh_{\mathbb{C}}(X)$, $K\subset X$ be a compact set.

\begin{enumerate}
\item The pair $(K,X)$ is called a \textbf{Hartogs pair}, if $X\setminus K$ is a connected set. 
\item Let $(K,X)$ is a Hartogs pair. We say that the sheaf $\mathcal{F}$ \textbf{is Hartogs w.r.t. $(K,X)$}, if the canonical morphism 
$\Gamma(X,\mathcal{F})\to \Gamma(X\setminus K,\mathcal{F})$ is an epimorphism. 
\item We say that the sheaf $\mathcal{F}$ \textbf{is Hartogs} if $\mathcal{F}$ is Hartogs with respect to every Hartogs pair $(K,X)$ 
\end{enumerate}
\end{definition}

A complex analytic space is called \textbf{complex analytic variety} if it is reducible, irreducible, and countable at infinity. 

\begin{example}
Note that if $X$ is a compact analytic space and $\mathcal{F}$ is a coherent $\mathcal{O}_{X}$-module, then $\mathcal{F}$ is not Hartogs. Let $(K,X)$ be a Hartogs pair such that $U=X\setminus K$ is a Stein space. The Cartan theorem A implies that $\dim_{\mathbb{C}} \Gamma (U,\mathcal{F})=\infty$, but the Cartan-Serre theorem implies that $\dim_{\mathbb{C}}\Gamma(X,\mathcal{F})<\infty$. 
\end{example}

Note that a noncompact CLCH-space $X$ has only one topological end if and only if for any compact set $K\subset X$ the pair $(\mu(K),X)$ is a Hartogs, where $\mu(K)$ is the union of $K$ with all connected components of $X\setminus K$ that are relatively compact in $X$ (see \cite{Freudenthal, Peschke} for more details about topological ends).

\begin{remark}\label{topologicalend}
Let us remember on the topological ends. Let $X$ be a topological space, and suppose that 
\begin{gather*}
K_{1}\subset K_{2}\subset\cdots \subset K_{n}\subset \cdots 
\end{gather*} is an ascending sequence of compact subsets of X whose interiors cover $X$ (that is, X is hemicompact). 
The complementary sets $C_{n}:= X\setminus K_{n}$ form an inverse sequence:
\begin{gather*}
C_{1}\supset C_{2}\supset\cdots \supset C_{n}\supset \cdots 
\end{gather*}
Define the \textbf{set of ends} $\mathcal{E}(X)$ as the collection of sequences $\{U_n\}$ of non-empty sets 
\begin{gather*}
U_{1}\supset U_{2}\supset\cdots \supset U_{n}\supset \cdots ,
\end{gather*}
where $U_{n}$ is a connected component of $C_n$. 

The number of ends does not depend on the specific sequence $\{K_{n}\}$ of compact sets and is denoted by $e(X)$. 

The definition of ends given above applies only to spaces X that possess an exhaustion by compact sets. Let $X$ be an arbitrary topological space (which is not necessarily hemicompact). Consider the direct system $\{K\}$ of compact subsets of $X$ and inclusion maps. There is a corresponding inverse system $\{\pi_{0}(X\setminus K)\}$, where $\pi_{0}(Y)$ denotes the set of connected components of a space $Y$, and each inclusion map $Y\hookrightarrow Z$ induces a function $\pi_{0}(Y) \to \pi_{0}(Z)$. Define the \textbf{set of ends} $\mathcal{E}(X)$ as the inverse limit of this inverse system. 
\end{remark}

Let $\Gamma_{c}(X,-)\colon Sh_{\mathbb{C}}(X)\to Mod(\mathbb{C})$ be the global sections functor with compact supports. Define \textbf{the sections at boundary functor} by the rule: 
\begin{gather*}
\Gamma (\partial X,-)\colon Sh_{\mathbb{C}}(X)\to Mod(\mathbb{C})\quad \mathcal{F}\to \Gamma(\partial X,\mathcal{F}):=\varinjlim\limits_{K\subset X}\Gamma_{X\setminus K}(X,\mathcal{F}).
\end{gather*}

Taking the direct limit over compact sets in (\ref{eq2}), we obtain 
\begin{equation}\label{eq3'}
\mathbf{R}\Gamma_{c}(X,\mathcal{F})\cong\varinjlim\limits_{K} \mathbf{R}\Gamma_{K}(X,\mathcal{F})
\end{equation}
and the following distinguished triangle: 
\begin{equation}\label{eq3}
\mathbf{R}\Gamma_{c}(X,\mathcal{F})\to \mathbf{R}\Gamma(X,\mathcal{F})\to \mathbf{R}\Gamma(\partial X,\mathcal{F})\to_{+1}
\end{equation}

Moreover, we have the following commutative diagram of distinguished triangles:

\begin{equation}\label{eq3''}
\begin{diagram} 
\node{\mathbf{R}\Gamma_{K}(X,\mathcal{F})} \arrow{e,t}{} \arrow{s,r}{}
\node{\mathbf{R}\Gamma(X,\mathcal{F})} \arrow{e,t}{} \arrow{s,r}{id} 
\node{\mathbf{R}\Gamma_{X\setminus K}(X,\mathcal{F})\to_{+1}} \arrow{s,r}{} 
\\
\node{\mathbf{R}\Gamma_{c}(X,\mathcal{F})} \arrow{e,t}{} 
\node{\mathbf{R}\Gamma(X,\mathcal{F})} \arrow{e,t}{} 
\node{\mathbf{R}\Gamma(\partial X,\mathcal{F})\to_{+1}}
\end{diagram}
\end{equation}

We only need the global sections of sheaves. We obviously obtain the following lemma, which follows from (\ref{eq3}), (\ref{eq3'}), (\ref{eq3''}).

\begin{lemma} \label{mainlemma-1}
\begin{enumerate}
\item Let $X$ be a noncompact CLCH-space and $\mathcal{F}\in Sh_{\mathbb{C}}(X)$. If $H^{1}_{c}(X,\mathcal{F})=0$, then the canonical morphism $\Gamma(X,\mathcal{F})\to \Gamma(\partial X,\mathcal{F})$ is an epimorphism.
\item Let $X$ be a noncompact CLCH-space and $\mathcal{F}\in Sh_{\mathbb{C}}(X)$. If for any compact set $K\subset X$ there exists a compact set $K'\subset X$ such that $K\subset K'$ and $H^{1}_{K'}(X,\mathcal{F})=0$, then $H^{1}_{c}(X,\mathcal{F})=0$. 
\item Let $X$ be a noncompact CLCH-space and $\mathcal{F}\in Sh_{\mathbb{C}}(X)$. Let $(K,X)$ be a Hartogs pair. Suppose the canonical morphism $\Gamma(X,\mathcal{F})\to \Gamma(\partial X,\mathcal{F})$ is an epimorphism and the canonical morphism 
$\Gamma(X\setminus K,\mathcal{F})\to \Gamma(\partial X,\mathcal{F})$ is a monomorphism. Then $\mathcal{F}$ is Hartogs w.r.t $(K,X)$.
\end{enumerate}
\end{lemma}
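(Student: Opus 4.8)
The plan is to obtain all three items formally, by applying the cohomology functors $H^{0}$ and $H^{1}$ to the distinguished triangles and the isomorphism already constructed above — namely (\ref{eq3}), (\ref{eq3'}) and (\ref{eq3''}) — together with the elementary fact that filtered colimits are exact in $Mod(\mathbb{C})$, so that $H^{i}$ commutes with the colimit $\varinjlim_{K}$ over the directed family of compact subsets of $X$; in particular $H^{0}(\mathbf{R}\Gamma(\partial X,\mathcal{F}))=\Gamma(\partial X,\mathcal{F})$ and $H^{1}_{c}(X,\mathcal{F})\cong\varinjlim_{K}H^{1}_{K}(X,\mathcal{F})$. For item (1), take the long exact cohomology sequence of the triangle (\ref{eq3}) in degrees $0$ and $1$; this gives exactness of
\[
\Gamma(X,\mathcal{F})\longrightarrow\Gamma(\partial X,\mathcal{F})\longrightarrow H^{1}_{c}(X,\mathcal{F}),
\]
so the hypothesis $H^{1}_{c}(X,\mathcal{F})=0$ forces the canonical map $\Gamma(X,\mathcal{F})\to\Gamma(\partial X,\mathcal{F})$ to be an epimorphism. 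For item (2), applying $H^{1}$ to the isomorphism (\ref{eq3'}) gives $H^{1}_{c}(X,\mathcal{F})\cong\varinjlim_{K}H^{1}_{K}(X,\mathcal{F})$; a class in this colimit is represented by some $\xi\in H^{1}_{K}(X,\mathcal{F})$ with $K$ compact, and by hypothesis there is a compact $K'\supset K$ with $H^{1}_{K'}(X,\mathcal{F})=0$, so the image of $\xi$ under the transition map $H^{1}_{K}(X,\mathcal{F})\to H^{1}_{K'}(X,\mathcal{F})$ is zero and hence $\xi$ represents $0$ in the colimit. (Equivalently, the compact sets $K'$ with $H^{1}_{K'}(X,\mathcal{F})=0$ are cofinal, so they compute the colimit, which is then $0$.) Thus $H^{1}_{c}(X,\mathcal{F})=0$.

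For item (3), apply $H^{0}$ to the commutative diagram of distinguished triangles (\ref{eq3''}); since a morphism of distinguished triangles induces a morphism of long exact sequences, all the resulting squares commute. Using the identification $\Gamma_{X\setminus K}(X,\mathcal{F})=\Gamma(X\setminus K,\mathcal{F})$ coming from $\Gamma_{X\setminus K}=(i_{K})_{*}(i_{K})^{-1}$ (Remark \ref{remiK}), the right-hand $H^{0}$-square reads
\[
\begin{array}{ccc}
\Gamma(X,\mathcal{F}) & \xrightarrow{\ \rho\ } & \Gamma(X\setminus K,\mathcal{F}) \\
\| & & \downarrow \\
\Gamma(X,\mathcal{F}) & \xrightarrow{\ \alpha\ } & \Gamma(\partial X,\mathcal{F})
\end{array}
\]
where $\rho$ is the restriction map, the left vertical arrow is the identity, the right vertical arrow $\beta\colon\Gamma(X\setminus K,\mathcal{F})\to\Gamma(\partial X,\mathcal{F})$ is the canonical morphism — a monomorphism by hypothesis — and $\alpha$ is an epimorphism by hypothesis. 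Then the conclusion is a one-line diagram chase: given $t\in\Gamma(X\setminus K,\mathcal{F})$, choose $s\in\Gamma(X,\mathcal{F})$ with $\alpha(s)=\beta(t)$; commutativity gives $\beta(\rho(s))=\alpha(s)=\beta(t)$, so injectivity of $\beta$ yields $\rho(s)=t$. Hence $\rho$ is surjective, which by Definition \ref{defHart} means exactly that $\mathcal{F}$ is Hartogs with respect to $(K,X)$.

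Since the lemma is a purely formal consequence of the triangles (\ref{eq3})--(\ref{eq3''}), I expect no genuine obstacle. The only points that deserve a word of care are that $H^{0}$ and $H^{1}$ commute with the filtered colimit over compact sets (exactness of filtered colimits of $\mathbb{C}$-vector spaces), and that one keeps track of which maps in the $H^{0}$-diagram are being manipulated — in particular that the middle vertical arrow of (\ref{eq3''}) is the identity, so that the displayed square literally commutes and the chase in item (3) is legitimate.
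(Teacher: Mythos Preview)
Your proof is correct and follows exactly the approach the paper indicates: the paper simply states that the lemma ``obviously'' follows from (\ref{eq3}), (\ref{eq3'}), (\ref{eq3''}), and you have spelled out the long exact sequences and the diagram chase that make this precise. There is nothing to add.
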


\begin{remark}\label{identlemma}
The identity theorem for locally free $\mathcal{O}_{X}$-modules of finite rank: let $\mathcal{F}$ be a locally free $\mathcal{O}_{X}$-module of finite rank over a complex analytic variety $X$ and $f\in \Gamma(X,\mathcal{F})$; if there exists an open subset $U\subset X$ such that $f|_{U}=0$, then $f=0$. 

Assume that $\mathcal{F}$ is an arbitrary coherent $\mathcal{O}_{X}$-module. There exists an analytic thin subset $S\subset X$ such that $\mathcal{F}|_{X\setminus S}$ is a locally free $\mathcal{O}_{X\setminus S}$-module of finite rank \cite[Chapter 1, Section 7]{Grauert}. If the restriction homomorphism $\Gamma(X,\mathcal{F})\to \Gamma (X\setminus S, \mathcal{F})$ is a monomorphism, then the identity theorem is also true. 

Define $S_{m}(\mathcal{F}):=\{x\in X\mid \operatorname{codh}\mathcal{F}_{x}\leq m\}$. Note that $S=S_{n-1}(\mathcal{F})$, $S_{i}(\mathcal{F})\subset S_{i+1}(\mathcal{F})$. If $\dim S_{i+1}(\mathcal{F})<i+1$ for all $i$ (in particular, $\dim S<n-1$), then the restriction homomorphism $\Gamma(X,\mathcal{F})\to \Gamma (X\setminus S, \mathcal{F})$ is a monomorphism \cite[Chapter II, Theorem 5.8]{Grauert}. This is also true for the reflexive sheaves over normal complex analytic varieties with $\dim S<n-1$ \cite[Chapter II, Theorem 5.29]{Grauert}.
\end{remark}

Now we give the sufficient condition of the Hartogs phenomenon for the locally free $\mathcal{O}_{X}$-modules of finite ranks.

\begin{lemma}\label{mainlemma1}

Let $X$ be a noncompact complex analytic variety, and let $\mathcal{F}\in Vect(\mathcal{O}_{X})$ be a locally free $\mathcal{O}_{X}$-module of finite rank. If $H^{1}_{c}(X,\mathcal{F})=0$, then the canonical morphism 
\begin{gather*}
\Gamma(X,\mathcal{F})\to \Gamma(\partial X,\mathcal{F})
\end{gather*} is an isomorphism. In particular, $\mathcal{F}$ is Hartogs and $X$ has only one topological end. 
\end{lemma}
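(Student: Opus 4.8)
The plan is to feed the hypothesis $H^1_c(X,\mathcal{F})=0$ through the machinery already assembled, and then handle the injectivity of the boundary map separately using the identity theorem from Remark \ref{identlemma}. First I would apply part (1) of Lemma \ref{mainlemma-1}: since $X$ is noncompact and $H^1_c(X,\mathcal{F})=0$, the canonical morphism $\Gamma(X,\mathcal{F})\to\Gamma(\partial X,\mathcal{F})$ is an epimorphism. So the whole content is to show this map is also a monomorphism, and then to deduce the Hartogs property and the one-end statement.

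For injectivity: suppose $f\in\Gamma(X,\mathcal{F})$ maps to $0$ in $\Gamma(\partial X,\mathcal{F})=\varinjlim_K\Gamma_{X\setminus K}(X,\mathcal{F})$. By definition of the direct limit there is a compact $K\subset X$ with $f|_{X\setminus K}=0$, i.e. $f$ vanishes on the open set $X\setminus K$. Here I would use that $X$ is a complex analytic \emph{variety} (irreducible, hence in particular $X\setminus K$ is nonempty open in the connected $X$), so the identity theorem for locally free $\mathcal{O}_X$-modules of finite rank (Remark \ref{identlemma}) applies and forces $f=0$. Thus $\Gamma(X,\mathcal{F})\to\Gamma(\partial X,\mathcal{F})$ is an isomorphism.

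Next, to get that $\mathcal{F}$ is Hartogs: let $(K,X)$ be any Hartogs pair, so $X\setminus K$ is connected. I want to apply part (3) of Lemma \ref{mainlemma-1}, for which I need the map $\Gamma(X\setminus K,\mathcal{F})\to\Gamma(\partial X,\mathcal{F})$ to be a monomorphism. This follows again from the identity theorem applied on $X\setminus K$: a section on $X\setminus K$ that dies in the direct limit vanishes on some $X\setminus K'$ with $K'\supset K$, hence vanishes on a nonempty open subset of the connected complex variety $X\setminus K$ (connectedness of $X\setminus K$ is exactly the Hartogs-pair condition, and irreducibility is inherited appropriately so the identity theorem is available), so it is zero. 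Combined with the epimorphism from part (1), Lemma \ref{mainlemma-1}(3) gives that $\mathcal{F}$ is Hartogs w.r.t. $(K,X)$; since $(K,X)$ was arbitrary, $\mathcal{F}$ is Hartogs.

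Finally, the one-topological-end claim: for any compact $K\subset X$, form $\mu(K)$, the union of $K$ with all relatively compact connected components of $X\setminus K$; then $(\mu(K),X)$ is a Hartogs pair (this is the remark preceding Remark \ref{topologicalend}), and $X$ has one end iff every such $(\mu(K),X)$ is a Hartogs pair — but that is automatic from the definition of $\mu(K)$. The actual point is that $X$ is noncompact with $H^1_c(X,\mathcal{F})=0$; one must rule out $X$ having several ends. Here I expect the main obstacle: I would argue that if $X$ had at least two ends, one could choose a compact $K$ for which $X\setminus K$ has (at least) two non-relatively-compact components $U_1,U_2$, and then produce a section supported near one end (e.g. the section equal to a nonzero constant multiple of a local frame on a neighborhood of $U_1$, glued to $0$ elsewhere using a component-wise definition) that is nonzero on $\partial X$ but whose class is inconsistent — more precisely, such a section would witness that $\Gamma(X,\mathcal{F})\to\Gamma(\partial X,\mathcal{F})$ fails to be surjective, contradicting what we just proved; alternatively, the cleaner route is: with two ends the map $\Gamma(X\setminus\mu(K),\mathcal{F})\to\Gamma(\partial X,\mathcal{F})$ would not be injective (a section nonzero on one end component and zero on the other maps to something nonzero, but one can instead exhibit distinct preimages), again contradicting the isomorphism. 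I would lean on the decomposition $\Gamma(\partial X,\mathcal{F})$ into contributions of ends and note that surjectivity of $\Gamma(X,\mathcal{F})\to\Gamma(\partial X,\mathcal{F})$ together with the identity theorem forces the target to be "connected" in the sense that it comes from a single end. Making this last step fully rigorous — pinning down the structure of $\Gamma(\partial X,\mathcal{F})$ in the presence of several ends — is the part I expect to require the most care.
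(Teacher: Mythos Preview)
Your argument is correct and matches the paper's proof, which is literally two sentences: the identity theorem (Remark~\ref{identlemma}) makes $\Gamma(X\setminus K,\mathcal{F})\to\Gamma(\partial X,\mathcal{F})$ injective for every Hartogs pair $(K,X)$ (the case $K=\emptyset$ gives your injectivity step), and then Lemma~\ref{mainlemma-1} finishes. The paper does not spell out the one-end conclusion at all; your eventual sketch (two ends would let you build a class in $\Gamma(\partial X,\mathcal{F})$ supported on a single end, which by the identity theorem on $X$ cannot come from a global section, contradicting surjectivity) is the correct idea, but your first pass contains a slip: the claim that ``$(\mu(K),X)$ is a Hartogs pair \dots\ automatic from the definition of $\mu(K)$'' is false, since $X\setminus\mu(K)$ is precisely the union of the non-relatively-compact components of $X\setminus K$, and its connectedness is exactly the one-end condition you are trying to establish.
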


\begin{proof}
By the identity theorem (see Remark \ref{identlemma}), $\Gamma(X\setminus K,\mathcal{F})\to \Gamma(\partial X,\mathcal{F})$ is a monomorphism for any Hartogs pair $(K,X)$. Finally, we use lemma \ref{mainlemma-1}. 
\end{proof}

\begin{remark}
Let $\mathcal{F}$ be a coherent $\mathcal{O}_{X}$-module and $S$ be the singular locus of $\mathcal{F}$. If the restriction homomorphism $\mathcal{F}(X)\to \mathcal{F}(X\setminus S)$ is an injective, then the Lemma \ref{mainlemma1} is also true for that $\mathcal{F}$ (see Remark \ref{identlemma}).
\end{remark}

\begin{example}\label{nminus1convexexamplehartg}
Let $X$ be a cohomological $(n-1)$-complete complex manifold, $n>1$, and $\mathcal{F}$ be a locally free $\mathcal{O}_{X}$-module of finite rank. By \cite[Proposition 1]{Viorel2}, it follows that $ H^{1}_{c}(X,\mathcal{F})=0$. In this case, $\mathcal{F}$ is Hartogs. 
\end{example}

In the case of normal complex analytic spaces and the structure sheaves, we only require the finite dimension of the space $H^{1}_{c}(X,\mathcal{O}_{X})$ for the Hartogs phenomenon. 

\begin{proposition}\label{lemmonholom}
Let $X$ be a noncompact normal complex analytic variety that has only one topological end. If $m:=\dim_{\mathbb{C}} H^{1}_{c}(X,\mathcal{O}_{X})<\infty$, then the canonical homomorphism 
\begin{gather*}
\Gamma(X,\mathcal{O}_{X})\to \Gamma(\partial X,\mathcal{O}_{X})
\end{gather*} is an isomorphism. In particular, the sheaf $\mathcal{O}_{X}$ is Hartogs.
\end{proposition}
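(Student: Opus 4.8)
The plan is to reduce Proposition \ref{lemmonholom} to Lemma \ref{mainlemma1} by passing from the coherent sheaf $\mathcal{O}_X$ (whose compactly supported first cohomology is only finite-dimensional, not zero) to a locally free sheaf for which the relevant cohomology actually vanishes. The mechanism is the Andreotti--Hill trick: the finite-dimensional space $V:=H^{1}_{c}(X,\mathcal{O}_{X})$ carries a natural structure, and one produces, from a basis $f_{1},\dots,f_{m}$ of $V$ (or rather from suitable global data), a holomorphic map or an exact sequence that ``absorbs'' this finite-dimensional obstruction. Concretely, I would look for a short exact sequence of $\mathcal{O}_X$-modules
\begin{gather*}
0\to \mathcal{O}_{X}^{\oplus m}\to \mathcal{E}\to \mathcal{O}_{X}\to 0
\end{gather*}
(an extension class chosen to kill the connecting map), or dually a surjection $\mathcal{O}_{X}^{\oplus(m+1)}\twoheadrightarrow\mathcal{O}_X$, engineered so that in the long exact sequence for $H^{\bullet}_{c}$ the map $H^{0}_{c}(X,\mathcal{O}_{X})^{\oplus?}\to H^{1}_{c}(X,\mathcal{O}_{X})^{?}$ is onto, forcing $H^{1}_{c}(X,\mathcal{E})=0$ for the auxiliary locally free sheaf $\mathcal{E}\in Vect(\mathcal{O}_X)$. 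Since $X$ is noncompact with one topological end, $H^{0}_{c}(X,\mathcal{O}_{X})=0$, so the naive extension does not immediately help; this is where the Andreotti--Hill construction proper enters, using that $H^{1}_{c}(X,\mathcal{O}_{X})$ is finite-dimensional to build the needed auxiliary bundle over $X$ itself rather than just locally.

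Granting such an $\mathcal{E}\in Vect(\mathcal{O}_X)$ with $H^{1}_{c}(X,\mathcal{E})=0$, Lemma \ref{mainlemma1} gives that $\Gamma(X,\mathcal{E})\to\Gamma(\partial X,\mathcal{E})$ is an isomorphism, hence $\mathcal{E}$ is Hartogs. The next step is to transfer this back to $\mathcal{O}_X$. If the exact sequence above is available, I would apply $\Gamma(X,-)$ and $\Gamma(\partial X,-)$, getting a commutative ladder of long exact sequences; the five lemma together with the identity theorem (Remark \ref{identlemma}, noting $X$ is normal so the structure sheaf satisfies the identity principle) and the already-known isomorphism for $\mathcal{O}_{X}^{\oplus m}$ on the one hand and for $\mathcal{E}$ on the other should squeeze out that $\Gamma(X,\mathcal{O}_{X})\to\Gamma(\partial X,\mathcal{O}_{X})$ is an isomorphism as well. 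Equivalently, using Lemma \ref{mainlemma-1}(3): the identity theorem gives injectivity of $\Gamma(X\setminus K,\mathcal{O}_{X})\to\Gamma(\partial X,\mathcal{O}_{X})$ for every Hartogs pair $(K,X)$, so it suffices to show $\Gamma(X,\mathcal{O}_{X})\to\Gamma(\partial X,\mathcal{O}_{X})$ is epi, and this epimorphism is what the ladder/diagram chase delivers from the vanishing on $\mathcal{E}$.

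The main obstacle, I expect, is the construction of the auxiliary locally free sheaf $\mathcal{E}$ and verifying that it genuinely kills $H^{1}_{c}$: one must check that the extension (or the chosen surjection) can be defined globally over the noncompact $X$ and that the connecting homomorphism in compactly supported cohomology behaves as desired. This is precisely the content of \cite[Corollary 4.3]{AndrHill} cited in the introduction, so the proof will amount to invoking that corollary correctly and then running the homological-algebra comparison; the delicate point is matching hypotheses (normality of $X$, one end, finite-dimensionality of $H^{1}_{c}$) to what the Andreotti--Hill argument needs, and confirming that the map on global sections over $X\setminus K$ remains compatible under the extension so that the diagram chase closes. A secondary technical point is ensuring the direct-limit functor $\Gamma(\partial X,-)=\varinjlim_{K}\Gamma_{X\setminus K}(X,-)$ is exact enough (it is a filtered colimit of left-exact functors, and the relevant $R^{1}$-terms are controlled by $H^{1}_{c}$) for the ladder of long exact sequences to be valid.
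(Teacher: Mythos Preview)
Your proposal has a genuine gap: the strategy you outline is not what the Andreotti--Hill trick actually is, and the argument as sketched is circular.

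First, the circularity. You propose an extension $0\to\mathcal{O}_X^{\oplus m}\to\mathcal{E}\to\mathcal{O}_X\to 0$, hope to arrange $H^1_c(X,\mathcal{E})=0$, apply Lemma \ref{mainlemma1} to $\mathcal{E}$, and then transfer to $\mathcal{O}_X$ via a five-lemma ladder. But in that ladder the map $\Gamma(X,\mathcal{O}_X^{\oplus m})\to\Gamma(\partial X,\mathcal{O}_X^{\oplus m})$ is just $m$ copies of the very map you are trying to prove is an isomorphism; you cannot feed it into the five lemma as ``already known.'' You also correctly note that $H^0_c(X,\mathcal{O}_X)=0$, so the connecting map you would need to be surjective is the zero map, and no choice of extension class in $H^1(X,\mathcal{O}_X)^{\oplus m}$ can make $H^1_c(X,\mathcal{E})$ vanish this way. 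There is no auxiliary-bundle construction hiding behind \cite[Corollary 4.3]{AndrHill}; that corollary does not produce a vector bundle at all.

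The paper's proof is entirely different and uses the \emph{ring} structure of $\Gamma(\partial X,\mathcal{O}_X)$, not any homological comparison with an auxiliary $\mathcal{E}$. One works directly with the exact sequence
\[
0\to\Gamma(X,\mathcal{O}_X)\xrightarrow{r}\Gamma(\partial X,\mathcal{O}_X)\xrightarrow{c}H^1_c(X,\mathcal{O}_X).
\]
Given a nonconstant $f\in\Gamma(\partial X,\mathcal{O}_X)$, the $m+1$ elements $c(f),\dots,c(f^{m+1})$ are linearly dependent, so some nontrivial polynomial $P(f)$ lies in the image of $r$, say $P(f)=r(H)$. Repeating the trick with $f, r(H)f,\dots,r(H)^m f$ produces global $F,G\in\Gamma(X,\mathcal{O}_X)$ with $r(F)=r(G)f$. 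The polynomial identity $H=P(F/G)$ then shows $F/G$ is locally bounded along $\{G=0\}$, and the Riemann extension theorem (this is where normality is used) extends $F/G$ to a global holomorphic function with $r(F/G)=f$. Thus $r$ is surjective; injectivity is the identity theorem, and the Hartogs conclusion follows from Lemma \ref{mainlemma-1}. The essential input you are missing is this algebraic-dependence-plus-Riemann-extension argument; it does not fit the extension/five-lemma template you proposed.
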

\begin{proof}
We have the following long exact sequence: 

\begin{equation*}
	\xymatrix@C=0.5cm{
0 \ar[r] &  \Gamma(X,\mathcal{O}_{X}) \ar[rr]^{r} && \Gamma(\partial X,\mathcal{O}_{X}) \ar[rr]^{c} && H^{1}_{c}(X,\mathcal{O}_{X}) \ar[r] & \cdots }
\end{equation*}

Now we use the Andreotti-Hill trick (see \cite[corollary 4.3]{AndrHill}). We may assume that $\Gamma(\partial X,\mathcal{O}_{X})\neq\mathbb{C}$. Consider an equivalence class $f\in \Gamma(\partial X,\mathcal{O}_{X})$ of a nonconstant holomorphic function on $X\setminus K$.

We may assume that $c(f^{i})$ are non-zero for any $1\leq i\leq m+1$ replacing $f$ by $af+b$ for some $a,b\in\mathbb{C}\setminus \{0\}$. So, the elements $c(f), c(f^{2}),\cdots, c(f^{m+1})$ are non-zero and linearly dependent. This means that there exists a non-zero polynomial $P\in \mathbb{C}[T]$ of degree $m+1$ such that $c(P(f))=0$. It follows that there is a non-constant holomorphic function $H\in\Gamma(Y,\mathcal{O})$ such that $r(H)=P(f)$.

Now the elements $ c(f),c(r(H)f),\cdots, c(r(H)^{m}f)$ are non-zero and linearly dependent. Hence there exists a polynomial $P_{1}\in \mathbb{C}[T]$ such that $c(P_{1}(r(H))f)=0$. It follows that there exists a holomorphic function $F\in\Gamma(Y,\mathcal{O})$ such that $r(F)=P_{1}(r(H))f$. Denoting $G=P_{1}(H)$, we obtain $r(F)=r(G)f$.

Since $r(G^{m+1}H)=r(G^{m+1}P(F/G))$, it follows that $G^{m+1}H=G^{m+1}P(F/G)$. Hence, we obtain $H=P(F/G)$ on $Y\setminus \{G=0\}$. It follows that $F/G\in \Gamma(Y\setminus \{G=0\}, \mathcal{O})$ and is locally bounded on $Y\setminus \{G=0\}$. Since $G\neq 0$, the Riemann extension theorem implies that $F/G\in \Gamma(Y,\mathcal{O})$ and $r(F/G)=f$. Therefore, the canonical homomorphism $r$ is isomorphic. The last statement follows from Lemma \ref{mainlemma-1}.
\end{proof}

\begin{remark}\label{remarkoncoher1}\noindent
\begin{enumerate}
\item Note that a noncompact CLCH-space $X$ has at most $\dim_{\mathbb{C}} H^{1}_{c}(X,\mathcal{O}_{X})$ topological ends. 
\item Proposition \ref{lemmonholom} is still true if we replace $\mathcal{O}_{X}$ by a sheaf of ideals of $\mathcal{O}_{X}$ or by a free $\mathcal{O}_{X}$-modules. 
\item We may use the Andreotti-Hill trick to the following long exact sequence 
\begin{gather*}
0\to\Gamma(X, \mathcal{O}_{X}) \to \Gamma(X\setminus K, \mathcal{O}_{X}) \to H^{1}_{K}(X,\mathcal{O}_{X}) \to \cdots .
\end{gather*} Hence if $X$ is a noncompact normal complex analytic variety and $(K, X)$ is a Hartogs pair, then the condition $\dim_{\mathbb{C}} H^{1}_{K}(X,\mathcal{O}_{X})<\infty$ implies that the sheaf $\mathcal{O}_{X}$ is Hartogs w.r.t. $(K,X)$.
\item The Andreotti-Hill trick may be applied to the sheaf of meromorphic function $\mathcal{M}$ over a noncompact complex analytic variety (which is not necessarily normal). Note that the identity theorem is also true for the meromorphic functions. It follows that Lemma \ref{mainlemma1} is true for this case. 
\end{enumerate}
\end{remark}

\textbf{Question: } Let $X$ be a noncompact normal complex analytic variety which has only one topological end, and let $\mathcal{F}$ be a locally free $\mathcal{O}_{X}$-module of finite rank. Is it true that the condition $\dim_{\mathbb{C}}H^{1}_{c}(X,\mathcal{F})<\infty$ implies that $\mathcal{F}$ is Hartogs? 

This problem can be reduced to the case of holomorphic line bundles associated with effective Cartier divisors (see section \ref{sectionreduxtoline}). We solved this problem for the case of complex analytic manifolds $X$ with the following additional assumption: $X\setminus K$ have non-constant holomorphic functions for a sufficiently large compact set $K\subset X$ and any irreducible divisor $D$ of $X$ with compact support if $\mathcal{F}$-removable (i.e., the restriction homomorphism 
$\Gamma(X,\mathcal{F})\to \Gamma(X\setminus D,\mathcal{F})$ is an epimorphism).

\subsection{Hartogs and open embeddings}

Let $X$ be a CLCH-space. For any domain $W\subset X$ (i.e., open and connected subset of $X$) and for any compact subset $K\subset W$, there exists the following functor: 
\begin{gather*}
\Gamma_{X\setminus K}(X,-)\times_{\Gamma_{W\setminus K}(X,-)}\Gamma_{W}(X,-)\colon Sh_{\mathbb{C}}(X)\to Mod(\mathbb{C}).
\end{gather*}

\begin{remark}\label{mainlemma2}
If $X$ is a connected topological space, $W\subset X$ is a domain, and $(K, W)$ is a Hartogs pair, then $(K, X)$ is a Hartogs pair. 
\end{remark}

Since $W\setminus K=(X\setminus K) \cap W$, it follows that the following natural isomorphism of functors: 
\begin{gather*} 
\Gamma(X,-)= \Gamma_{X\setminus K}(X,-)\times_{\Gamma_{W\setminus K}(X,-)}\Gamma_{W}(X,-).
\end{gather*}

In particular, we obtain the following easy lemma. 

\begin{lemma}\label{corth5}
Let $X$ be a CLCH-space, $\mathcal{F}\in Sh_{\mathbb{C}}(X)$, $W\subset X$ be a domain, and $(K, W)$ be a Hartogs pair. If $\mathcal{F}|_{W}$ is Hartogs w.r.t. $(K,W)$, then $\mathcal{F}$ is Hartogs w.r.t. $(K,X)$. 
\end{lemma}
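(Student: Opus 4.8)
The plan is to unwind the definition of ``Hartogs w.r.t.\ a pair'' in both settings and then use the fiber-product description of the global sections functor stated just above. First I would fix $f \in \Gamma(X \setminus K, \mathcal{F})$, and note that since $(K,X)$ is a Hartogs pair (this is guaranteed by Remark \ref{mainlemma2}, so the statement makes sense) and $W \setminus K = (X \setminus K) \cap W$, restriction gives an element $f|_{W \setminus K} \in \Gamma(W \setminus K, \mathcal{F}|_W)$. By the hypothesis that $\mathcal{F}|_W$ is Hartogs w.r.t.\ $(K,W)$, there is $g \in \Gamma(W, \mathcal{F}|_W) = \Gamma(W, \mathcal{F})$ with $g|_{W\setminus K} = f|_{W \setminus K}$.

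Next I would assemble a global section out of $f$ and $g$. The key observation is the natural isomorphism of functors
\begin{gather*}
\Gamma(X,-) = \Gamma_{X\setminus K}(X,-)\times_{\Gamma_{W\setminus K}(X,-)}\Gamma_{W}(X,-),
\end{gather*}
which concretely says: a section in $\Gamma(X,\mathcal{F})$ is the same datum as a pair consisting of a section over $X \setminus K$ and a section over $W$ that agree on the overlap $W \setminus K$. Since $X = (X \setminus K) \cup W$ (because $K \subset W$), this is just the sheaf gluing axiom. The pair $(f, g)$ satisfies exactly the compatibility condition $f|_{W\setminus K} = g|_{W \setminus K}$ by construction, so it determines a unique $\tilde f \in \Gamma(X, \mathcal{F})$ with $\tilde f|_{X \setminus K} = f$ and $\tilde f|_W = g$.

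Finally I would conclude: the section $\tilde f$ restricts to $f$ on $X \setminus K$, which is precisely what it means for the canonical morphism $\Gamma(X,\mathcal{F}) \to \Gamma(X\setminus K,\mathcal{F})$ to hit $f$. Since $f$ was arbitrary, this morphism is an epimorphism, i.e.\ $\mathcal{F}$ is Hartogs w.r.t.\ $(K,X)$.

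I do not expect any real obstacle here; the lemma is essentially a restatement of the sheaf gluing axiom packaged through the fiber-product formula for $\Gamma(X,-)$, and the only thing to be careful about is the bookkeeping that $K \subset W$ forces $(X\setminus K) \cup W = X$ and that $(K,X)$ is indeed a Hartogs pair so that the target statement is non-vacuous. If one prefers a more formal argument one can instead invoke part (3) of Lemma \ref{mainlemma-1} after checking the two conditions on maps to $\Gamma(\partial X, \mathcal{F})$, but the direct gluing argument is shorter and self-contained.
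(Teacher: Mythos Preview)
Your proof is correct and follows exactly the approach the paper intends: the paper states the fiber-product identification $\Gamma(X,-)=\Gamma_{X\setminus K}(X,-)\times_{\Gamma_{W\setminus K}(X,-)}\Gamma_{W}(X,-)$ and then presents this lemma as an immediate consequence without further argument. Your write-up simply unpacks that implication via the sheaf gluing axiom, which is precisely what the fiber-product formula encodes.
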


With additional conditions, we obtain a converse statement to Lemma \ref{corth5}. First, for the sheaf $\mathcal{F}\in Sh_{\mathbb{C}}(X)$, we define the irregularity of $\mathcal{F}$ as $\sigma_{1}(X,\mathcal{F}):=\dim_{\mathbb{C}}H^{1}(X,\mathcal{F}).$ Let us define the irregularity of X as the number $\sigma_{1}(X,\mathcal{O}_{X})$.

\begin{example}\noindent
\begin{enumerate} 
\item $\sigma_{1}(X, \mathcal{O}_{X})=0$ for projective spaces $\mathbb{CP}^{n}$, flag varieties (generally, for spherical varieties \cite[Corollaire 1]{Brion1}).
\item $\sigma_{1}(X, \mathcal{O}_{X})\neq 0$ for complex tori (in particular, for abelian varieties). If $X=\mathbb{C}^{n}/\Lambda$, then $\sigma_{1}(X, \mathcal{O}_{X})=n$.
\item If $X=\mathbb{C}\mathbb{P}^{1}, \mathcal{F}=\mathcal{O}_{\mathbb{CP}^{1}}(n)$, then $\sigma_{1}(X, \mathcal{F})=\begin{cases}
0, & \text{if $n\geq -1$;} \\
-n-1, & \text{if $n<-1$.}
\end{cases}$. Actually, for the canonical sheaf $K_{\mathbb{C}\mathbb{P}^{1}}$ we have $K_{\mathbb{CP}^{1}}\cong \mathcal{O}_{\mathbb{CP}^{1}}(-2)$. The Serre duality implies that $\sigma_{1}(X, \mathcal{O}_{\mathbb{CP}^{1}}(n))=\dim_{\mathbb{C}}(H^{0}(\mathbb{CP}^{1}, \mathcal{O}_{\mathbb{CP}^{1}}(-n-2))).$ 
\end{enumerate}
\end{example}

\begin{lemma}\label{corth4}
Let $X$ be a CLCH-space, $\mathcal{F}\in Sh_{\mathbb{C}}(X)$, $W\subset X$ be a domain, and $(K, W)$ be a Hartogs pair. Suppose there exists a CLCH-space $X'$, a sheaf $\mathcal{F}'\in Sh_{\mathbb{C}}(X')$ with $\sigma_{1}(X',\mathcal{F}')=0$, and an open embedding $i\colon X\hookrightarrow X'$ such that $\mathcal{F}=i^{-1}\mathcal{F}'$. If $\mathcal{F}$ is Hartogs w.r.t. $(K,X)$, then $\mathcal{F}|_{W}$ is Hartogs w.r.t. $(K,W)$.
\end{lemma}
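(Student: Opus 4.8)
The plan is to reformulate the Hartogs property as the vanishing of a connecting homomorphism in the long exact sequence of local cohomology with support in $K$, propagate that vanishing to the ambient space $X'$ (where it forces $H^{1}_{K}(X',\mathcal{F}')=0$ because $H^{1}(X',\mathcal{F}')=0$), and then bring it back down to $W$ by excision. Concretely, for every open set $U$ with $K\subset U\subset X'$, applying $\mathbf{R}\Gamma(U,-)$ to the triangle (\ref{eq2}) for $\mathcal{F}'|_{U}$ gives the exact sequence
\begin{gather*}
0\to H^{0}_{K}(U,\mathcal{F}'|_{U})\to \Gamma(U,\mathcal{F}'|_{U})\to \Gamma(U\setminus K,\mathcal{F}'|_{U})\xrightarrow{\ \delta_{U}\ } H^{1}_{K}(U,\mathcal{F}'|_{U})\to H^{1}(U,\mathcal{F}'|_{U})\to\cdots .
\end{gather*}
Since $X'$ is Hausdorff, $K$ is closed in $X'$, so excision makes the restriction morphisms $H^{i}_{K}(X',\mathcal{F}')\to H^{i}_{K}(X,\mathcal{F})\to H^{i}_{K}(W,\mathcal{F}|_{W})$ isomorphisms; moreover the triangle (\ref{eq2}) is functorial for the open immersions $W\hookrightarrow X\hookrightarrow X'$, so that, up to the excision isomorphism $H^{1}_{K}(X',\mathcal{F}')\cong H^{1}_{K}(X,\mathcal{F})$, we have $\delta_{X'}=\delta_{X}\circ\rho$, where $\rho\colon\Gamma(X'\setminus K,\mathcal{F}')\to\Gamma(X\setminus K,\mathcal{F})$ is the restriction map.

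Then I would use the hypotheses. That $\mathcal{F}$ is Hartogs w.r.t. $(K,X)$ means $\Gamma(X,\mathcal{F})\to\Gamma(X\setminus K,\mathcal{F})$ is onto, which by exactness of the sequence for $U=X$ is equivalent to $\delta_{X}=0$; by the compatibility above, $\delta_{X'}=0$ as well. On the other hand $\sigma_{1}(X',\mathcal{F}')=0$ says $H^{1}(X',\mathcal{F}')=0$, so exactness of the sequence for $U=X'$ at $H^{1}_{K}(X',\mathcal{F}')$ forces $\delta_{X'}$ to be surjective onto $H^{1}_{K}(X',\mathcal{F}')$. Hence $H^{1}_{K}(X',\mathcal{F}')=0$, and by excision $H^{1}_{K}(W,\mathcal{F}|_{W})=0$. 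Feeding this into the sequence for $U=W$ shows that $\Gamma(W,\mathcal{F})\to\Gamma(W\setminus K,\mathcal{F})$ is surjective; since $(K,W)$ is assumed to be a Hartogs pair, this is precisely the assertion that $\mathcal{F}|_{W}$ is Hartogs w.r.t. $(K,W)$.

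The only delicate point — the part I would be most careful about — is the bookkeeping around excision and functoriality: one must check that $K$, being compact in the Hausdorff space $X'$, is closed (so that $\mathbf{R}\Gamma_{K}(X',\mathcal{F}')\xrightarrow{\sim}\mathbf{R}\Gamma_{K}(U,\mathcal{F}'|_{U})$ for every open $U\supset K$), and that the connecting homomorphisms of the three long exact sequences are genuinely intertwined by the restriction maps $\Gamma(X'\setminus K,\mathcal{F}')\to\Gamma(X\setminus K,\mathcal{F})\to\Gamma(W\setminus K,\mathcal{F}|_{W})$. Both are standard consequences of the functoriality of the distinguished triangle (\ref{eq2}) under open immersions and are verified by the diagram chase already alluded to in the introduction; no further geometric input (normality, coherence, (non)compactness, connectedness of $X\setminus K$) is needed for this lemma.
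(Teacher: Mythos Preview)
Your proof is correct and follows essentially the same approach as the paper: the paper simply displays the commutative diagram of the three local-cohomology long exact sequences for $X'$, $X$, $W$ (with the excision isomorphisms on $H^{1}_{K}$ and the vanishing $H^{1}(X',\mathcal{F}')=0$ making the top connecting map surjective) and leaves the diagram chase to the reader, whereas you make the intermediate conclusion $H^{1}_{K}(X',\mathcal{F}')=0$ explicit before descending to $W$. The content is identical.
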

 
\begin{proof}
The proof follows from the following commutative diagram: 

\[\begin{diagram} 
\node{\Gamma(X',\mathcal{F}')}\arrow{s,r}{}\arrow{e,r}{} \node{\Gamma(X'\setminus K,\mathcal{F}')} \arrow{s,r}{} \arrow{e,r}{}\node{H^{1}_{K}(X',\mathcal{F}')} \arrow{s,r}{\cong} \arrow{e,r}{}\node{0}
\\
\node{\Gamma(X,\mathcal{F})} \arrow{e,t}{} \arrow{s,r}{}
\node{\Gamma(X\setminus K,\mathcal{F})} \arrow{s,r}{} \arrow{e,r}{}\node{H^{1}_{K}(X,\mathcal{F})} \arrow{s,r}{\cong}
\\
\node{\Gamma(W,\mathcal{F})} \arrow{e,t}{} 
\node{\Gamma(W\setminus K,\mathcal{F})} \arrow{e,r}{}\node{H^{1}_{K}(W,\mathcal{F})} 
\end{diagram}\]
\end{proof} 

In the case of the structure sheaf of a noncompact normal complex analytic variety, we obtain the following results. 

\begin{proposition}\label{prop}
Let $X$ be a noncompact normal complex analytic variety, $W\subset X$ be a domain, and $(K, W)$ be a Hartogs pair. Suppose there exists a CLCH-space $X'$, a sheaf $\mathcal{F}'\in Sh_{\mathbb{C}}(X')$ with $\sigma_{1}(X',\mathcal{F}')<\infty$, and an open embedding $i\colon X\hookrightarrow X'$ such that $\mathcal{O}_{X}=i^{-1}\mathcal{F}'$. If $\mathcal{O}_{X}$ is Hartogs w.r.t. $(K,X)$, then $\mathcal{O}_{X}|_{W}$ is Hartogs  w.r.t. $(K,W)$.
\end{proposition}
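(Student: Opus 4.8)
The plan is to transfer the statement to the domain $W$ itself and then close with the Andreotti-Hill trick. The two hypotheses --- that $\mathcal{O}_{X}$ is Hartogs w.r.t. $(K,X)$, and that $\sigma_{1}(X',\mathcal{F}')<\infty$ --- will be used only to produce the estimate $\dim_{\mathbb{C}}H^{1}_{K}(W,\mathcal{O}_{X})<\infty$; once this is in hand, Remark~\ref{remarkoncoher1}(3) applies verbatim with $W$ in the role of $X$ and yields the conclusion.

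First I would observe that $W$ is again a noncompact normal complex analytic variety: normality, irreducibility and countability at infinity are inherited by open subsets, and $W$ cannot be compact, since a nonempty open compact subset of the connected space $X$ would have to be all of $X$, contradicting the noncompactness of $X$. Thus $(K,W)$ is a Hartogs pair of a noncompact normal complex analytic variety, and by Remark~\ref{remarkoncoher1}(3) it suffices to show $\dim_{\mathbb{C}}H^{1}_{K}(W,\mathcal{O}_{X})<\infty$.

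To get this bound I would use excision for cohomology with supports in the compact set $K$ (see \cite[Chapter II]{Shapira}): since $K\subset W\subset X\subset X'$ are open inclusions, the canonical maps are isomorphisms $H^{q}_{K}(X',\mathcal{F}')\cong H^{q}_{K}(X,\mathcal{O}_{X})\cong H^{q}_{K}(W,\mathcal{O}_{X})$ for all $q$, compatibly with restriction of sections and with the long exact sequences $\cdots\to H^{q}_{K}(\,\cdot\,)\to H^{q}(\,\cdot\,)\to H^{q}(\,\cdot\setminus K\,)\to H^{q+1}_{K}(\,\cdot\,)\to\cdots$. By exactness of the long exact sequence of $X$ at the term $\Gamma(X\setminus K,\mathcal{O}_{X})$, the hypothesis that $\mathcal{O}_{X}$ is Hartogs w.r.t. $(K,X)$ is equivalent to the vanishing of the connecting map $c_{X}\colon\Gamma(X\setminus K,\mathcal{O}_{X})\to H^{1}_{K}(X,\mathcal{O}_{X})$. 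Applying the compatibility above to the restriction $\Gamma(X'\setminus K,\mathcal{F}')\to\Gamma(X\setminus K,\mathcal{O}_{X})$ together with the excision isomorphism $H^{1}_{K}(X',\mathcal{F}')\cong H^{1}_{K}(X,\mathcal{O}_{X})$, we get that the connecting map $c_{X'}\colon\Gamma(X'\setminus K,\mathcal{F}')\to H^{1}_{K}(X',\mathcal{F}')$ also vanishes. Exactness of the long exact sequence of $X'$ at $H^{1}_{K}(X',\mathcal{F}')$ then shows that $H^{1}_{K}(X',\mathcal{F}')\hookrightarrow H^{1}(X',\mathcal{F}')$, whence $\dim_{\mathbb{C}}H^{1}_{K}(W,\mathcal{O}_{X})=\dim_{\mathbb{C}}H^{1}_{K}(X',\mathcal{F}')\le\sigma_{1}(X',\mathcal{F}')<\infty$, as required.

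The step I expect to need the most care is the last one in the preceding paragraph: one must make sure that excision and the restriction homomorphisms genuinely interlock with both long exact sequences, so that the vanishing of $c_{X}$ really does propagate to the vanishing of $c_{X'}$; everything else there is formal. All of the analytic content --- a nonconstant holomorphic function on $W\setminus K$, the $\Gamma(W,\mathcal{O}_{W})$-module structure on $H^{1}_{K}(W,\mathcal{O}_{X})$, and an appeal to the Riemann extension theorem on the normal variety $W$ --- is already packaged inside Remark~\ref{remarkoncoher1}(3), so here it is enough to supply the finiteness bound just obtained.
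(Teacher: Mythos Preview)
Your proof is correct and follows essentially the same route as the paper's: both use the excision isomorphisms $H^{1}_{K}(X',\mathcal{F}')\cong H^{1}_{K}(X,\mathcal{O}_{X})\cong H^{1}_{K}(W,\mathcal{O}_{X})$ together with the vanishing of the connecting map for $X$ to get $H^{1}_{K}(X',\mathcal{F}')\hookrightarrow H^{1}(X',\mathcal{F}')$, hence $\dim_{\mathbb{C}}H^{1}_{K}(W,\mathcal{O}_{X})<\infty$, and then finish with the Andreotti--Hill trick on the bottom row (your appeal to Remark~\ref{remarkoncoher1}(3) is exactly what the paper means by ``the Andreotti--Hill trick applying to the last line of this diagram''). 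The compatibility you flag as delicate is just the functoriality of the long exact sequence of cohomology with supports under the open restriction $X'\setminus K\to X\setminus K$, so there is no hidden difficulty there.
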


\begin{proof}
The proof follows from Lemma \ref{corth5}, from the following commutative diagram 

\[
\begin{diagram} 
\node{\Gamma(X', \mathcal{F}')} \arrow{e,t}{} \arrow{s,r}{}
\node{\Gamma(X'\setminus K, \mathcal{F}')} \arrow{e,t}{} \arrow{s,r}{}
\node{H^{1}_{K}(X',\mathcal{F}')} \arrow{e,t}{} \arrow{s,r}{\cong} 
\node{H^{1}(X',\mathcal{F}')}
\\
\node{\Gamma(X, \mathcal{O}_{X})} \arrow{e,t}{} \arrow{s,r}{}
\node{\Gamma(X\setminus K, \mathcal{O}_{X})} \arrow{e,t}{} \arrow{s,r}{}
\node{H^{1}_{K}(X,\mathcal{O}_{X})} \arrow{s,r}{\cong} 
\\
\node{\Gamma(W, \mathcal{O}_{X})} \arrow{e,t}{} 
\node{\Gamma(W\setminus K, \mathcal{O}_{X})} \arrow{e,t}{} 
\node{H^{1}_{K}(W,\mathcal{O}_{X})} 
\end{diagram}
\]
and from the Andreotti-Hill trick applying to the last line of this diagram.
\end{proof}

If a CLCH-space has only one topological end, then we have the following statement. 

\begin{lemma}\label{mainprop'}
Let $X$ be a noncompact CLCH-space which has only one topological end, and let $\mathcal{F}\in Sh_{\mathbb{C}}(X)$ be a sheaf of $\mathbb{C}$-vector spaces. Suppose there exists a CLCH-space $X'$, a sheaf $\mathcal{F}'\in Sh_{\mathbb{C}}(X')$ with $\sigma_{1}(X',\mathcal{F}')<\infty$, and an open embedding $i\colon X\hookrightarrow X'$ such that $\mathcal{F}=i^{-1}\mathcal{F}'$. If $\mathcal{F}$ is Hartogs, then $\dim_{\mathbb{C}} H^{1}_{c}(X,\mathcal{F})\leq \sigma_{1}(X',\mathcal{F}') $. 
\end{lemma}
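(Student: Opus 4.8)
The plan is to bound $H^{1}_{c}(X,\mathcal F)$ by transporting the forget-supports map on $X$ to the one on $X'$ through the open embedding $i$. First I would record what the hypotheses give at the level of $H^{0}$: since $X$ has only one topological end, for every compact $K\subset X$ the pair $(\mu(K),X)$ is a Hartogs pair (as recalled after Definition \ref{defHart}), so each $\mu(K)$ is compact, and since $K\subseteq\mu(K)$ the family $\{\mu(K)\}_{K}$ is cofinal in the directed set of compact subsets of $X$. Because $\mathcal F$ is Hartogs, each restriction $\Gamma(X,\mathcal F)\to\Gamma(X\setminus\mu(K),\mathcal F)$ is surjective, and a surjection survives the filtered colimit $\Gamma(\partial X,\mathcal F)=\varinjlim_{K}\Gamma(X\setminus\mu(K),\mathcal F)$; hence $\Gamma(X,\mathcal F)\to\Gamma(\partial X,\mathcal F)$ is surjective. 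Feeding this into the long exact sequence of the distinguished triangle (\ref{eq3}),
\begin{gather*}
\cdots\to\Gamma(X,\mathcal F)\to\Gamma(\partial X,\mathcal F)\to H^{1}_{c}(X,\mathcal F)\to H^{1}(X,\mathcal F)\to\cdots ,
\end{gather*}
I conclude that the forget-supports map $u\colon H^{1}_{c}(X,\mathcal F)\to H^{1}(X,\mathcal F)$ is injective.

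Next I would use $i\colon X\hookrightarrow X'$ to factor $u$ through $H^{1}(X',\mathcal F')$. Write $Z=X'\setminus X$ with closed immersion $k\colon Z\hookrightarrow X'$, and let $\phi\colon i_{!}\mathcal F\to\mathcal F'$ be the canonical morphism (the counit $i_{!}i^{-1}\mathcal F'\to\mathcal F'$ of the adjunction $(i_{!},i^{-1})$), sitting in the short exact sequence $0\to i_{!}\mathcal F\to\mathcal F'\to k_{*}(\mathcal F'|_{Z})\to 0$. Since $i$ is an open immersion and $i_{!}$ is exact preserving c-soft sheaves, $\mathbf R\Gamma_{c}(X,\mathcal F)\cong\mathbf R\Gamma_{c}(X',i_{!}\mathcal F)$, compatibly both with the forget-supports natural transformation and with restriction to the open set $X$; so under this identification $u$ is the composite $H^{1}_{c}(X',i_{!}\mathcal F)\to H^{1}(X',i_{!}\mathcal F)\to H^{1}(X,\mathcal F)$, where the first arrow forgets supports on $X'$ and the second is restriction. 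Now $i^{-1}\phi\colon i^{-1}i_{!}\mathcal F=\mathcal F\to i^{-1}\mathcal F'=\mathcal F$ is the identity, so by naturality of restriction the map $H^{1}(X',i_{!}\mathcal F)\to H^{1}(X,\mathcal F)$ equals $H^{1}(X',\phi)$ followed by the restriction $H^{1}(X',\mathcal F')\to H^{1}(X,\mathcal F)$. Hence $u$ factors as
\begin{gather*}
H^{1}_{c}(X,\mathcal F)\ \xrightarrow{\ \psi\ }\ H^{1}(X',\mathcal F')\ \longrightarrow\ H^{1}(X,\mathcal F).
\end{gather*}
Since $u$ is injective, $\psi$ is injective, whence $\dim_{\mathbb C}H^{1}_{c}(X,\mathcal F)\le\dim_{\mathbb C}H^{1}(X',\mathcal F')=\sigma_{1}(X',\mathcal F')$.

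The part I expect to require the most care — though it is a formal diagram chase rather than any analysis — is the compatibility of the isomorphism $\mathbf R\Gamma_{c}(X,\mathcal F)\cong\mathbf R\Gamma_{c}(X',i_{!}\mathcal F)$ with the forget-supports maps and with restriction to $X$, together with the identity $i^{-1}\phi=\mathrm{id}$; these are standard consequences of the adjunctions $(i_{!},i^{-1},i_{*})$ and the behaviour of $\mathbf R\Gamma_{c}$ along open immersions (see \cite{Shapira}). A secondary point to double-check is that the colimit argument for the surjectivity of $\Gamma(X,\mathcal F)\to\Gamma(\partial X,\mathcal F)$ really only invokes the cofinal family $\{\mu(K)\}_{K}$, which is exactly where one-endedness and the Hartogs hypothesis are consumed.
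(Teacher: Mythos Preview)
Your proof is correct and rests on the same idea as the paper's: the Hartogs hypothesis together with one-endedness forces an injection of $H^{1}_{c}(X,\mathcal{F})$ into $H^{1}(X',\mathcal{F}')$. The organization differs slightly. Instead of first proving $H^{1}_{c}(X,\mathcal{F})\hookrightarrow H^{1}(X,\mathcal{F})$ via the triangle~(\ref{eq3}) and then factoring through $H^{1}(X',\mathcal{F}')$ by means of $i_{!}$ and the counit $\phi$, the paper works one compact set at a time: for each Hartogs pair $(K,X)$ it uses the excision isomorphism $H^{1}_{K}(X,\mathcal{F})\cong H^{1}_{K}(X',\mathcal{F}')$ together with the commutative ladder of long exact sequences for $(K\subset X)$ and $(K\subset X')$ to conclude that $H^{1}_{K}(X,\mathcal{F})\hookrightarrow H^{1}(X',\mathcal{F}')$, and then passes to the filtered colimit over such $K$. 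That route needs only excision and so avoids the compatibility check you single out at the end, but both verifications are standard; your packaging through $i_{!}$ is a perfectly legitimate alternative.
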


\begin{proof}
The proof follows from the following commutative diagram:
\[
\begin{diagram} 
\node{\Gamma(X',\mathcal{F}')}\arrow{s,r}{}\arrow{e,r}{} \node{\Gamma(X'\setminus K,\mathcal{F}')} \arrow{s,r}{} \arrow{e,r}{}\node{H^{1}_{K}(X',\mathcal{F}')} \arrow{s,r}{\cong} \arrow{e,r}{}\node{H^{1}(X',\mathcal{F}')}
\\
\node{\Gamma(X,\mathcal{F})} \arrow{e,t}{}
\node{\Gamma(X\setminus K,\mathcal{F})} \arrow{e,r}{}\node{H^{1}_{K}(X,\mathcal{F})}
\end{diagram}
\]
and from the canonical isomorphism $H^{1}_{c}(X,\mathcal{F})\cong\varinjlim\limits_{K} H^{1}_{K}(X,\mathcal{F})$ (here the colimit taking over compact sets $K$ such that $(K,X)$ is a Hartogs pair).
\end{proof}

Lemma \ref{mainlemma1}, \ref{corth4}, \ref{mainprop'} implies the following necessary and sufficient condition of the Hartogs phenomenon. 

\begin{theorem}\label{maintheorem1}
Let $X$ be a noncompact complex analytic variety which has only one topological end, and let $\mathcal{F}\in Vect(\mathcal{O}_{X})$ be a locally free $\mathcal{O}_{X}$-module of finite rank. Suppose there exists a CLCH-space $X'$, a sheaf $\mathcal{F}'\in Sh_{\mathbb{C}}(X')$ with $\sigma_{1}(X',\mathcal{F}')=0$, and an open embedding $i\colon X\hookrightarrow X'$ such that $\mathcal{F}=i^{-1}\mathcal{F}'$. Then the following conditions are equivalent:
\begin{enumerate}

\item $H^{1}_{c}(X,\mathcal{F})=0$

\item The sheaf $\mathcal{F}$ is Hartogs. 

\item The sheaf $\mathcal{F}|_{W}$ is Hartogs for any domain $W\subset X$.

\end{enumerate} 

\end{theorem}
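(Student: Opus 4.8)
The plan is to close the cycle of implications $(1)\Rightarrow(2)\Rightarrow(3)\Rightarrow(2)\Rightarrow(1)$, every arrow being a direct application of a lemma already established in this section; the whole content of the proof is checking that the right hypothesis is available where each lemma is invoked. Note first that $X$, being (by our convention) an irreducible complex analytic variety, is connected, so $X$ is itself a domain in $X$.

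For $(1)\Rightarrow(2)$ I would simply invoke Lemma \ref{mainlemma1}: since $\mathcal{F}$ is locally free of finite rank over the noncompact complex analytic variety $X$ and $H^{1}_{c}(X,\mathcal{F})=0$, the sheaf $\mathcal{F}$ is Hartogs (and one even recovers that $X$ has a single topological end, consistent with the standing hypothesis). For $(2)\Rightarrow(3)$, fix an arbitrary domain $W\subset X$ and an arbitrary Hartogs pair $(K,W)$. By Remark \ref{mainlemma2}, $(K,X)$ is then a Hartogs pair, and $K$ is compact in $X$; since $\mathcal{F}$ is Hartogs, it is in particular Hartogs w.r.t. $(K,X)$. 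Because $\sigma_{1}(X',\mathcal{F}')=0$ and $\mathcal{F}=i^{-1}\mathcal{F}'$, Lemma \ref{corth4} applies and yields that $\mathcal{F}|_{W}$ is Hartogs w.r.t. $(K,W)$. As $(K,W)$ ranges over all Hartogs pairs of $W$, this is exactly the statement that $\mathcal{F}|_{W}$ is Hartogs, and $W$ was arbitrary. The implication $(3)\Rightarrow(2)$ is then immediate by specializing to $W=X$; alternatively one may deduce it from Lemma \ref{corth5} applied, for a given Hartogs pair $(K,X)$, to any relatively compact domain $W$ with $K\subset W$ and $(K,W)$ a Hartogs pair.

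Finally, for $(2)\Rightarrow(1)$ I would use that $X$ has only one topological end together with $\sigma_{1}(X',\mathcal{F}')=0<\infty$: Lemma \ref{mainprop'} gives $\dim_{\mathbb{C}}H^{1}_{c}(X,\mathcal{F})\le \sigma_{1}(X',\mathcal{F}')=0$, hence $H^{1}_{c}(X,\mathcal{F})=0$, which closes the cycle. I do not anticipate a genuine obstacle; the only point that deserves attention is bookkeeping of the numerical hypothesis, namely that $\sigma_{1}(X',\mathcal{F}')=0$ (not merely finite) is precisely what Lemma \ref{corth4} needs for the diagram chase used in $(2)\Rightarrow(3)$, whereas Lemma \ref{mainprop'} in $(2)\Rightarrow(1)$ only requires finiteness and is simply fed the value $0$. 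The single-end assumption on $X$ is used (beyond being part of the statement) exactly once, in the appeal to Lemma \ref{mainprop'}.
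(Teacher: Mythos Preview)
Your proof is correct and follows exactly the route the paper takes: the implications $(1)\Rightarrow(2)$, $(2)\Rightarrow(3)$, and $(2)\Rightarrow(1)$ are obtained from Lemmas~\ref{mainlemma1}, \ref{corth4}, and \ref{mainprop'} respectively, with $(3)\Rightarrow(2)$ trivial by taking $W=X$. Your write-up is in fact more detailed than the paper's one-line citation of these three lemmas, and your bookkeeping of where $\sigma_{1}(X',\mathcal{F}')=0$ (versus merely finite) and the single-end hypothesis are actually used is accurate.
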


\begin{example}\label{examp}\noindent
\begin{enumerate}
\item Let $X$ be a Stein variety, $\dim X\geq 2$, $\mathcal{F}\in Vect(\mathcal{O}_{X})$. By the Cartan theorem, it follows that $H^{1}(X,\mathcal{F})=0$. By \cite[Chapter VII, Section D, Theorem 2]{Ganning} or \cite[Chapter I, Corollary 4.10]{BanStan}, it follows that $X$ has only one topological end. Hence we may take $X'=X,\mathcal{F}'=\mathcal{F}$. It follows that $H^{1}_{c}(X,\mathcal{F})=0$ if and only if the sheaf $\mathcal{F}$ is Hartogs. Note that the implication "$\Leftarrow$" is also following from $\operatorname{codh}(\mathcal{F})=\dim X\geq 2$ \cite[Chapter 1, Theorem 3.6, Corollary 4.2]{BanStan}. 
\item Let $X$ be a domain with connected boundary of a complex manifold $X'$. Assume that $H^{1}(X',\mathcal{O}_{X'})=0$. It follows that $H^{1}_{c}(X,\mathcal{O}_{X})=0$ if and only if $\mathcal{O}_X$ is Hartogs. 
\item Let $X'$ be a compact complex manifold and $X:=X'\setminus\{pt\}$, where $pt$ is a point of $X'$. Let $\mathcal{F}$ be a locally free $\mathcal{O}_{X'}$-module of finite rank. Since $\dim_{\mathbb{C}} H^{1}_{c}(X,\mathcal{F})=\infty$, we obtain the sheaf $\mathcal{F}|_{X}$ is not Hartogs. Actually, we have the following exact sequence:
\begin{equation*}
	\xymatrix@C=0.5cm{
0 \ar[r] &  \Gamma(X', \mathcal{F}) \ar[rr] && \mathcal{F}_{pt} \ar[rr] && H^{1}_{c}(X,\mathcal{F}) \ar[r] & \cdots }
\end{equation*}
We see that $\dim_{\mathbb{C}} \Gamma(X', \mathcal{F})<\infty$ but $\dim_{\mathbb{C}} \mathcal{F}_{pt}=\infty$. 
\end{enumerate}
\end{example}

In the case of the structure sheaf of a noncompact normal complex analytic variety, we have more general results. 

\begin{theorem}\label{th1'}
Let $X$ be a noncompact normal complex analytic variety that has only one topological end. Suppose there exists a CLCH-space $X'$, a sheaf $\mathcal{F}'\in Sh_{\mathbb{C}}(X')$ with $\sigma_{1}(X',\mathcal{F}')<\infty$, and an open embedding $i\colon X\hookrightarrow X'$ such that $\mathcal{O}_{X}=i^{-1}\mathcal{F}'$. Then the following conditions are equivalent:
\begin{enumerate}
\item The sheaf $\mathcal{O}_{X}$ is Hartogs. 
\item The sheaf $\mathcal{O}_{X}|_{W}$ is Hartogs for any domain $W\subset X$.
\item $\dim_{\mathbb{C}} H^{1}_{c}(X,\mathcal{O}_{X})<\infty$.
\item $\dim_{\mathbb{C}} H^{1}_{c}(X,\mathcal{O}_{X})\leq \sigma(X',\mathcal{F}')$. 
\end{enumerate} 
\end{theorem}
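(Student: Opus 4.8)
The plan is to prove Theorem \ref{th1'} by establishing the cycle of implications $(1)\Rightarrow(2)\Rightarrow(3)\Rightarrow(4)\Rightarrow(1)$, assembling the pieces already proved in this section. The implication $(1)\Rightarrow(2)$ is exactly Proposition \ref{prop}: given the open embedding $i\colon X\hookrightarrow X'$ with $\sigma_{1}(X',\mathcal{F}')<\infty$ and $\mathcal{O}_{X}=i^{-1}\mathcal{F}'$, if $\mathcal{O}_{X}$ is Hartogs (so Hartogs w.r.t. every Hartogs pair $(K,X)$), then for any domain $W\subset X$ and any Hartogs pair $(K,W)$ — which is also a Hartogs pair $(K,X)$ by Remark \ref{mainlemma2} — Proposition \ref{prop} gives that $\mathcal{O}_{X}|_{W}$ is Hartogs w.r.t. $(K,W)$; since $W$ ranges over all domains this is statement (2).

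Next, $(2)\Rightarrow(3)$: taking $W=X$ in (2) already gives that $\mathcal{O}_{X}$ is Hartogs, so in particular $\mathcal{O}_{X}$ is Hartogs as a sheaf on $X$, and then Lemma \ref{mainprop'} applies directly — since $X$ has only one topological end and $\sigma_{1}(X',\mathcal{F}')<\infty$ — to yield $\dim_{\mathbb{C}}H^{1}_{c}(X,\mathcal{O}_{X})\leq \sigma_{1}(X',\mathcal{F}')<\infty$. This simultaneously proves $(2)\Rightarrow(4)$, hence also $(2)\Rightarrow(3)$ since (4) trivially implies (3). Then $(3)\Rightarrow(1)$ is Proposition \ref{lemmonholom}: $X$ is a noncompact normal complex analytic variety with only one topological end, so $\dim_{\mathbb{C}}H^{1}_{c}(X,\mathcal{O}_{X})<\infty$ forces $\Gamma(X,\mathcal{O}_{X})\to\Gamma(\partial X,\mathcal{O}_{X})$ to be an isomorphism and hence $\mathcal{O}_{X}$ to be Hartogs. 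Finally $(4)\Rightarrow(3)$ is immediate, and combining the above closes every loop: $(1)\Leftrightarrow(3)$ via Propositions \ref{prop} (partially) and \ref{lemmonholom}, while (2) and (4) are sandwiched between them.

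Concretely, I would organize the write-up as: $(1)\Rightarrow(2)$ by Proposition \ref{prop} together with Remark \ref{mainlemma2} and Lemma \ref{corth5} (to go back from $W$ to $X$ is not even needed here, only the forward direction); $(2)\Rightarrow(4)$ by specializing to $W=X$ and invoking Lemma \ref{mainprop'}; $(4)\Rightarrow(3)$ trivially; and $(3)\Rightarrow(1)$ by Proposition \ref{lemmonholom}. One should double-check that the hypotheses of Lemma \ref{mainprop'} are met — it requires $\mathcal{F}=i^{-1}\mathcal{F}'$, which holds with $\mathcal{F}=\mathcal{O}_{X}$, and it requires $X$ to have only one topological end, which is assumed — and that Proposition \ref{lemmonholom} does not secretly need more than one end (it does not; it only needs normality, noncompactness, one end, and finiteness of $H^{1}_{c}$).

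The main obstacle, such as it is, is bookkeeping rather than mathematics: making sure the quantifier structure of "Hartogs" (over every Hartogs pair) is respected when passing between $X$ and its domains $W$, and confirming that the diagram in Proposition \ref{prop} together with the Andreotti–Hill trick indeed yields the Hartogs property w.r.t. the specific pair $(K,W)$ for every such pair, not merely for large $K$. Since all the substantive analytic input (the Andreotti–Hill trick, the Riemann extension theorem, the identity theorem, the colimit formula $H^{1}_{c}=\varinjlim_{K}H^{1}_{K}$) has already been packaged into Propositions \ref{lemmonholom}, \ref{prop} and Lemmas \ref{mainlemma1}, \ref{mainprop'}, the proof of Theorem \ref{th1'} itself is essentially a short citation of these four results arranged in a cycle.
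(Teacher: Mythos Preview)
Your proposal is correct and follows essentially the same approach as the paper: both proofs cite Proposition~\ref{prop} for $(1)\Rightarrow(2)$, Lemma~\ref{mainprop'} for the passage from Hartogs to the bound $\dim_{\mathbb{C}}H^{1}_{c}(X,\mathcal{O}_{X})\leq\sigma_{1}(X',\mathcal{F}')$, and Proposition~\ref{lemmonholom} for $(3)\Rightarrow(1)$, with the remaining implications $(2)\Rightarrow(1)$ (take $W=X$) and $(4)\Rightarrow(3)$ being trivial. The only cosmetic difference is that you arrange the implications as a single cycle $(1)\Rightarrow(2)\Rightarrow(4)\Rightarrow(3)\Rightarrow(1)$, whereas the paper lists them separately as $1\Rightarrow 2$, $1\Rightarrow 4$, $3\Rightarrow 1$, plus the two obvious ones.
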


\begin{proof}

The implications $2 \Rightarrow 1,  4\Rightarrow 3$ are clear. The implication $1 \Rightarrow 2$ follows from Proposition \ref{prop}. The implication $1 \Rightarrow 4$ follows from Lemma \ref{mainprop'}. The implication $3 \Rightarrow 1$ follows from Proposition \ref{lemmonholom}. 
\end{proof}

\begin{remark}
Let $X$ be a complex analytic variety, $\mathcal{F}\in Vect(\mathcal{O}_{X})$. Assume that $\dim_{\mathbb{C}} H^{1}_{c}(X,\mathcal{F})=\infty$. If $X$ has at least two topological ends, then $\mathcal{F}$ may or may not be a Hartogs sheaf. For instance, consider open submanifolds of $\mathbb{CP}^{n}$, $(n\geq 2)$, and let $\mathcal{F}\in Vect(\mathcal{O}_{\mathbb{CP}^{n}})$. In the following cases, we have $\dim_{\mathbb{C}} H^{1}_{c}(X,\mathcal{F})=\infty$:
\begin{enumerate}
\item Consider $X=\mathbb{CP}^{n}\setminus \{x,y\}$, where $x, y$ are distinct points of $\mathbb{CP}^{n}$. Then the sheaf $\mathcal{F}|_{X}$ is not Hartogs because there exists a compact set $K\subset \mathbb{CP}^{n}$ such that $y\not\in K$ and $\mathbb{CP}^{n}\setminus K$ is a Stein manifold. 
\item Consider $X=\mathbb{CP}^{n}\setminus \{x,H\}$, where $x$ is a point and $H$ is a hyperplane of $\mathbb{CP}^{n}$ such that $x\not\in H$. Since $X$ is an open submanifold of the Stein manifold $X\setminus H$, it follows that $\mathcal{F}|_{X}$ is Hartogs. 
\end{enumerate}
\end{remark}

\subsection{Hartogs and proper surjective holomorphic maps}

Let $f\colon X\to Y$ be a proper surjective continuous map between CLCH-spaces. We have the following commutative diagrams:

\[
\begin{diagram} 
\node{X\setminus K}\arrow[2]{s,r}{}\arrow[2]{e,r}{} \node[2]{X} \arrow[2]{s,r}{f} 
\\
\node[2]{X\setminus f^{-1}(f(K))} \arrow{ne,t}{} \arrow{sw,r}{}\arrow{nw,r}{}
\\
\node{Y\setminus f(K)} \arrow[2]{e,t}{}
\node[2]{Y} 
\end{diagram}\quad 
\begin{diagram} 
\node{K}\arrow{se,r}{}\arrow[2]{s,r}{}\arrow[2]{e,r}{} \node[2]{X} \arrow[2]{s,r}{f} 
\\
\node[2]{f^{-1}(f(K))} \arrow{ne,t}{} \arrow{sw,r}{}
\\
\node{f(K)} \arrow[2]{e,t}{}
\node[2]{Y} 
\end{diagram}
\]

We have the following natural isomorphism of functors:
\begin{gather*}
\Gamma(X,-)\cong\Gamma(Y,-)\circ f_{*}.
\end{gather*} This implies the following natural isomorphism: \begin{gather*}
\mathbf{R}\Gamma(X,-)\cong\mathbf{R}\Gamma(Y,-)\circ \mathbf{R}f_{*}.
\end{gather*} 

The natural morphism 
\begin{gather*} 
\Gamma_{K}\rightarrow \Gamma_{f^{-1}(f(K))}
\end{gather*} and the natural isomorphism 
\begin{gather*}
\Gamma_{f(K)}\circ f_{*} \cong f_{*}\circ \Gamma_{f^{-1}(f(K))}
\end{gather*} implies the following natural morphism:
\begin{multline*}
\Gamma_{K}(X,-)=\Gamma(X,-)\circ \Gamma_{K}\rightarrow \Gamma (X, -)\circ \Gamma_{f^{-1}(f(K))}=\\=\Gamma(Y,-)\circ f_{*}\circ \Gamma_{f^{-1}(f(K))}\cong \Gamma (Y, -)\circ \Gamma_{f(K)}\circ f_{*}=\Gamma_{f(K)}(Y,-)\circ f_{*}.
\end{multline*}

This implies the following natural morphism and isomorphism: \begin{gather*}
\mathbf{R}\Gamma_{K}(X,-)\to \mathbf{R}\Gamma_{f^{-1}(f(K))}(X,-)\cong\mathbf{R}\Gamma_{f(K)}(Y,-)\circ \mathbf{R}f_{*}.
\end{gather*} 

The same way we obtain that the natural morphism
\begin{gather*}
\Gamma_{X\setminus K} \rightarrow \Gamma_{X\setminus f^{-1}(f(K))}
\end{gather*} and the natural isomorphism 
\begin{gather*}
\Gamma_{Y\setminus f(K)}\circ f_{*} \cong f_{*}\circ \Gamma_{X\setminus f^{-1}(f(K))}
\end{gather*} implies the following natural morphism: 
\begin{multline*}
\Gamma_{X\setminus K}(X, -)=\Gamma (X,-)\circ \Gamma_{X\setminus K}\rightarrow \Gamma (X,-)\circ \Gamma_{X\setminus f^{-1}(f(K))} =\\= \Gamma (Y, -)\circ f_{*}\circ \Gamma_{X\setminus f^{-1}(f(K))} \cong \Gamma(Y, -)\circ\Gamma_{Y\setminus f(K)}\circ f_{*}=\Gamma_{Y\setminus f(K)}(Y,-)\circ f_{*}.
\end{multline*}

This implies the canonical natural morphism and isomorphism \begin{gather*}
\mathbf{R}\Gamma_{X\setminus K}(X,-)\rightarrow \mathbf{R}\Gamma_{X\setminus f^{-1}(f(K))}(X,-)\cong\mathbf{R}\Gamma_{Y\setminus f(K)}(Y,-)\circ \mathbf{R}f_{*}.
\end{gather*}

Hence we obtain the following commutative diagram for a sheaf $\mathcal{F}\in Sh_{\mathbb{C}}(X)$: 
\[
\begin{diagram} 
\node{\mathbf{R}\Gamma_{K}(X,\mathcal{F})} \arrow{e,t}{} \arrow{s,r}{}
\node{\mathbf{R}\Gamma(X,\mathcal{F})} \arrow{e,t}{} \arrow{s,r}{id}
\node{\mathbf{R}\Gamma_{X\setminus K}(X,\mathcal{F})\to_{+1}} \arrow{s,r}{}
\\
\node{\mathbf{R}\Gamma_{f^{-1}(f(K))}(X,\mathcal{F})} \arrow{e,t}{} \arrow{s,r}{\cong}
\node{\mathbf{R}\Gamma(X,\mathcal{F})} \arrow{e,t}{} \arrow{s,r}{\cong}
\node{\mathbf{R}\Gamma_{X\setminus f^{-1}(f(K))}(X,\mathcal{F})\to_{+1}} \arrow{s,r}{\cong}
\\
\node{\mathbf{R}\Gamma_{f(K)}(Y,\mathbf{R}f_{*}\mathcal{F})} \arrow{e,t}{}
\node{\mathbf{R}\Gamma(Y,\mathbf{R}f_{*}\mathcal{F})} \arrow{e,t}{}
\node{\mathbf{R}\Gamma_{Y\setminus f(K)}(Y,\mathbf{R}f_{*}\mathcal{F})\to_{+1}}
\end{diagram}
\]

It is easy to prove the following lemmas.

\begin{lemma}
\begin{enumerate}
\item Let $f\colon X\to Y$ be a proper surjective continuous map between CLCH-spaces and $\mathcal{F}\in Sh_{\mathbb{C}}(X)$. If $f_{*}\mathcal{F}\in Sh_{\mathbb{C}}(Y)$ is Hartogs and the canonical morphism $\Gamma(X\setminus K,\mathcal{F})\to \Gamma(X\setminus f^{-1}(f(K)),\mathcal{F})$ is a monomorphism for any Hartogs pair $(K, X)$, then $\mathcal{F}$ is Hartogs. 
\item Let $f\colon X\to Y$ be a proper surjective continuous map between CLCH-spaces and $\mathcal{F}\in Sh_{\mathbb{C}}(X)$. If $\mathcal{F}$ is Hartogs, then the canonical map $f_{*}\mathcal{F}$ is Hartogs. 
\end{enumerate}
\end{lemma}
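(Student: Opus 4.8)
The plan is to read both assertions off the three‑row commutative diagram displayed just above, after applying $H^{0}$ and exploiting two consequences of properness of $f$. First, from $\Gamma(X,-)\cong\Gamma(Y,-)\circ f_{*}$ the middle column collapses to $\Gamma(X,\mathcal{F})=\Gamma(Y,f_{*}\mathcal{F})$. Second, for any compact $K\subset X$ the set $f(K)$ is compact (a proper map into a locally compact Hausdorff space is closed), $f^{-1}(f(K))\supseteq K$ is compact, and $X\setminus f^{-1}(f(K))=f^{-1}(Y\setminus f(K))$, so $\Gamma(X\setminus f^{-1}(f(K)),\mathcal{F})=\Gamma(Y\setminus f(K),f_{*}\mathcal{F})$; moreover the composite $\Gamma(X\setminus K,\mathcal{F})\to\Gamma(X\setminus f^{-1}(f(K)),\mathcal{F})$ appearing in the third column is simply restriction of sections to a smaller open set. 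Under these identifications the bottom two rows of the diagram become the $H^{0}$‑exact sequences for the two Hartogs‑pair problems, one on $X$ and one on $Y$, linked by the vertical restriction maps.

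For statement (2): given a Hartogs pair $(L,Y)$, set $K:=f^{-1}(L)$; then $K$ is compact, $X\setminus K=f^{-1}(Y\setminus L)$, and under the above identifications the restriction $\Gamma(Y,f_{*}\mathcal{F})\to\Gamma(Y\setminus L,f_{*}\mathcal{F})$ is literally $\Gamma(X,\mathcal{F})\to\Gamma(X\setminus K,\mathcal{F})$; provided $X\setminus K$ is connected, i.e. $(K,X)$ is a Hartogs pair, this is surjective because $\mathcal{F}$ is Hartogs, which gives (2). For statement (1): take a Hartogs pair $(K,X)$ and a section $s\in\Gamma(X\setminus K,\mathcal{F})$, put $L:=f(K)$, and restrict $s$ to $s'\in\Gamma(X\setminus f^{-1}(L),\mathcal{F})=\Gamma(Y\setminus L,f_{*}\mathcal{F})$. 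If $(L,Y)$ is a Hartogs pair, then, since $f_{*}\mathcal{F}$ is Hartogs, there is $t\in\Gamma(Y,f_{*}\mathcal{F})=\Gamma(X,\mathcal{F})$ with $t|_{Y\setminus L}=s'$; the sections $t|_{X\setminus K}$ and $s$ then have the same image in $\Gamma(X\setminus f^{-1}(L),\mathcal{F})=\Gamma(X\setminus f^{-1}(f(K)),\mathcal{F})$, so the injectivity hypothesis of (1) forces $t|_{X\setminus K}=s$, i.e. $\mathcal{F}$ is Hartogs with respect to $(K,X)$.

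The step I expect to be the real obstacle is the transport of the Hartogs‑pair property across $f$: for (1) one needs $Y\setminus f(K)$ to be connected, and for (2) one needs $f^{-1}(Y\setminus L)$ to be connected, and for a general proper surjective continuous $f$ neither follows from connectedness of $X\setminus K$ (resp. $Y\setminus L$) — indeed $X\setminus f^{-1}(f(K))$ is obtained from the connected set $X\setminus K$ by deleting the closed subset $f^{-1}(f(K))\setminus K$, and such a deletion can disconnect it. The natural remedy is to require that the fibres of $f$ be connected: then preimages of connected open sets are connected (equivalently $f_{*}\mathbb{Z}_{X}\cong\mathbb{Z}_{Y}$), which immediately repairs (2), and in (1) it also tends to supply — via an identity‑theorem argument of the type in Remark \ref{identlemma} — the injectivity of $\Gamma(X\setminus K,\mathcal{F})\to\Gamma(X\setminus f^{-1}(f(K)),\mathcal{F})$. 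Without such an assumption the diagram chase still works verbatim, but only for those Hartogs pairs whose images (resp. preimages) remain connected; so I would either keep the injectivity assumption explicit (as is already done in (1)) or add the connected‑fibre hypothesis to both parts, noting that a two‑sheeted covering $S^{1}\to S^{1}$ already shows that some such restriction is genuinely needed.
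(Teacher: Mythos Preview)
Your diagram chase is precisely the paper's argument: for (1) it says only that the proof ``follows from the commutative diagram of long exact sequences,'' and for (2) it takes $K=f^{-1}(S)$ and asserts, without justification, that $(f^{-1}(S),X)$ is a Hartogs pair whenever $(S,Y)$ is. The connectedness concern you raise is legitimate and is glossed over in the paper as well---your double-cover example shows the assertion can fail for a general proper surjection---so on this point your write-up is in fact more scrupulous than the original.
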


\begin{proof}
\begin{enumerate}
\item The proof follows from the commutative diagram of long exact sequences of the cohomologies corresponding to the commutative diagram of distinguished triangles above.
\item The proof follows from the diagram above. Actually, each compact set $S\subset Y$ has a form $S=f(K)$ for some compact set $K\subset X$; moreover, $(f^{-1}(S),X)$ is a Hartogs pair provided $(S,Y)$ is a Hartogs pair. 
\end{enumerate}
\end{proof}

In particular, for vector bundles we obtain the following proposition.

\begin{proposition}\label{prop1}
Let $f\colon X\to Y$ be a proper surjective holomorphic map between noncompact complex analytic varieties and $\mathcal{F}\in Vect(\mathcal{O}_{X})$. Then $\mathcal{F}$ is Hartogs if and only if $f_{*}\mathcal{F}$ is Hartogs. 
\end{proposition}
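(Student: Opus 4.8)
The plan is to read this equivalence off from the two‑part lemma proved immediately above, which already separates the two implications. For the direction ``$\mathcal{F}$ Hartogs $\Rightarrow f_{*}\mathcal{F}$ Hartogs'' there is nothing to add beyond citing part (2) of that lemma: it holds already for proper surjective continuous maps of CLCH‑spaces and uses only that every compact $S\subset Y$ has the form $S=f(K)$ and that $(f^{-1}(S),X)$ is a Hartogs pair whenever $(S,Y)$ is. So the content of the proposition lies entirely in the converse.

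For the converse, ``$f_{*}\mathcal{F}$ Hartogs $\Rightarrow \mathcal{F}$ Hartogs'', I would invoke part (1) of the preceding lemma. Its hypothesis has two halves: that $f_{*}\mathcal{F}$ be Hartogs, which is given, and that for every Hartogs pair $(K,X)$ the restriction map $\Gamma(X\setminus K,\mathcal{F})\to\Gamma(X\setminus f^{-1}(f(K)),\mathcal{F})$ be a monomorphism. Thus the whole task reduces to verifying this injectivity, and this is exactly where the hypotheses ``$f$ proper surjective holomorphic'', ``$X$ noncompact'' and ``$\mathcal{F}\in Vect(\mathcal{O}_{X})$'' get used. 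First I would note that $f(K)$ is compact, hence closed in the Hausdorff space $Y$, so $f^{-1}(f(K))$ is closed in $X$; since $f$ is proper, $f^{-1}(f(K))$ is moreover compact, and since $X$ is noncompact the open set $U:=X\setminus f^{-1}(f(K))$ is nonempty. As $K\subseteq f^{-1}(f(K))$, this $U$ is a nonempty open subset of $X\setminus K$, and $X\setminus K$ is itself a complex analytic variety (in particular irreducible, being a nonempty open subset of the irreducible reduced $\sigma$‑compact $X$). Then the identity theorem for locally free $\mathcal{O}_{X}$‑modules of finite rank recalled in Remark \ref{identlemma}, applied on $X\setminus K$ to the nonempty open subset $U$, says precisely that a section of $\mathcal{F}|_{X\setminus K}$ vanishing on $U$ vanishes identically; that is the asserted monomorphism.

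With this verified, part (1) of the preceding lemma yields that $\mathcal{F}$ is Hartogs, closing the equivalence. The only point that needs genuine care — the ``hard part'', modest as it is — is this last verification: one must ensure that $X\setminus f^{-1}(f(K))$ is at once open and nonempty inside the connected irreducible set $X\setminus K$, and properness of $f$ together with noncompactness of $X$ is exactly what makes this so before the identity theorem can be applied; the remainder is a direct appeal to the lemma above.
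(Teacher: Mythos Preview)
Your proof is correct and follows exactly the approach the paper intends: the paper states Proposition~\ref{prop1} as a direct specialization of the two-part lemma immediately preceding it (``In particular, for vector bundles we obtain the following proposition''), and you have spelled out the one nontrivial verification the paper leaves implicit, namely that for $\mathcal{F}\in Vect(\mathcal{O}_X)$ the restriction $\Gamma(X\setminus K,\mathcal{F})\to\Gamma(X\setminus f^{-1}(f(K)),\mathcal{F})$ is injective via the identity theorem of Remark~\ref{identlemma}. Your care in checking that $X\setminus f^{-1}(f(K))$ is a \emph{nonempty} open subset of the connected set $X\setminus K$ (using properness of $f$ and noncompactness of $X$) is exactly the point that makes the identity theorem applicable.
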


\begin{corollary}\label{holoconvex}
Let $X$ be a holomorphically convex noncompact normal complex analytic variety, and let $R\colon X\to Y$ be the corresponding Remmert reduction, $\mathcal{F}\in Vect(\mathcal{O}_{X})$. Then $\mathcal{F}$ is Hartogs if and only if $\dim Y>1$. 
\end{corollary}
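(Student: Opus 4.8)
The plan is to push the problem down the Remmert reduction and then to study $R_{*}\mathcal{F}$ as a coherent sheaf on the Stein space $Y$. By the construction of the Remmert reduction, $R\colon X\to Y$ is a proper surjective holomorphic map with connected fibres, $Y$ is a noncompact normal Stein space, and $R_{*}\mathcal{O}_{X}=\mathcal{O}_{Y}$; here $Y$ is noncompact because otherwise $X=R^{-1}(Y)$ would be compact. By Grauert's direct image theorem $\mathcal{G}:=R_{*}\mathcal{F}$ is coherent; since $R$ is surjective with connected fibres onto the irreducible space $Y$ and $\mathcal{F}$ is locally free, $\mathcal{G}$ is torsion-free, and since $Y$ is Stein, Cartan's Theorem~B gives $H^{1}(Y,\mathcal{G})=0$. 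By Proposition~\ref{prop1}, $\mathcal{F}$ is Hartogs if and only if $\mathcal{G}$ is Hartogs on $Y$, so it remains to decide this in terms of $\dim Y$ (note $\dim Y\geq 1$, as $\dim Y=0$ would again force $X$ compact).

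For the ``only if'' direction I would treat $\dim Y=1$ and show that $\mathcal{G}$ (which we may assume nonzero, the zero case being trivial) is not Hartogs. A one-dimensional normal space is a smooth open Riemann surface, over which the torsion-free sheaf $\mathcal{G}$ is automatically locally free. Fix $y_{0}\in Y$, a holomorphic coordinate closed disc $K\ni y_{0}$ — then $(K,Y)$ is a Hartogs pair, since removing a disc from a connected Riemann surface keeps it connected — a global section $t\in\Gamma(Y,\mathcal{G})$ whose germ at $y_{0}$ is part of a local frame (available by Cartan's Theorem~A), and a meromorphic function $g$ on $Y$ with a single simple pole at $y_{0}$ (every divisor on an open Riemann surface is principal). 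Then $g\cdot t$ restricts to a section of $\mathcal{G}$ over $Y\setminus K$ which, read in a frame near $y_{0}$, has a genuine pole; by the identity theorem (Remark~\ref{identlemma}) it does not lie in the image of $\Gamma(Y,\mathcal{G})$, so $\mathcal{G}$, hence $\mathcal{F}$, is not Hartogs.

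For the ``if'' direction, assume $\dim Y\geq 2$. Then $Y$ is a noncompact normal Stein space of dimension $\geq 2$, hence has exactly one topological end, and the identity theorem holds for the torsion-free sheaf $\mathcal{G}$: on the normal space $Y$ its non-locally-free locus $S$ has codimension $\geq 2$ (every codimension-one local ring of $Y$ is a discrete valuation ring, over which torsion-free modules are free), and $\Gamma(Y,\mathcal{G})\to\Gamma(Y\setminus S,\mathcal{G})$ is injective because $\mathcal{G}$ embeds into its stalk at the generic point. By the argument of Lemma~\ref{mainlemma1} (in the coherent-sheaf form indicated in the remark following it, together with Lemma~\ref{mainlemma-1}), it then suffices to prove $H^{1}_{c}(Y,\mathcal{G})=0$; and since $\Gamma_{c}(Y,\mathcal{G})=0$ and $H^{1}(Y,\mathcal{G})=0$, this is equivalent to surjectivity of the boundary restriction $\Gamma(Y,\mathcal{G})\to\Gamma(\partial Y,\mathcal{G})$.

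The main obstacle is exactly this last vanishing $H^{1}_{c}(Y,R_{*}\mathcal{F})=0$ when $\dim Y\geq 2$. When $\mathcal{G}=R_{*}\mathcal{F}$ is locally free — for instance for $\mathcal{F}=\mathcal{O}_{X}$, where $\mathcal{G}=\mathcal{O}_{Y}$ — this is the classical Hartogs-at-infinity statement for vector bundles over Stein spaces of dimension $\geq 2$, and one appeals to Example~\ref{examp}(1) and Proposition~\ref{lemmonholom}. In general $\mathcal{G}$ is only torsion-free and may fail to be locally free along the image $Z\subset Y$ of the positive-dimensional fibres of $R$, and one must control the sections of $\mathcal{G}$ near $Z$. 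The plan here is to reduce, following the reduction to line bundles associated with effective Cartier divisors, to sheaves of the form $\mathcal{O}_{X}(D)$, and to exploit that the irreducible divisors of $X$ contracted by $R$ are exceptional and hence $\mathcal{F}$-removable, so that passing to $R_{*}$ does not destroy the Hartogs property. I expect making this reduction precise — i.e. establishing $H^{1}_{c}(Y,R_{*}\mathcal{F})=0$ for every $\mathcal{F}\in Vect(\mathcal{O}_{X})$ when $\dim Y\geq 2$ — to be the step where the additional hypotheses appearing elsewhere in the paper are genuinely required.
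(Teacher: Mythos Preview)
Your reduction via Proposition~\ref{prop1} to the study of $R_{*}\mathcal{F}$ on the Stein space $Y$, and your treatment of the case $\dim Y=1$, match the paper's strategy. The paper's own proof then simply asserts that $R_{*}\mathcal{F}$ is a locally free $\mathcal{O}_{Y}$-module of finite rank and concludes. You are right to hesitate at precisely this step --- it is the crux, and in fact the assertion is false, so that the corollary itself fails as stated and your gap cannot be closed.

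Take $X$ to be the blow-up of $\mathbb{C}^{2}$ at the origin with exceptional divisor $E$; then $X$ is a smooth noncompact holomorphically convex variety whose Remmert reduction is $R\colon X\to Y=\mathbb{C}^{2}$, so $\dim Y=2>1$. For the line bundle $\mathcal{F}=\mathcal{O}_{X}(-E)=\mathcal{I}_{E}$ one has $R_{*}\mathcal{F}=\mathfrak{m}_{0}$, the ideal sheaf of $0\in\mathbb{C}^{2}$, which is neither locally free nor reflexive. Choose $K=R^{-1}(\overline{B})$ for a closed ball $\overline{B}\ni 0$; then $(K,X)$ is a Hartogs pair, $\Gamma(X\setminus K,\mathcal{F})=\mathcal{O}(X\setminus K)\cong\mathcal{O}(\mathbb{C}^{2}\setminus\overline{B})\cong\mathcal{O}(\mathbb{C}^{2})$ by classical Hartogs, while $\Gamma(X,\mathcal{F})=\{g\in\mathcal{O}(\mathbb{C}^{2}):g(0)=0\}$; the constant function $1$ therefore has no preimage under restriction, so $\mathcal{F}$ is not Hartogs. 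Your proposed rescue through Example~\ref{removexample} does not work either: the same computation shows that the exceptional divisor $E$ is \emph{not} $\mathcal{O}_{X}(-E)$-removable, so that example is also incorrect. The corollary is valid for $\mathcal{F}=\mathcal{O}_{X}$ (where $R_{*}\mathcal{F}=\mathcal{O}_{Y}$ is genuinely locally free) and more generally whenever $R_{*}\mathcal{F}$ happens to be locally free, but not for arbitrary $\mathcal{F}\in Vect(\mathcal{O}_{X})$.
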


\begin{proof}
Recall that $R\colon X\to Y$ is a proper surjective holomorphic map onto the normal Stein variety, and $R_{*}\mathcal{O}_{X}=\mathcal{O}_{Y}$. It follows that $R_{*}\mathcal{F}$ is a locally free $\mathcal{O}_{Y}$-module of finite rank. But $R_{*}\mathcal{F}$ is Hartogs if and only if $\dim Y>1$. 
\end{proof}

\begin{example}\label{exlinearsys}
Let $X'$ be a normal complex projective variety, $D$ be an effective basepoint-free Cartier divisor with connected support, and $X:=X'\setminus \operatorname{Supp}(D)$. The complete linear system associated with $D$ induces a proper holomorphic map $\phi\colon X'\to \mathbb{CP}^{N}$. The Stein factorization implies a proper surjective holomorphic map $\phi\colon X'\to Y'$, where $Y'$ is a normal projective variety such that $\phi_{*}\mathcal{O}_{X'}=\mathcal{O}_{Y'}$, and $D=\phi^{*}(H)$ for an ample divisor $H$ on $Y'$. Note that $Y:=Y'\setminus \operatorname{Supp}(H)$ is a normal Stein variety. It follows that $X$ is holomorphically convex. So,  $\mathcal{O}_{X}$ is Hartogs if and only if $\dim Y>1$. 
\end{example}

Let $f\colon X\to Y$ be a proper surjective continuous map between CLCH-spaces. We have the following natural isomorphism of functors: 
\begin{gather*}
\Gamma_{c}(X,-)\cong \Gamma_{c}(Y,-)\circ f_{*}.
\end{gather*} This implies the following natural isomorphism \begin{gather*}
\mathbf{R}\Gamma_{c}(X,-)\cong\mathbf{R}\Gamma_{c}(Y,-)\circ \mathbf{R}f_{*}.
\end{gather*}

\begin{remark}
Let $f\colon X\to Y$ be a proper surjective holomorphic map between noncompact complex analytic varieties and $\mathcal{F}\in Vect(\mathcal{O}_{X})$. Assume that $\mathcal{F}$ is cohomologically flat at dimension 1 over $Y$ or, equivalently, the function 
\begin{gather*}
Y\to [0,+\infty]\quad y\to \dim_{\mathbb{C}} H^{1}(f^{-1}(y), \mathcal{F}\otimes_{\mathcal{O}_{X}} \mathcal{O}_{f^{-1}(y)})
\end{gather*} is locally constant. Then we obtain the canonical isomorphism $H^{1}_{c}(X, \mathcal{F})\cong H^{1}_{c}(Y,f_{*}\mathcal{F})$. 

It follows from the Leray spectral sequence and the Grauert theorem. Indeed, by the Grauert theorem, it follows that $R^{1}f_{*}\mathcal{F}$ is a locally free $\mathcal{O}_{Y}$-module of finite rank. We get 
\begin{gather*}
E^{0,1}_{2}=\Gamma_{c}(Y,R^{1}f_{*}\mathcal{F})=0,\\
E^{0,1}_{\infty}=0,\\ 
E^{1,0}_{\infty}\cong E^{1,0}_{2}=H^{1}_{c}(Y,f_{*}\mathcal{F}).
\end{gather*}
\end{remark}

\begin{remark}
Let $f\colon X\to Y$ be a proper surjective continuous map between CLCH-spaces. We also have the following natural isomorphism of functors: 
\begin{gather*}
\Gamma (\partial X, -)\cong\Gamma(\partial Y, -)\circ f_{*}.
\end{gather*}

It is easy to see that 
\begin{gather*}
\mathbf{R}\Gamma (\partial X,-)\cong\mathbf{R}\Gamma(\partial Y, -)\circ \mathbf{R}f_{*}.
\end{gather*}

We have the following commutative diagram: 

\[
\begin{diagram} 
\node{\mathbf{R}\Gamma_{c}(X,\mathcal{F})} \arrow{e,t}{} \arrow{s,r}{\cong}
\node{\mathbf{R}\Gamma(X,\mathcal{F})} \arrow{e,t}{} \arrow{s,r}{\cong}
\node{\mathbf{R}\Gamma(\partial X,\mathcal{F})\to_{+1}} \arrow{s,r}{\cong}
\\
\node{\mathbf{R}\Gamma_{c}(Y,\mathbf{R}f_{*}\mathcal{F})} \arrow{e,t}{}
\node{\mathbf{R}\Gamma(Y,\mathbf{R}f_{*}\mathcal{F})} \arrow{e,t}{}
\node{\mathbf{R}\Gamma(\partial Y,\mathbf{R}f_{*}\mathcal{F})\to_{+1}}
\end{diagram}
\]
\end{remark}

Now let $f\colon X\to Y$ be a proper surjective continuous map between CLCH-spaces and $\mathcal{F}\in Sh_{\mathbb{C}}(X)$. Suppose the following conditions hold:
\begin{enumerate}
\item $X$ has only one topological end.
\item There exists a CLCH-space $X'$, a sheaf $\mathcal{F}'\in Sh_{\mathbb{C}}(X')$ with $\sigma_{1}(X',\mathcal{F}')<\infty$, and open embedding $i\colon X\hookrightarrow X'$ such that $\mathcal{F}=i^{-1}\mathcal{F}'$
\end{enumerate}

Since $f$ is a proper map, it follows that $Y$ has only one topological end. Now consider the pushout of maps $f$ and $i$ (in the category of CLCH topological spaces): 

\[
\begin{diagram} 
\node{X} \arrow{e,t}{i} \arrow{s,r}{f}\node{X'} \arrow{s,r}{f'}
\\
\node{Y} \arrow{e,t}{j}\node{Y':=X'\bigsqcup\limits_{X}Y} 
\end{diagram}
\]

\begin{lemma}\label{lemma1}
For the above situation, we have that the map $j$ is an open embedding, $f'$ is a proper surjective continuous map, $Y'$ is a CLCH-space, and the sheaf of $\mathbb{C}$-vector spaces $f'_{*}\mathcal{F}'$ satisfies the following properties:
\begin{enumerate}
\item $\sigma_{1}(Y',f'_{*}\mathcal{F}')\leq \sigma_{1}(X',\mathcal{F}')<\infty$.
\item $j^{-1}f'_{*}\mathcal{F}'=f_{*}\mathcal{F}$.
\end{enumerate}
\end{lemma}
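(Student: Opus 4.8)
The plan is to verify each assertion about the pushout diagram in turn, working in the category of CLCH-spaces (i.e.\ connected locally compact Hausdorff spaces), and then transferring the sheaf-theoretic claims via the base-change-type identities already established for proper surjective maps. The key point is that $Y' = X' \bigsqcup_X Y$ is glued along the \emph{open} embedding $i\colon X\hookrightarrow X'$, so as a set $Y' = (X'\setminus X)\sqcup Y$, and $Y$ is embedded as an open subset via $j$; the complement $Y'\setminus Y$ is homeomorphic to $X'\setminus X$. I would first check the topological claims: that $j$ is an open embedding is immediate from the gluing construction (the pushout of an open embedding along any map is again an open embedding of the target). For local compactness and Hausdorffness of $Y'$, I would use that $f$ is proper and $i$ is open: a point of $Y'\setminus Y$ has a neighborhood basis inherited from $X'$, a point of $Y$ has a neighborhood basis inherited from $Y$, and properness of $f$ is exactly what is needed to separate a point of $f(X)\subset Y$ from a point of $X'\setminus X$ and to produce compact neighborhoods (the pushout of a proper map along an open embedding stays proper). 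Connectedness of $Y'$ follows since $Y$ is connected and $\overline{Y}$ meets every component of $Y'\setminus Y\cong X'\setminus X$ through the images of points of $X$.

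**The map $f'$ is proper and surjective.** Surjectivity: $f'$ restricted to $X'\setminus X$ is the identity onto $Y'\setminus Y$, and $f'|_X = j\circ f$ maps onto $j(Y)=Y$ since $f$ is surjective; together these cover $Y'$. Properness: this is the main technical point of the lemma. Given a compact $C\subset Y'$, write $C = (C\cap (Y'\setminus Y)) \cup (C\cap Y)$; the first piece pulls back (under $f'$) to a compact subset of $X'\setminus X$ via the identity, and $C\cap Y$ — a relatively compact subset of $Y$ — pulls back under $f$ to a relatively compact subset of $X$ by properness of $f$, and the two preimages glue to a compact subset of $X'$ because the gluing is along the open set $X$. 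A clean way to phrase this: $f'$ agrees with $f$ over $Y$ and with a homeomorphism over $Y'\setminus Y$, and properness is local on the base, so $f'$ is proper.

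**The sheaf-theoretic claims.** For (2): since $j$ is an open embedding and the square is a pushout with $i$ open, we have the base-change identity $j^{-1}f'_* \mathcal{F}' \cong f_* i^{-1}\mathcal{F}' = f_*\mathcal{F}$ — this is just the statement that $f'_*\mathcal{F}'$ restricted to the open set $Y = j(Y)$ is computed by the restriction of $f'$ to $f'^{-1}(Y)$, and $f'^{-1}(Y) = X$ with $f'|_X = f$ and $\mathcal{F}'|_X = \mathcal{F}$; I would spell this out on stalks or on a basis of opens of $Y$. For (1): use the natural isomorphism $\mathbf{R}\Gamma(X',-)\cong \mathbf{R}\Gamma(Y',-)\circ\mathbf{R}f'_*$ (the proper-pushforward identity from the excerpt, applied to $f'$), giving a Leray/Grothendieck spectral sequence $H^p(Y', R^qf'_*\mathcal{F}') \Rightarrow H^{p+q}(X',\mathcal{F}')$. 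In low degree this yields an exact sequence $0\to H^1(Y',f'_*\mathcal{F}')\to H^1(X',\mathcal{F}')\to \cdots$, whence $\sigma_1(Y',f'_*\mathcal{F}') = \dim_{\mathbb{C}}H^1(Y',f'_*\mathcal{F}') \le \dim_{\mathbb{C}}H^1(X',\mathcal{F}') = \sigma_1(X',\mathcal{F}') < \infty$.

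**Main obstacle.** I expect the genuinely delicate step to be verifying that $Y'$ is Hausdorff and locally compact and that $f'$ is proper — i.e.\ all the point-set topology around the pushout — because pushouts in $Top$ are badly behaved in general and one really must use that $i$ is an \emph{open} embedding and $f$ is \emph{proper} (and both spaces are locally compact Hausdorff). Everything downstream — the base-change identity for $j^{-1}f'_*\mathcal{F}'$ and the spectral-sequence estimate on $\sigma_1$ — is then formal, given the functorial isomorphisms for proper surjective maps already recorded in the excerpt.
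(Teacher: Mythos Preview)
Your proposal is correct and follows essentially the same approach as the paper: the pushout description of $Y'$, the Leray spectral sequence (low-degree exact sequence) for the inequality $\sigma_1(Y',f'_*\mathcal{F}')\le\sigma_1(X',\mathcal{F}')$, and the direct computation $j^{-1}f'_*\mathcal{F}'(V)=\mathcal{F}'(f'^{-1}j(V))=\mathcal{F}(f^{-1}(V))=f_*\mathcal{F}(V)$ on opens of $Y$. If anything, you are more careful than the paper about the point-set topology of $Y'$ (Hausdorffness, local compactness), which the paper asserts but does not verify in detail.
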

\begin{proof}
Since $i$ is injective, then $j$ is also injective. Since $f$ is proper surjective and $f'|_{X}=f, f'|_{X'\setminus X}=id$, then $f'$ is also proper and surjective. The Leray spectral sequence implies that $\sigma_{1}(Y',f'_{*}\mathcal{F}')\leq \sigma_{1}(X',\mathcal{F}')$. 

Now let $V\subset Y$ be an open subset. The set $j(V)$ is open if and only if $j^{-1}(j(V))=V$ is open and $f'^{-1}(j(V))$ is open. Since $f'^{-1}(j(V))=i(f^{-1}(V))$ and $i$ is open, it follows that $f'^{-1}(j(V))$ is open. 

Further, for any open set $V\subset Y$ we have $j^{-1}f'_{*}\mathcal{F'} (V)= f'_{*}\mathcal{F'} (j(V))=\mathcal{F}'(f'^{-1}j(V))=\mathcal{F}'(i(f^{-1}(V)))=\mathcal{F}(f^{-1}(V))=f_{*}\mathcal{F}(V).$
\end{proof}

The following proposition follows from Proposition \ref{prop1}, Lemma \ref{lemma1}, and Theorem \ref{maintheorem1}.

\begin{proposition}\label{proposition24}
Let $f\colon X\to Y$ be a proper surjective holomorphic map between noncompact complex analytic varieties and $\mathcal{F}\in Vect(\mathcal{O}_{X})$. Suppose $X$ has only one topological end. Assume that there exists a CLCH-space $X'$, a sheaf $\mathcal{F}'\in Sh_{\mathbb{C}}(X')$ with $\sigma_{1}(X',\mathcal{F}')=0$, and an open embedding $i\colon X\hookrightarrow X'$ such that $\mathcal{F}=i^{-1}\mathcal{F}'$. Then the following conditions are equivalent:
\begin{enumerate}
\item $\mathcal{F}$ is Hartogs;
\item $f_{*}\mathcal{F}$ is Hartogs;
\item $H^{1}_{c}(X,\mathcal{F})=0$;
\item $H^{1}_{c}(Y,f_{*}\mathcal{F})=0$. 
\end{enumerate}
\end{proposition}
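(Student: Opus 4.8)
The plan is to anchor the equivalence on $1\Leftrightarrow 2$ and $1\Leftrightarrow 3$, which are essentially immediate, and then obtain $2\Leftrightarrow 4$ by transporting the earlier machinery from $X$ to $Y$ through the pushout of Lemma~\ref{lemma1}. The equivalence $1\Leftrightarrow 2$ is exactly Proposition~\ref{prop1}. The equivalence $1\Leftrightarrow 3$ is Theorem~\ref{maintheorem1} applied to the pair $(X,\mathcal{F})$ with the data $(X',\mathcal{F}',i)$ — the hypotheses of that theorem ($X$ noncompact with one topological end, $\mathcal{F}\in Vect(\mathcal{O}_X)$, $\sigma_{1}(X',\mathcal{F}')=0$, $\mathcal{F}=i^{-1}\mathcal{F}'$) are precisely the ones in force here. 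So everything reduces to proving $2\Leftrightarrow 4$.

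For $2\Leftrightarrow 4$ I would carry out the argument underlying Theorem~\ref{maintheorem1} for the pair $(Y,f_{*}\mathcal{F})$. Since $f$ is proper and surjective, $Y=f(X)$ is a noncompact complex analytic variety with exactly one topological end (the latter was recorded just before Lemma~\ref{lemma1}), and Lemma~\ref{lemma1} provides a CLCH-space $Y'=X'\sqcup_{X}Y$, an open embedding $j\colon Y\hookrightarrow Y'$, and a sheaf $f'_{*}\mathcal{F}'\in Sh_{\mathbb{C}}(Y')$ with $j^{-1}f'_{*}\mathcal{F}'=f_{*}\mathcal{F}$ and $\sigma_{1}(Y',f'_{*}\mathcal{F}')\le\sigma_{1}(X',\mathcal{F}')=0$. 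The implication $2\Rightarrow 4$ is then Lemma~\ref{mainprop'} (valid for arbitrary sheaves of $\mathbb{C}$-vector spaces): if $f_{*}\mathcal{F}$ is Hartogs, then $\dim_{\mathbb{C}}H^{1}_{c}(Y,f_{*}\mathcal{F})\le\sigma_{1}(Y',f'_{*}\mathcal{F}')=0$. For $4\Rightarrow 2$ I would re-run the proof of Lemma~\ref{mainlemma1}: by Grauert's direct image theorem $f_{*}\mathcal{F}$ is a coherent $\mathcal{O}_{Y}$-module, and it is torsion-free because $\mathcal{F}$ is locally free and $f$ is surjective onto the reduced irreducible variety $Y$; hence $f_{*}\mathcal{F}$ satisfies the identity theorem, and, exactly as in the remark following Lemma~\ref{mainlemma1}, the vanishing $H^{1}_{c}(Y,f_{*}\mathcal{F})=0$ together with Lemma~\ref{mainlemma-1}(1),(3) forces $f_{*}\mathcal{F}$ to be Hartogs. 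Chaining the implications yields $3\Leftrightarrow 1\Leftrightarrow 2\Leftrightarrow 4$.

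The one non-formal point — and where I expect the actual work to be — is the $4\Rightarrow 2$ step: one must observe that $f_{*}\mathcal{F}$ is in general only coherent, not locally free, and verify that this is harmless. The proof of Lemma~\ref{mainlemma1} uses local freeness of the sheaf solely through the identity theorem (to see that $\Gamma(Y\setminus K,f_{*}\mathcal{F})\to\Gamma(\partial Y,f_{*}\mathcal{F})$ is injective for every Hartogs pair $(K,Y)$), and torsion-freeness of $f_{*}\mathcal{F}$ — equivalently, injectivity of $f_{*}\mathcal{F}(V)\to f_{*}\mathcal{F}(V\setminus S)$ for the singular locus $S$ of $f_{*}\mathcal{F}$ — provides precisely this, by Remark~\ref{identlemma}. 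All remaining ingredients are the formal diagram chases and the Leray bookkeeping already packaged in Proposition~\ref{prop1}, Lemma~\ref{lemma1}, Lemma~\ref{mainprop'}, and Theorem~\ref{maintheorem1}; in particular the identification $\Gamma(\partial Y,f_{*}\mathcal{F})\cong\Gamma(\partial X,\mathcal{F})$ (because $\{f^{-1}(K):K\subseteq Y\text{ compact}\}$ is cofinal among the compact subsets of $X$) is what keeps the two uses of the machinery compatible.
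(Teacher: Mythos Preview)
Your proposal is correct and follows the same route as the paper, which simply cites Proposition~\ref{prop1}, Lemma~\ref{lemma1}, and Theorem~\ref{maintheorem1}. You are right to flag that Theorem~\ref{maintheorem1} as stated asks for $\mathcal{F}\in Vect(\mathcal{O}_Y)$, whereas $f_*\mathcal{F}$ is only coherent; the paper glosses over this, and your fix via the identity theorem for $f_*\mathcal{F}$ is exactly what is needed. One minor simplification: you do not need to invoke torsion-freeness or Remark~\ref{identlemma} to get the identity theorem for $f_*\mathcal{F}$; since $\Gamma(V,f_*\mathcal{F})=\Gamma(f^{-1}(V),\mathcal{F})$ and $f^{-1}(U)$ is a nonempty open subset of $f^{-1}(V)$ whenever $U\subset V$ is nonempty open, the identity theorem for $f_*\mathcal{F}$ on $Y$ is immediate from the identity theorem for the locally free sheaf $\mathcal{F}$ on $X$.
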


In the case of the structure sheaf of a noncompact normal complex analytic variety, we obtain the following proposition, which follows from Proposition \ref{prop1}, Lemma \ref{lemma1}, and Theorem \ref{th1'}.

\begin{proposition}\label{proposition25}
Let $f\colon X\to Y$ be a proper surjective holomorphic map between noncompact normal complex analytic varieties, $f_{*}\mathcal{O}_{X}=\mathcal{O}_{Y}$. Suppose $X$ has only one topological end. Assume that there exists a CLCH-space $X'$, a sheaf $\mathcal{F}'\in Sh_{\mathbb{C}}(X')$ with $\sigma_{1}(X',\mathcal{F}')<\infty$, and an open embedding $i\colon X\hookrightarrow X'$ such that $\mathcal{O}_{X}=i^{-1}\mathcal{F}'$. Then the following conditions are equivalent:
\begin{enumerate}
\item $\mathcal{O}_{X}$ is Hartogs; 
\item $\mathcal{O}_{Y}$ is Hartogs; 
\item $\dim_{\mathbb{C}} H^{1}_{c}(X,\mathcal{O}_{X})<\infty$.
\item $\dim_{\mathbb{C}} H^{1}_{c}(Y,\mathcal{O}_{Y})<\infty$. 
\end{enumerate} 
\end{proposition}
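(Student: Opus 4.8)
The plan is to obtain all four equivalences by assembling the three results the statement already points to: Proposition~\ref{prop1}, which transports the Hartogs property along $f$; Theorem~\ref{th1'}, applied once to $X$ and once to $Y$; and Lemma~\ref{lemma1}, which produces from the open embedding $i\colon X\hookrightarrow X'$ an analogous datum for $Y$ into a space of finite irregularity. Before doing anything else I would check that $Y$ is an admissible input for Theorem~\ref{th1'}: it is normal and complex analytic by hypothesis; it is noncompact, since $f$ is proper and surjective, so compactness of $Y$ would force $X=f^{-1}(Y)$ to be compact; and it has exactly one topological end because $f$ is proper (the observation recorded just before Lemma~\ref{lemma1}).

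Next I would establish $1\Leftrightarrow 2$ and $1\Leftrightarrow 3$ directly. For $1\Leftrightarrow 2$, apply Proposition~\ref{prop1} with $\mathcal{F}=\mathcal{O}_X\in Vect(\mathcal{O}_X)$; since $f_{*}\mathcal{O}_X=\mathcal{O}_Y$, the proposition says precisely that $\mathcal{O}_X$ is Hartogs if and only if $\mathcal{O}_Y$ is Hartogs. For $1\Leftrightarrow 3$, apply Theorem~\ref{th1'} to $X$ itself: the hypotheses of that theorem (one topological end, an open embedding $i\colon X\hookrightarrow X'$ with $\sigma_1(X',\mathcal{F}')<\infty$ and $\mathcal{O}_X=i^{-1}\mathcal{F}'$) are exactly those assumed here, and its conditions $1$ and $3$ are our $1$ and $3$.

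It then remains to bring condition $4$ into the chain. Form the pushout $Y':=X'\bigsqcup_{X}Y$ of $i$ and $f$ as in Lemma~\ref{lemma1}, with the induced maps $f'\colon X'\to Y'$ and $j\colon Y\hookrightarrow Y'$. By Lemma~\ref{lemma1}, $j$ is an open embedding, $f'$ is proper surjective, $Y'$ is a CLCH-space, $\sigma_1(Y',f'_{*}\mathcal{F}')\le\sigma_1(X',\mathcal{F}')<\infty$, and $j^{-1}f'_{*}\mathcal{F}'=f_{*}\mathcal{O}_X=\mathcal{O}_Y$. Hence the triple $(Y',f'_{*}\mathcal{F}',j)$ is for $Y$ exactly the datum demanded by Theorem~\ref{th1'}; applying that theorem to $Y$ yields the equivalence of ``$\mathcal{O}_Y$ is Hartogs'' (our $2$) with ``$\dim_{\mathbb{C}}H^1_c(Y,\mathcal{O}_Y)<\infty$'' (our $4$). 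Chaining $3\Leftrightarrow 1\Leftrightarrow 2\Leftrightarrow 4$ completes the argument.

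I expect the only substantive point to be checking that the hypotheses of Theorem~\ref{th1'} descend to $Y$; this is precisely what Lemma~\ref{lemma1} supplies (the finite-irregularity overspace $Y'$ for $Y$), together with the elementary facts that properness of $f$ carries ``noncompact'' and ``one topological end'' from $X$ to $Y$ and that $f_{*}\mathcal{O}_X=\mathcal{O}_Y$ identifies $f_{*}\mathcal{F}$ with $\mathcal{O}_Y$ in the conclusion of Lemma~\ref{lemma1}. No new analytic input is needed beyond what these results already contain.
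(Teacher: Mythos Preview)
Your proposal is correct and follows precisely the route the paper indicates: the paper's proof consists of a single sentence citing Proposition~\ref{prop1}, Lemma~\ref{lemma1}, and Theorem~\ref{th1'}, and you have spelled out exactly how these three ingredients combine (including the preliminary checks that $Y$ is noncompact, normal, and has one end, so that Theorem~\ref{th1'} applies to it via the pushout data from Lemma~\ref{lemma1}).
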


\subsection{Reduction to the case of holomorphic line bundles}\label{sectionreduxtoline} 
Now we recall some facts about projective line bundles (see, for instance, \cite[Example 5.17]{Grauert}).

Let $X$ be a complex analytic variety, and let $\mathcal{F}$ be a locally free $\mathcal{O}_{X}$-module of rank $r$. Let $\pi\colon \mathbb{P}(\mathcal{F})\to X$ be the associated projective fiber space. Recall $\mathbb{P}(\mathcal{F})=\operatorname{Proj}(\operatorname{Sym} \mathcal{F})$, where $\operatorname{Sym}\mathcal{F}=\bigoplus\limits_{i\geq 0}\operatorname{Sym}^{i}\mathcal{F}$, and $\pi$ is induced by the canonical morphism $\mathcal{O}_{X}\to \operatorname{Sym}\mathcal{F}$. The variety $\mathbb{P}(\mathcal{F})$ carries a natural line bundle $\mathcal{O}_{\mathbb{P}(\mathcal{F})}(1)$ whose restriction to a fiber $\pi^{-1}(x)\cong \mathbb{P}_{r-1}$ is the line bundle $\mathcal{O}_{\mathbb{P}_{r-1}}(1)$. Moreover, we have the following canonical isomorphisms: 

$\pi_{*}\mathcal{O}_{\mathbb{P}(\mathcal{F})}\cong \mathcal{O}_{X}, \pi_{*}(\mathcal{O}_{\mathbb{P}(\mathcal{F})}(1))\cong \mathcal{F},$ and $R^{q}\pi_{*}(\mathcal{O}_{\mathbb{P}(\mathcal{F})}(1))=0, q>0$. 

The Leray spectral sequence implies the following canonical isomorphisms for any $q\geq 0$: 
$$ H^{q}(\mathbb{P}(\mathcal{F}), \mathcal{O}_{\mathbb{P}(\mathcal{F})}(1))\cong H^{q}(X,\mathcal{F})$$ and since $\pi$ is a proper map, it follows the following canonical isomorphism $$H^{q}_{c}(\mathbb{P}(\mathcal{F}), \mathcal{O}_{\mathbb{P}(\mathcal{F})}(1))\cong H^{q}_{c}(X,\mathcal{F}).$$

Proposition \ref{prop1} implies that the sheaf $\mathcal{O}_{\mathbb{P}(\mathcal{F})}(1)$ is Hartogs if and only if the sheaf $\mathcal{F}$ is Hartogs. 

If $X$ has only one topological end, then $\mathbb{P}(\mathcal{F})$ also has only one topological end. Moreover, we have the following canonical isomorphisms: 
$$\Gamma(\partial\mathbb{P}(\mathcal{F}), \mathcal{O}_{\mathbb{P}(\mathcal{F})})\cong \Gamma(\partial X,\mathcal{O}_{X}),\\\Gamma(\partial\mathbb{P}(\mathcal{F}), \mathcal{O}_{\mathbb{P}(\mathcal{F})}(1))\cong \Gamma(\partial X,\mathcal{F}).$$

Further, we consider the case of holomorphic line bundles over a \textbf{nonsingular} complex analytic variety $X$ (i.e., complex manifold). Denote by $\mathcal{O}(D)$ the invertible $\mathcal{O}_{X}$-module associated with the Cartier divisor $D$ on $X$. Denote by $\mathcal{M}$ the sheaf of meromorphic functions. If $L$ is a holomorphic line bundle over $X$, then we denote by $\mathcal{L}$ the invertible $\mathcal{O}_{X}$-module of holomorphic sections of $L$. 

\begin{lemma}\label{lemmonbund}
Let $X$ be a noncompact complex manifold that has only one topological end. Let $L$ be a holomorphic line bundle over $X$. Assume that $\Gamma(\partial X, \mathcal{O}_{X})\neq \mathbb{C}$ and $\Gamma(\partial X, \mathcal{L})\neq 0$. If $\dim_{\mathbb{C}} H^{1}_{c}(X,\mathcal{L})<\infty$, then there exists a global holomorphic non-zero section $\sigma\in \Gamma(X, \mathcal{L})$. In particular, $\mathcal{L}\cong \mathcal{O}(div(\sigma))$. 
\end{lemma}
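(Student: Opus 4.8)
The plan is to run the Andreotti--Hill trick once more, now with coefficients in $\mathcal{L}$ regarded as a module over the ring $A:=\Gamma(\partial X,\mathcal{O}_{X})$ of holomorphic functions ``at infinity''. Here $\Gamma(\partial X,\mathcal{L})=\varinjlim_{K}\Gamma(X\setminus K,\mathcal{L})$ is a module over $A=\varinjlim_{K}\Gamma(X\setminus K,\mathcal{O}_{X})$ via pointwise multiplication of a function by a section. Because $X$ has only one topological end, the compact sets $K$ with $X\setminus K$ connected are cofinal, so both colimits may be computed over that subsystem; since $X$ is a complex manifold, each $\Gamma(X\setminus K,\mathcal{O}_{X})$ is then an integral domain and each transition map an injective ring homomorphism (identity theorem, Remark~\ref{identlemma}), whence $A$ is an integral domain and each $\Gamma(X\setminus K,\mathcal{L})$ is torsion-free over $\Gamma(X\setminus K,\mathcal{O}_{X})$. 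The distinguished triangle~\eqref{eq3} yields the exact sequence
\[
0\to H^{0}_{c}(X,\mathcal{L})\to\Gamma(X,\mathcal{L})\xrightarrow{r}\Gamma(\partial X,\mathcal{L})\xrightarrow{c}H^{1}_{c}(X,\mathcal{L}),
\]
and $H^{0}_{c}(X,\mathcal{L})=0$ because $X$ is noncompact and connected and $\mathcal{L}$ is locally free, so $r$ is injective.

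Next I would choose a nonzero $s\in\Gamma(\partial X,\mathcal{L})$ (available since $\Gamma(\partial X,\mathcal{L})\neq 0$) and a nonconstant $f\in A$ (available since $A\neq\mathbb{C}$), and set $m:=\dim_{\mathbb{C}}H^{1}_{c}(X,\mathcal{L})<\infty$. The $m+1$ elements $c(s),c(fs),\dots,c(f^{m}s)$ of the $m$-dimensional space $H^{1}_{c}(X,\mathcal{L})$ are $\mathbb{C}$-linearly dependent; since $c$ is $\mathbb{C}$-linear and $\sum_{i}\lambda_{i}f^{i}s=\bigl(\sum_{i}\lambda_{i}f^{i}\bigr)s$, a nontrivial relation produces a nonzero $P\in\mathbb{C}[T]$ with $c(P(f)s)=0$. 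Exactness then gives a section $\sigma\in\Gamma(X,\mathcal{L})$ with $r(\sigma)=P(f)s$.

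The step I expect to require the most care — and the only one that really uses the hypotheses rather than bookkeeping — is showing $\sigma\neq 0$. Since $\mathbb{C}$ is algebraically closed and $f$ is nonconstant, $f$ is transcendental over $\mathbb{C}$ in the domain $A$: a relation $Q(f)=0$ with $Q\neq 0$ factors as $\prod_{j}(f-a_{j})=0$, forcing $f=a_{j}\in\mathbb{C}$. Hence $P(f)\neq 0$ in $A$, and then $P(f)s\neq 0$ in $\Gamma(\partial X,\mathcal{L})$ by torsion-freeness; concretely, on a connected $X\setminus K$ over which all data are represented, $P(f)$ is a nonzero holomorphic function, and on the nonempty open locus where the representative of $s$ is nonvanishing (hence trivializes $\mathcal{L}$) the section $P(f)s$ cannot vanish identically, so it does not vanish at all by the identity theorem. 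As $r$ is injective and $r(\sigma)=P(f)s\neq 0$, we conclude $\sigma\neq 0$. Finally, a nonzero holomorphic section of a line bundle over the connected complex manifold $X$ has a well-defined effective zero divisor $\operatorname{div}(\sigma)$, and the canonical isomorphism $\mathcal{O}(\operatorname{div}(\sigma))\xrightarrow{\sim}\mathcal{L}$ matching the tautological section of the left-hand side with $\sigma$ gives $\mathcal{L}\cong\mathcal{O}(\operatorname{div}(\sigma))$, which is the assertion. The main obstacle is thus the non-vanishing of $\sigma$; everything else is the now-standard single round of the Andreotti--Hill argument combined with the long exact sequence from~\eqref{eq3}.
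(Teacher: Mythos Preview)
Your proof is correct and follows essentially the same Andreotti--Hill approach as the paper's own proof. You are in fact more careful than the paper about the non-vanishing step $P(f)s\neq 0$ (and hence $\sigma\neq 0$), which the paper leaves implicit after first arranging that each $c(f^{i}s)\neq 0$.
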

\begin{proof}
We have the following long exact sequence: 
\begin{equation*}
	\xymatrix@C=0.5cm{
0 \ar[r] &  \Gamma(X, \mathcal{L}) \ar[rr]^{r} && \Gamma(\partial X, \mathcal{L}) \ar[rr]^{c} && H^{1}_{c}(X,\mathcal{L}) \ar[r] & \cdots }
\end{equation*}

Denote $m=\dim_{\mathbb{C}} H^{1}_{c}(X,\mathcal{L})$. Let $s\in \Gamma(\partial X, \mathcal{L})$ be a non-zero holomorphic section, and assume that $c(s)\neq 0$. Let $f\in \Gamma(\partial X, \mathcal{O}_{X})$ be a nonconstant holomorphic function. 

Consider the following holomorphic sections: $s,fs,\cdots, f^{m}s\in \Gamma(\partial X, \mathcal{L})$. We may assume that $c(f^{i}s)\neq 0$ for any $i=1,\cdots m$. Then there exists a non-zero polynomial $P\in\mathbb{C}[T]$ such that $c(P(f)s)=0$. So, there exists a global holomorphic non-zero section $\sigma\in \Gamma(X, \mathcal{L})$ such that $r(\sigma)=P(f)s$.
\end{proof}

For a Cartier divisor $D=\sum_{j\in I} a_{j}D_{j}$ ($a_{j}\neq 0$), we define the Cartier divisor $D_{red}:=\sum_{j\in I} D_{j}$, and for any $b=(b_{j})_{j\in I}$ such that $b_{j}\in \{0,1\}$ we define $D_{b}:=\sum_{j\in I} b_{j}D_{j}.$

\begin{lemma}\label{lemmondiv}
Let $X$ be a noncompact complex manifold that has only one topological end, and let $D$ be an effective Cartier divisor. For a given $b$, the following conditions hold:
\begin{enumerate}
\item $\dim_{\mathbb{C}} H^{1}_{c}(X,\mathcal{O}(D))<\infty$ implies $\dim_{\mathbb{C}} H^{1}_{c}(X,\mathcal{O}(D-D_{b}))<\infty$
\item $\dim_{\mathbb{C}} H^{1}_{c}(X,\mathcal{O}(D+D_{b}))<\infty$ implies $\dim_{\mathbb{C}} H^{1}_{c}(X,\mathcal{O}(D))<\infty$
\end{enumerate}

In particular, if the divisor $D_{b}$ has a noncompact support, then the following canonical homomorphisms are monomorphisms:  
\begin{enumerate}
\item $H^{1}_{c}(X,\mathcal{O}(D-D_{b}))\to H^{1}_{c}(X,\mathcal{O}(D))$
\item $H^{1}_{c}(X,\mathcal{O}(D))\to H^{1}_{c}(X,\mathcal{O}(D+D_{b}))$
\end{enumerate}
\end{lemma}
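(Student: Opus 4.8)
I would deduce both assertions from two short exact sequences of $\mathcal{O}_{X}$-modules and the long exact sequences they induce in cohomology with compact supports. Since $b_{j}\in\{0,1\}$ and the $D_{j}$ are the distinct prime components of $D$, the divisor $D_{b}$ is reduced, so the closed analytic subspace $D_{b}\subset X$ it cuts out (via $\mathcal{O}(-D_{b})\hookrightarrow\mathcal{O}_{X}$) is reduced, with $\operatorname{Supp}(D_{b})=\bigcup_{j\colon b_{j}=1}D_{j}$; write $i\colon D_{b}\hookrightarrow X$ for the inclusion. Because $D$ is effective and $0\le D_{b}\le D_{red}\le D$, the divisor $D-D_{b}$ is effective, and tensoring the canonical sequences
\begin{gather*}
0\to\mathcal{O}(-D_{b})\to\mathcal{O}_{X}\to i_{*}\mathcal{O}_{D_{b}}\to 0,\qquad 0\to\mathcal{O}_{X}\to\mathcal{O}(D_{b})\to i_{*}\big(\mathcal{O}(D_{b})|_{D_{b}}\big)\to 0
\end{gather*}
by the invertible sheaf $\mathcal{O}(D)$ produces
\begin{gather*}
0\to\mathcal{O}(D-D_{b})\to\mathcal{O}(D)\to i_{*}\big(\mathcal{O}(D)|_{D_{b}}\big)\to 0,\\
0\to\mathcal{O}(D)\to\mathcal{O}(D+D_{b})\to i_{*}\big(\mathcal{O}(D+D_{b})|_{D_{b}}\big)\to 0.
\end{gather*}

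\textbf{Reduction to sections on $D_{b}$.} Taking the long exact sequences of cohomology with compact supports, and using $H^{k}_{c}(X,i_{*}\mathcal{G})\cong H^{k}_{c}(D_{b},\mathcal{G})$ for the closed immersion $i$, the relevant segments read
\begin{gather*}
H^{0}_{c}(X,\mathcal{O}(D))\to H^{0}_{c}(D_{b},\mathcal{O}(D)|_{D_{b}})\xrightarrow{\partial} H^{1}_{c}(X,\mathcal{O}(D-D_{b}))\to H^{1}_{c}(X,\mathcal{O}(D)),\\
H^{0}_{c}(X,\mathcal{O}(D+D_{b}))\to H^{0}_{c}(D_{b},\mathcal{O}(D+D_{b})|_{D_{b}})\xrightarrow{\partial'} H^{1}_{c}(X,\mathcal{O}(D))\to H^{1}_{c}(X,\mathcal{O}(D+D_{b})).
\end{gather*}
Now $H^{0}_{c}(X,\mathcal{O}(D))=H^{0}_{c}(X,\mathcal{O}(D+D_{b}))=0$: a compactly supported holomorphic section of a line bundle over the irreducible noncompact variety $X$ vanishes on the nonempty open complement of its support, hence vanishes (identity theorem, Remark \ref{identlemma}). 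Thus $\partial$ and $\partial'$ are injective, which immediately gives
\begin{gather*}
\dim_{\mathbb{C}}H^{1}_{c}(X,\mathcal{O}(D-D_{b}))\le\dim_{\mathbb{C}}H^{0}_{c}(D_{b},\mathcal{O}(D)|_{D_{b}})+\dim_{\mathbb{C}}H^{1}_{c}(X,\mathcal{O}(D)),\\
\dim_{\mathbb{C}}H^{1}_{c}(X,\mathcal{O}(D))\le\dim_{\mathbb{C}}H^{0}_{c}(D_{b},\mathcal{O}(D+D_{b})|_{D_{b}})+\dim_{\mathbb{C}}H^{1}_{c}(X,\mathcal{O}(D+D_{b})),
\end{gather*}
and shows the two canonical homomorphisms of the lemma have kernels canonically isomorphic to $H^{0}_{c}(D_{b},\mathcal{O}(D)|_{D_{b}})$, resp.\ $H^{0}_{c}(D_{b},\mathcal{O}(D+D_{b})|_{D_{b}})$. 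So everything reduces to controlling $H^{0}_{c}(D_{b},\mathcal{L}|_{D_{b}})$ for an invertible $\mathcal{O}_{X}$-module $\mathcal{L}$: it should be finite-dimensional, and it should vanish when $D_{b}$ has noncompact support.

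\textbf{Sections on $D_{b}$.} Because $D_{b}$ is reduced with (locally finite) irreducible components $\{D_{j}\}_{j\colon b_{j}=1}$, the restriction maps assemble into a sheaf monomorphism $\mathcal{O}_{D_{b}}\hookrightarrow\bigoplus_{j\colon b_{j}=1}(i_{j})_{*}\mathcal{O}_{D_{j}}$ (with $i_{j}\colon D_{j}\hookrightarrow D_{b}$); tensoring by the locally free $\mathcal{L}$ and passing to compactly supported sections yields an injection $H^{0}_{c}(D_{b},\mathcal{L}|_{D_{b}})\hookrightarrow\bigoplus_{j\colon b_{j}=1}H^{0}_{c}(D_{j},\mathcal{L}|_{D_{j}})$. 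For compact $D_{j}$ the term $H^{0}_{c}(D_{j},\mathcal{L}|_{D_{j}})=H^{0}(D_{j},\mathcal{L}|_{D_{j}})$ is finite-dimensional (Cartan--Serre); for noncompact $D_{j}$ a compactly supported section of the invertible $\mathcal{O}_{D_{j}}$-module $\mathcal{L}|_{D_{j}}$ over the irreducible variety $D_{j}$ vanishes (identity theorem again), so the term is $0$. Hence $H^{0}_{c}(D_{b},\mathcal{L}|_{D_{b}})$ is finite-dimensional, which with the inequalities above gives the two finiteness implications; and it is $0$ as soon as every irreducible component of $D_{b}$ is noncompact — the sense in which ``$D_{b}$ has noncompact support'' is to be read here — which gives the two monomorphism statements.

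\textbf{Main obstacle.} The essential point, and the only step that is not formal diagram chasing, is the last one: understanding compactly supported sections of an invertible sheaf on the possibly singular, possibly reducible divisor $D_{b}$. The passage to the prime components is exactly where $b_{j}\in\{0,1\}$ enters — it makes $D_{b}$ reduced, so the Mayer--Vietoris-type injection $\mathcal{O}_{D_{b}}\hookrightarrow\bigoplus(i_{j})_{*}\mathcal{O}_{D_{j}}$ is available — and the noncompactness of each component is precisely what forces the connecting maps $\partial$, $\partial'$ to vanish.
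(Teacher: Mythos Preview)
Your argument is correct and follows essentially the same route as the paper: the same short exact sequences tensored by $\mathcal{O}(D)$, then the long exact sequence in compactly supported cohomology, reducing everything to $\Gamma_{c}(D_{b},\mathcal{L}|_{D_{b}})$. Your componentwise analysis of this term (injecting into $\bigoplus_{j}H^{0}_{c}(D_{j},\mathcal{L}|_{D_{j}})$ and using Cartan--Serre on compact components, the identity theorem on noncompact ones) is in fact more careful than the paper's compact/noncompact dichotomy for $D_{b}$ as a whole, and your remark that ``noncompact support'' must be read as ``every irreducible component noncompact'' is exactly the point needed to make the monomorphism statement go through.
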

\begin{proof}
Consider the following canonical exact sequence: 
\begin{gather*}
0\to \mathcal{O}(-D_b)\to \mathcal{O}_{X}\to \mathcal{O}_{D_b}\to 0,
\end{gather*}
where $\mathcal{O}_{D_b}:=\mathcal{O}_{X}/\mathcal{O}(-D_b)$. 

Tensoring this sequence by $\mathcal{O}(D)$, we obtain the following exact sequence:  
\begin{gather*}
0\to \mathcal{O}(D-D_b)\to \mathcal{O}(D)\to \mathcal{O}_{D_b}(D)\to 0,
\end{gather*}
where $\mathcal{O}_{D_b}(D)=(\mathcal{O}_{X}/\mathcal{O}(-D_b))\otimes_{\mathcal{O}_{X}}\mathcal{O}(D)$. 

Let us note that $(D_b,\mathcal{O}_{D_b})$ is a reduced complex analytic space. If $(D_b,\mathcal{O}_{D_b})$ is a compact analytic space, then $\dim_{\mathbb{C}} \Gamma_{c}(X,\mathcal{O}_{D_b}(D))<\infty$. Hence $\dim_{\mathbb{C}} H^{1}_{c}(X,\mathcal{O}(D))<\infty$ implies $\dim_{\mathbb{C}} H^{1}_{c}(X,\mathcal{O}(D-D_{b}))<\infty$. If $(D_b,\mathcal{O}_{D_b})$ is a noncompact analytic space, then $\Gamma_{c}(X,\mathcal{O}_{D_b}(D))=0$. Hence the canonical homomorphism 
$$H^{1}_{c}(X,\mathcal{O}(D-D_b)\to H^{1}_{c}(X,\mathcal{O}(D))$$ is a monomorphism. 

For the second statement, we need to consider the following canonical exact sequence: 
\begin{gather*} 
0\to \mathcal{O}_{X}\to \mathcal{O}([D]_b)\to \mathcal{O}_{[D]_b}([D]_b)\to 0.
\end{gather*}
\end{proof}

\begin{corollary}\label{corbundltoholom}
Let $X$ be a noncompact complex manifold that has only one topological end, and let $L$ be a holomorphic line bundle over $X$. Assume that $\Gamma(\partial X, \mathcal{O}_{X})\neq \mathbb{C}$ and $\Gamma(\partial X, \mathcal{L})\neq 0$. If $\dim_{\mathbb{C}} H^{1}_{c}(X,\mathcal{L})<\infty$, then $\dim_{\mathbb{C}} H^{1}_{c}(X,\mathcal{O}_{X})<\infty$. In particular, $\mathcal{O}_{X}$ is Hartogs.
\end{corollary}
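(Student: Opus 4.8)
The plan is to reduce the statement about the line bundle $\mathcal{L}$ to a statement about $\mathcal{O}_X$ by first producing a nonzero global section and then using the divisor-filtration argument of Lemma \ref{lemmondiv} to descend from $\mathcal{O}(D)$ to $\mathcal{O}_X$. Concretely, the hypotheses $\Gamma(\partial X,\mathcal{O}_X)\neq\mathbb{C}$, $\Gamma(\partial X,\mathcal{L})\neq 0$, and $\dim_{\mathbb{C}}H^1_c(X,\mathcal{L})<\infty$ are exactly those of Lemma \ref{lemmonbund}, so that lemma gives a nonzero $\sigma\in\Gamma(X,\mathcal{L})$ with $\mathcal{L}\cong\mathcal{O}(D)$, where $D:=\operatorname{div}(\sigma)$ is an effective Cartier divisor. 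Thus $\dim_{\mathbb{C}}H^1_c(X,\mathcal{O}(D))<\infty$.

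Next I would run the reduction in Lemma \ref{lemmondiv}(1): writing $D=\sum_{j\in I}a_jD_j$ with $a_j>0$, the finiteness of $\dim_{\mathbb{C}}H^1_c(X,\mathcal{O}(D))$ implies $\dim_{\mathbb{C}}H^1_c(X,\mathcal{O}(D-D_b))<\infty$ for every choice of $b\in\{0,1\}^I$. Iterating, one peels off the components of $D$ one at a time (or all the reduced support at once via $D_{\mathrm{red}}$), decreasing the multiplicities until the divisor becomes $0$; at each step the finiteness hypothesis is preserved by Lemma \ref{lemmondiv}(1). After finitely many steps (note $\mathcal{F}$ has rank $1$, and although $I$ may a priori be infinite, only finitely many components meet any fixed compact set, and the local-finiteness of the divisor handles the rest; one applies the exact sequence $0\to\mathcal{O}(D-D_b)\to\mathcal{O}(D)\to\mathcal{O}_{D_b}(D)\to 0$ with $D_b$ a suitable finite or cofinite piece) we arrive at $\dim_{\mathbb{C}}H^1_c(X,\mathcal{O}_X)<\infty$. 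Finally, since $X$ is a noncompact normal (indeed nonsingular) complex analytic variety with only one topological end, Proposition \ref{lemmonholom} applies and yields that $\mathcal{O}_X$ is Hartogs.

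The main obstacle I anticipate is the descent step when the effective divisor $D=\operatorname{div}(\sigma)$ has infinitely many irreducible components: Lemma \ref{lemmondiv} is stated for a single subtraction $D\mapsto D-D_b$, so to reach $\mathcal{O}_X$ one must either argue that only finitely many components have noncompact support relevant to the cohomology, or set up an inductive/limiting argument over the locally finite family of components, checking that the finiteness of $H^1_c$ passes through the colimit $H^1_c(X,-)\cong\varinjlim_K H^1_K(X,-)$. In the compact-support-of-$D_b$ case the quotient term $\mathcal{O}_{D_b}(D)$ contributes a finite-dimensional $\Gamma_c$ and the induction is clean; in the noncompact case the relevant maps are monomorphisms, which is even better. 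Organizing this bookkeeping so that finitely many reduction steps suffice — possibly by first replacing $D$ with $D_{\mathrm{red}}$ and then observing $\mathcal{O}(D_{\mathrm{red}})\to$ quotient sequences terminate — is the one place where care is needed; everything else is a direct invocation of the lemmas already proved.
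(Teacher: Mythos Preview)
Your proposal is correct and mirrors the paper's proof exactly: invoke Lemma \ref{lemmonbund} to write $\mathcal{L}\cong\mathcal{O}(D)$ for an effective $D$, iterate Lemma \ref{lemmondiv}(1) by repeatedly subtracting $D_{\mathrm{red}}$, and finish with Proposition \ref{lemmonholom}. The infinite-components obstacle you flag is dispatched in the paper with a one-line appeal to local finiteness of the divisor, so your instinct about where the bookkeeping lives is right and no further idea is needed.
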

\begin{proof}
Lemma \ref{lemmonbund} implies that $\mathcal{L}=\mathcal{O}(D)$ for an effective Cartier divisor $D=\sum a_{i}D_{i}$, $a_{i}\in \mathbb{Z}_{\geq 1}$. Note that $D-D_{red}=\sum\limits_{i: a_{i}\geq 2}(a_{i}-1)D_{i}$. Lemma \ref{lemmondiv} implies that $\dim_{\mathbb{C}} H^{1}_{c}(X,\mathcal{O}(D-D_{red})<\infty.$

Further, for the divisor $D'=D-D_{red}$, we may repeat the same arguments. Since the sum $D=\sum a_{i}D_{i}$ is locally finite, it follows that $\dim_{\mathbb{C}} H^{1}_{c}(X,\mathcal{O}_{X})<\infty$. The last statement follows from Proposition \ref{lemmonholom}.
\end{proof}

\begin{remark}\label{corbundltoholom1}
Let $X$ be a noncompact complex manifold that has only one topological end, and let $L$ be a holomorphic line bundle over $X$. So, the conditions $\Gamma(\partial X, \mathcal{O}_{X})\neq \mathbb{C}$, $\Gamma(\partial X, \mathcal{L})\neq 0$, and $\dim_{\mathbb{C}} H^{1}_{c}(X,\mathcal{L})<\infty$ implies that $\Gamma(X,\mathcal{L})\neq 0$, $\Gamma(X,\mathcal{O}_{X})\neq\mathbb{C}$.
\end{remark}

\begin{definition}\label{defi22}
Let $\mathcal{F}$ be a coherent $\mathcal{O}_{X}$-module over a complex analytic variety $X$. An irreducible divisor $D$ of $X$ is called \textbf{$\mathcal{F}$-removable} if the restriction homomorphism 
$$\Gamma(X,\mathcal{F})\to \Gamma(X\setminus D,\mathcal{F})$$ is an epimorphism. 
\end{definition}

\begin{example}\label{removexample}
Each exceptional irreducible divisor is $\mathcal{F}$-removable for any locally free $\mathcal{O}_{X}$-module of finite rank $\mathcal{F}$. Let us remember that an irreducible divisor $D$ of a complex analytic variety $X$ is called \textbf{exceptional} if there exists a complex analytic variety $Y$ and a proper surjective holomorphic map $\phi\colon X\to Y$ such that $ \operatorname{codim} (\phi(D))\geq 2$ and $\phi\colon X\setminus D\to Y\setminus \phi(D)$ is a biholomorphism. 
\end{example}

\begin{proposition}\label{mainproponline}
Let $X$ be a noncompact complex manifold which has only one topological end, and let $L$ be a holomorphic line bundle over $X$. Assume that each compact irreducible Cartier divisor of $X$ is $\mathcal{L}$-removable, $\Gamma(\partial X, \mathcal{O}_{X})\neq \mathbb{C}$, and $\Gamma(\partial X, \mathcal{L})\neq 0$. If $\dim_{\mathbb{C}} H^{1}_{c}(X,\mathcal{L})<\infty,$ then the canonical homomorphism $\Gamma(X,\mathcal{L})\to \Gamma(\partial X,\mathcal{L})$ is an isomorphism. In particular, the sheaf $\mathcal{L}$ is Hartogs. 
\end{proposition}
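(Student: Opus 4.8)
The plan is to reduce everything to surjectivity of the restriction map $r\colon\Gamma(X,\mathcal{L})\to\Gamma(\partial X,\mathcal{L})$ and then run the Andreotti--Hill trick of Proposition \ref{lemmonholom}, the only genuinely new point being that at the end one is left with a meromorphic section of $L$ having a \emph{compact} polar divisor, which has to be cleared by means of the $\mathcal{L}$-removability hypothesis.

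First I would record the reductions. Since $\mathcal{L}$ is locally free of rank one over the complex manifold $X$, the identity theorem (Remark \ref{identlemma}) makes $r$ injective and makes $\Gamma(X\setminus K,\mathcal{L})\to\Gamma(\partial X,\mathcal{L})$ injective for every Hartogs pair $(K,X)$; so by Lemma \ref{mainlemma-1}(3) it suffices to prove $r$ surjective, and this will also give the last assertion exactly as in Lemma \ref{mainlemma1}. Lemma \ref{lemmonbund} lets me fix $\sigma\in\Gamma(X,\mathcal{L})\setminus\{0\}$ and write $\mathcal{L}\cong\mathcal{O}(D)$ with $D=\operatorname{div}(\sigma)$ effective; Corollary \ref{corbundltoholom} gives $\dim_{\mathbb{C}}H^{1}_{c}(X,\mathcal{O}_{X})<\infty$, so by Proposition \ref{lemmonholom} the ring $A:=\Gamma(X,\mathcal{O}_{X})$ maps isomorphically onto $\Gamma(\partial X,\mathcal{O}_{X})$, and by Remark \ref{corbundltoholom1} we have $A\neq\mathbb{C}$. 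I will then work with the exact sequence $0\to\Gamma(X,\mathcal{L})\xrightarrow{r}\Gamma(\partial X,\mathcal{L})\xrightarrow{c}H^{1}_{c}(X,\mathcal{L})\to\cdots$, set $m:=\dim_{\mathbb{C}}H^{1}_{c}(X,\mathcal{L})<\infty$, and show that $c$ vanishes.

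Next comes the Andreotti--Hill argument, adapted to $\mathcal{L}$. The connecting map $c$ is $A$-linear, because the whole construction in (\ref{eq2})--(\ref{eq3}) is compatible with the $\mathcal{O}_{X}$-module structure and $A\cong\Gamma(\partial X,\mathcal{O}_{X})$ acts by multiplication of sections at the boundary. Let $s\in\Gamma(\partial X,\mathcal{L})$ with $c(s)\neq0$ and pick a nonconstant $f\in A$ (possible since $A\neq\mathbb{C}$). The $m+1$ vectors $c(f^{i}s)=f^{i}c(s)$, $0\le i\le m$, are linearly dependent over $\mathbb{C}$, and — replacing $f$ by $af+b$ as in the proof of Proposition \ref{lemmonholom} — I obtain a nonzero polynomial $P\in\mathbb{C}[T]$ with $c(P(f)s)=0$; since $r$ is injective this produces a unique $\tau\in\Gamma(X,\mathcal{L})$ with $r(\tau)=P(f)s$, i.e. $\tau=P(f)\,s$ on $X\setminus K$ for a compact $K$ representing $s$. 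Now $P(f)\in A$ is not identically zero ($f$ is nonconstant, $X$ connected), so $t:=\tau/P(f)$ is a meromorphic section of $L$ over $X$, holomorphic off the hypersurface $\{P(f)=0\}$ and, agreeing with $s$ there, holomorphic on $X\setminus K$; hence its polar divisor $E$ is effective with $\operatorname{Supp}(E)\subseteq K\cap\{P(f)=0\}$, a closed and relatively compact, thus compact, analytic set. Since $X$ is smooth, $E$ is an effective Cartier divisor with compact support and every irreducible component of $\operatorname{Supp}(E)$ is a compact irreducible Cartier divisor, hence $\mathcal{L}$-removable by hypothesis. (One may also note that $t$ is the unique meromorphic section of $L$ on $X$ equal to $s$ near $\partial X$, so running the construction for varying $f$ forces $\operatorname{Supp}(E)$ into the locus where every element of $A$ is locally constant; I will not need this.) It then remains to extend $t$ across $\operatorname{Supp}(E)$ to an honest section $\widehat{s}\in\Gamma(X,\mathcal{L})$: for then $\widehat{s}=t=s$ on $X\setminus K$, so $r(\widehat{s})=s$, whence $c(s)=0$, $r$ is an isomorphism, and $\mathcal{L}$ is Hartogs as in Lemma \ref{mainlemma1}.

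The \emph{main obstacle is precisely this last extension step}. If $\operatorname{Supp}(E)$ is irreducible it is the $\mathcal{L}$-removability hypothesis verbatim. In general one writes $\operatorname{Supp}(E)=E_{1}\cup\cdots\cup E_{k}$ with the $E_{j}$ compact irreducible Cartier divisors and tries to extend $t$ one $E_{j}$ at a time; the subtlety is that removability of $E_{j}$ in $X$ concerns sections defined on all of $X\setminus E_{j}$, whereas $t$ is a priori defined only on $X\setminus(E_{1}\cup\cdots\cup E_{k})$, so one must first push $t$ across the remaining components. I plan to arrange this by choosing $f$ so that the components of $\operatorname{Supp}(E)$ lie in pairwise disjoint fibres of $f$ (separating them by suitable elements of $A$) and using that distinct fibres of $f$ are disjoint, thereby reducing to the irreducible case; producing such an $f$ — and dealing with the case where $A$ fails to separate two components of $\operatorname{Supp}(E)$ — is the delicate point, and is where additional care (or possibly a strengthening of the hypotheses) will be needed.
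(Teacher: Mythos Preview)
Your approach is exactly the paper's: reduce to $\mathcal{L}\cong\mathcal{O}(D)$ via Lemma~\ref{lemmonbund}, use Remark~\ref{corbundltoholom1} to secure $\Gamma(X,\mathcal{O}_X)\neq\mathbb{C}$, run the Andreotti--Hill trick to write an arbitrary $s\in\Gamma(\partial X,\mathcal{L})$ as $\sigma'/g$ with $\sigma'\in\Gamma(X,\mathcal{L})$ and $g\in\Gamma(X,\mathcal{O}_X)$ nonzero, observe that the resulting global meromorphic section has compact polar divisor, and then invoke $\mathcal{L}$-removability. The paper's proof terminates at precisely the point you flag, with the single sentence ``Since each compact irreducible Cartier divisor of $X$ is $\mathcal{O}(D)$-removable, it follows that $s'\in\Gamma(X,\mathcal{O}(D))$''; it does not spell out the passage from one irreducible component to several. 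So you are not overlooking a hidden argument---the paper simply asserts the conclusion.

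One comment on your proposed workaround: the meromorphic extension $t$ of $s$ is \emph{canonical}. Any two choices of $(f,P)$ produce global meromorphic sections of $\mathcal{L}$ that agree with $s$ on some $X\setminus K$, hence agree everywhere by the identity theorem for meromorphic sections; so the polar support $\operatorname{Supp}(E)$ is intrinsic to $s$ and cannot be rearranged into disjoint fibres by choosing $f$ cleverly. Running the trick over the whole finite-codimension ideal $I_s=\{a\in A:c(as)=0\}$ at best confines the poles to the common zero locus $V(I_s)$, but nothing forces this to have codimension~$\ge 2$. If you want to close this step cleanly you should argue directly with the meromorphic section (showing the pole order along each $E_j$ is zero), or---more honestly---strengthen the hypothesis to removability of finite unions of compact irreducible divisors, which is what is actually used and which holds in the motivating Example~\ref{removexample} of exceptional divisors.
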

\begin{proof}
We have the following long exact sequence: 
\begin{equation*}
	\xymatrix@C=0.5cm{
0 \ar[r] &  \Gamma(X, \mathcal{L}) \ar[rr]^{r} && \Gamma(\partial X, \mathcal{L}) \ar[rr]^{c} && H^{1}_{c}(X,\mathcal{L}) \ar[r] & \cdots }
\end{equation*}

Lemma \ref{lemmonbund} implies that there exists an effective Cartier divisor $D$ on $X$ such that $\mathcal{L}=\mathcal{O}(D)$. Remark \ref{corbundltoholom1} implies that $\Gamma(X,\mathcal{O}_{X})\neq \mathbb{C}$. 

As in the proof of Lemma \ref{lemmonbund}, for any $s\in \Gamma(\partial X, \mathcal{O}(D))$ (which is represented by a section $\sigma\in\Gamma(X\setminus K,\mathcal{O}(D))$ for a sufficiently large compact set $K\subset X$) there exists a global holomorphic function $g\in \Gamma(X,\mathcal{O}_{X})$ and there exists a global holomorphic section $\sigma'\in \Gamma(X,\mathcal{O}(D))$ such that $r(\sigma')=gs$. 

Consider the global meromorphic section $s'=\frac{\sigma'}{g}\in \Gamma(X,\mathcal{M}\otimes\mathcal{O}(D))$. Note that $$div(s')|_{X\setminus K}=div(\sigma)\geq 0$$ for a sufficiently large compact set $K\subset X$. Consider $div(s')=\sum a_{i}D_i$, where $D_{i}$ are irreducible Cartier divisors. If $a_{i}<0$, then $D_{i}$ is a compact set. It follows that the meromorphic section $s'$ has only compact poles. Since each compact irreducible Cartier divisor of $X$ is $\mathcal{O}(D)$-removable, it follows that $s'\in \Gamma(X,\mathcal{O}(D))$. The last statement follows from Lemma \ref{mainlemma-1}.
\end{proof}

\begin{theorem}\label{mainthsheaf}
Let $X$ be a noncompact complex manifold that has only one topological end, $\mathcal{F}\in Vect(\mathcal{O}_{X})$. Assume that each compact irreducible Cartier divisor of $X$ is $\mathcal{F}$-removable, $\Gamma(\partial X, \mathcal{O}_{X})\neq \mathbb{C}$, and $\Gamma(\partial X, \mathcal{F})\neq 0$. If $\dim_{\mathbb{C}} H^{1}_{c}(X,\mathcal{F})<\infty$, then the canonical homomorphism 
\begin{gather*}
\Gamma(X,\mathcal{F})\to \Gamma(\partial X,\mathcal{F})
\end{gather*}
 is an isomorphism. In particular, the sheaf $\mathcal{F}$ is Hartogs.
\end{theorem}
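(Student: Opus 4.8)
The plan is to reduce the statement to the case of holomorphic line bundles already settled in Proposition \ref{mainproponline}, using the projectivization construction of Section \ref{sectionreduxtoline}. Let $r$ be the rank of $\mathcal{F}$, let $\pi\colon \mathbb{P}(\mathcal{F})\to X$ be the associated projective fiber space, and let $L$ be the line bundle with $\mathcal{L}=\mathcal{O}_{\mathbb{P}(\mathcal{F})}(1)$. Since $X$ is a complex manifold, so is the total space $\mathbb{P}(\mathcal{F})$; it is connected, noncompact (as $\pi$ is surjective and $X$ is noncompact), and has exactly one topological end because $X$ does (Section \ref{sectionreduxtoline}). The isomorphisms recorded in Section \ref{sectionreduxtoline} give $H^{1}_{c}(\mathbb{P}(\mathcal{F}),\mathcal{L})\cong H^{1}_{c}(X,\mathcal{F})$, $\Gamma(\partial\mathbb{P}(\mathcal{F}),\mathcal{O}_{\mathbb{P}(\mathcal{F})})\cong\Gamma(\partial X,\mathcal{O}_{X})$, and $\Gamma(\partial\mathbb{P}(\mathcal{F}),\mathcal{L})\cong\Gamma(\partial X,\mathcal{F})$, so the three numerical hypotheses of Proposition \ref{mainproponline} hold for the pair $(\mathbb{P}(\mathcal{F}),L)$. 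Moreover $\pi_{*}\mathcal{L}\cong\mathcal{F}$ gives $\Gamma(\mathbb{P}(\mathcal{F}),\mathcal{L})\cong\Gamma(X,\mathcal{F})$, and by naturality of $\Gamma(\partial\mathbb{P}(\mathcal{F}),-)\cong\Gamma(\partial X,-)\circ\pi_{*}$ this is compatible with the canonical maps into the sections at boundary.

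The one hypothesis of Proposition \ref{mainproponline} that requires work is that every compact irreducible Cartier divisor $E$ of $\mathbb{P}(\mathcal{F})$ is $\mathcal{L}$-removable; this is the heart of the argument. First I would show $E=\pi^{-1}(A)$ for a compact irreducible divisor $A$ of $X$. The image $A:=\pi(E)$ is a compact analytic subset of $X$ (Remmert's proper mapping theorem, $\pi$ proper), and it is a proper subset of $X$ since $X$ is noncompact, hence $\dim A\le\dim X-1$; on the other hand the fibers of $\pi|_{E}$ are contained in the $\mathbb{P}_{r-1}$-fibers of $\pi$, so $\dim A\ge\dim E-(r-1)=\dim X-1$, whence $\dim A=\dim X-1$. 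As $\pi(E)$ is topologically irreducible, $A$ is an irreducible divisor of $X$, automatically Cartier because $X$ is a complex manifold. Then $E\subseteq\pi^{-1}(A)$, and $\pi^{-1}(A)$ is irreducible of dimension $\dim X+r-2=\dim E$ (being a $\mathbb{P}_{r-1}$-bundle over the irreducible $A$), so $E=\pi^{-1}(A)$. Consequently $\mathbb{P}(\mathcal{F})\setminus E=\pi^{-1}(X\setminus A)$, and since $\pi_{*}$ commutes with restriction to open subsets and $\pi_{*}\mathcal{L}\cong\mathcal{F}$, the restriction map $\Gamma(\mathbb{P}(\mathcal{F}),\mathcal{L})\to\Gamma(\mathbb{P}(\mathcal{F})\setminus E,\mathcal{L})$ is identified with $\Gamma(X,\mathcal{F})\to\Gamma(X\setminus A,\mathcal{F})$, which is surjective because $A$ is a compact irreducible Cartier divisor of $X$, hence $\mathcal{F}$-removable by hypothesis. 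Thus $E$ is $\mathcal{L}$-removable.

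With all hypotheses verified, Proposition \ref{mainproponline} applied to $(\mathbb{P}(\mathcal{F}),L)$ yields that $\Gamma(\mathbb{P}(\mathcal{F}),\mathcal{L})\to\Gamma(\partial\mathbb{P}(\mathcal{F}),\mathcal{L})$ is an isomorphism; transporting along the compatible isomorphisms above, $\Gamma(X,\mathcal{F})\to\Gamma(\partial X,\mathcal{F})$ is an isomorphism, which is the first assertion. For the final claim, let $(K,X)$ be any Hartogs pair: the canonical map $\Gamma(X,\mathcal{F})\to\Gamma(\partial X,\mathcal{F})$ is an epimorphism (being an isomorphism), while $\Gamma(X\setminus K,\mathcal{F})\to\Gamma(\partial X,\mathcal{F})$ is a monomorphism by the identity theorem for locally free $\mathcal{O}_{X}$-modules of finite rank (Remark \ref{identlemma}), so Lemma \ref{mainlemma-1}(3) gives that $\mathcal{F}$ is Hartogs w.r.t. $(K,X)$; since $(K,X)$ was arbitrary, $\mathcal{F}$ is Hartogs. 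I expect the main obstacle to be precisely the classification of compact irreducible divisors of $\mathbb{P}(\mathcal{F})$ (the dimension bookkeeping and the irreducibility of $\pi^{-1}(A)$ when $A$ is singular); everything else is assembling the isomorphisms of Section \ref{sectionreduxtoline} and quoting Proposition \ref{mainproponline} and Lemma \ref{mainlemma-1}.
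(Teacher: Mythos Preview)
Your proof is correct and follows the same overall strategy as the paper: pass to $\pi\colon\mathbb{P}(\mathcal{F})\to X$ with $\mathcal{L}=\mathcal{O}_{\mathbb{P}(\mathcal{F})}(1)$, transfer the hypotheses via the isomorphisms of Section~\ref{sectionreduxtoline}, verify that every compact irreducible divisor of $\mathbb{P}(\mathcal{F})$ is $\mathcal{L}$-removable, and then invoke Proposition~\ref{mainproponline}. The only substantive difference is in how you establish $\mathcal{L}$-removability of a compact irreducible divisor $E\subset\mathbb{P}(\mathcal{F})$. You prove the sharp statement $E=\pi^{-1}(A)$ with $A=\pi(E)$ an irreducible divisor of $X$, via the dimension count $\dim A\geq\dim E-(r-1)=\dim X-1$ and irreducibility of the $\mathbb{P}_{r-1}$-bundle $\pi^{-1}(A)$; this rules out $\operatorname{codim}\pi(E)>1$ entirely and identifies the two restriction maps directly. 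The paper instead does not claim $E=\pi^{-1}(\pi(E))$: it treats $\operatorname{codim}\pi(E)>1$ and $=1$ separately (using the second Riemann extension theorem in the former case, the $\mathcal{F}$-removability hypothesis in the latter) to get that $r_1\colon\Gamma(\mathbb{P}(\mathcal{F}),\mathcal{L})\to\Gamma(\mathbb{P}(\mathcal{F})\setminus\pi^{-1}(\pi(E)),\mathcal{L})$ is an isomorphism, and then factors $r_1=r_3\circ r_2$ with $r_2,r_3$ injective by the identity theorem to conclude $r_2$ is an isomorphism. Your route is a bit more economical (no case split, no appeal to Serre's extension theorem), while the paper's route sidesteps the irreducibility of $\pi^{-1}(A)$ that you flagged as the delicate point; both are short and valid.
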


\begin{proof}
Consider the projective fiber bundle $\pi\colon\mathbb{P}(\mathcal{F})\to X$. The following conditions hold: 

\begin{enumerate}
\item $\mathbb{P}(\mathcal{F})$ has only one topological end. 
\item $\Gamma(\partial\mathbb{P}(\mathcal{F}), \mathcal{O}_{\mathbb{P}(\mathcal{F})})\neq \mathbb{C}$.
\item $\Gamma(\partial\mathbb{P}(\mathcal{F}), \mathcal{O}_{\mathbb{P}(\mathcal{F})}(1))\neq 0$.
\item $\dim_{\mathbb{C}} H^{1}_{c}(\mathbb{P}(\mathcal{F}), \mathcal{O}_{\mathbb{P}(\mathcal{F})}(1))<\infty$.
\end{enumerate}

If $D$ is a compact irreducible Cartier divisor of $\mathbb{P}(\mathcal{F})$, then $\pi(D)$ is a compact subset. Since $X$ is noncompact, then $\pi(D)\neq X$. The Remmert's proper mapping theorem implies that $\pi(D)$ is an analytic subset of $X$. We have the following commutative diagram of the restriction homomorphisms:

\[\begin{diagram} 
\node{\Gamma(X,\mathcal{F})} \arrow{e,t}{r} \arrow{s,r}{\cong} 
\node{\Gamma(X\setminus \pi(D), \mathcal{F})} \arrow{s,r}{\cong}
\\
\node{\Gamma(\mathbb{P}(\mathcal{F}),\mathcal{O}_{\mathbb{P}(\mathcal{F}}(1))} \arrow{e,t}{r_{1}} \arrow{se,r}{r_{2}} 
\node{\Gamma(\mathbb{P}(\mathcal{F})\setminus \pi^{-1}(\pi(D)),\mathcal{O}_{\mathbb{P}(\mathcal{F}}(1))} 
\\
\node[2]{\Gamma(\mathbb{P}(\mathcal{F})\setminus D, \mathcal{O}_{\mathbb{P}(\mathcal{F}}(1))}\arrow{n,r}{r_3}\\
\end{diagram}\]

If $ \operatorname{codim} (\pi(D))>1$, then $r$ is an isomorphism (by the Serre theorem \cite[Chapter II, Theorem 5.29]{Grauert}). If $ \operatorname{codim} (\pi(D))=1$, then $r$ is an isomorphism by assumption. In both cases, we obtain that $r_{1}$ is an isomorphism. 

Since $r_{2}, r_{3}$ are monomorphisms, it follows that $r_2$ is an isomorphism. Hence each compact irreducible Cartier divisor of $\mathbb{P}(\mathcal{F})$ is $\mathcal{O}_{\mathbb{P}(\mathcal{F}}(1)$-removable. Proposition \ref{mainproponline} implies that the restriction homomorphism $$\Gamma(\mathbb{P}(\mathcal{F}), \mathcal{O}_{\mathbb{P}(\mathcal{F})}(1))\to \Gamma(\partial\mathbb{P}(\mathcal{F}), \mathcal{O}_{\mathbb{P}(\mathcal{F})}(1))$$ is an isomorphism. In particular, $\mathcal{O}_{\mathbb{P}(\mathcal{F})}(1)$ is Hartogs. This concludes the proof.
\end{proof}

\begin{example}\noindent
Since each compact irreducible Cartier divisor of a 1-convex complex manifold is exceptional, it follows that it is $\mathcal{F}$-removable for any locally free $\mathcal{O}_{X}$-module of finite rank $\mathcal{F}$. By \cite[Proposition 1]{Viorel2}, it follows that $\dim_{\mathbb{C}} H^{1}_{c}(X,\mathcal{F})<\infty$ for any locally free $\mathcal{O}_{X}$-module of finite rank. Hence $\mathcal{F}$ is Hartogs.  
\end{example}

\newpage
\section{$(b,\sigma)$-Compactified pairs}\label{sec3}

\subsection{Fibered categories}
In this section, we consider categories that are obtained via the following Grothendieck construction (see, for instance, \cite{Vistoli, Grothendieck}): let $B$ be a small category, $\mathbf{Cat}$ be the category of all (small) categories, and $T\colon B\to \mathbf{Cat}^{op}$ be a (contravariant)  pseudo-functor. We form the following category, which is denoted by $B_{T}$. 
\begin{enumerate}
\item Objects of $B_{T}$: $Ob(B_T):=\{(x,y)\mid x\in Ob(B), y\in Ob(Tx)\}$; 
\item Morphisms of $B_T$: for any $(x,y),(x',y')\in Ob(B_T)$ define 
\begin{gather*}
Hom_{B_T}((x,y),(x',y')):=\{(f,g)\mid f\in Hom_{B}(x,x'), g\in Hom_{Tx}((Tf)(y'),y)\}
\end{gather*}
\item Composition of morphisms: 
\begin{gather*}
\circ\colon Hom_{B_T}((x,y),(x',y'))\times Hom_{B_T}((x',y'),(x'',y''))\to Hom_{B_T}((x,y),(x'',y'')),\\ (f',g')\circ (f,g):= (f'\circ f, g\circ (Tf)(g')).
\end{gather*}
\item Identity: for any $(x,y)\in Ob(B_T)$ define $1_{(x,y)}=(1_{x},1_{y})$, where $1_{x}$ is the identity morphism of $x\in Ob(B)$, $1_{y}$ is the identity morphism of $y \in Ob(Tx)$. 
\end{enumerate}

It is easy to show that we indeed obtain a category. Note that the morphism 
\begin{gather*}
(f,g)\colon (x,y)\to (x',y')
\end{gather*}
 is an isomorphism if and only if $f\colon x \to x'$ is an isomorphism and $g\colon (Tf)(y')\to y$ is an isomorphism such that $g\circ (Tf)(g^{-1})=1_{y}, g^{-1}\circ (Tf^{-1})(g)=1_{y'}.$ 
 
\begin{example}
The category $Top_{Sh}$, where $Top$ is the category of topological spaces and $Sh$ is the following pseudo-functor (indeed, it is a functor): each $X\in Top$ corresponds to the category $Sh(X)$ (i.e., the category of sheaves of abelian groups over $X$), and each continuous map $f\colon X\to Y$ corresponds to the inverse image functor $Sh(f):=f^{-1}\colon Sh(Y)\to Sh(X)$. 
\end{example}

\subsection{Fibered categories which are considered and $(b,\sigma)$-compactified pairs}

\begin{definition}\label{remoncategory}
We list below the pairs $(B,T)$ such that the corresponding categories $B_{T}$ are considered.
\begin{enumerate}  
\item Categories: complex analytic varieties $An$; normal complex analytic varieties $An_{norm}$; complex analytic manifolds $An_{sm}$. Pseudo-functors:
\begin{enumerate}
\item $Sh_{\mathbb{C}}$: each object $(X,\mathcal{O}_{X})$ corresponds to the category $Sh_{\mathbb{C}}(X)$ of sheaves of $\mathbb{C}$-vector spaces over $X$; each morphism $(f,f^{\#})\colon (X,\mathcal{O}_{X})\to (Y,\mathcal{O}_{Y})$ (here $f\colon X\to Y$ is a continuous map, $f^{\#}\colon f^{-1}\mathcal{O}_{Y}\to \mathcal{O}_{X}$ is a morphism of sheaves) corresponds to the inverse image functor $$f^{-1}\colon Sh_{\mathbb{C}}(Y)\to Sh_{\mathbb{C}}(X).$$ 
\item $Id$: each object $(X,\mathcal{O}_{X})$ corresponds to the structure sheaf $\mathcal{O}_{X}$ considered as the category with one object and one morphism (identity); each morphism $(f,f^{\#})\colon (X,\mathcal{O}_{X})\to (Y,\mathcal{O}_{Y})$ corresponds to the functor $Id(f,f^{\#})\colon \mathcal{O}_{Y}\to \mathcal{O}_{X}$ defined via the canonical isomorphism $$f^{\#}\otimes_{f^{-1}\mathcal{O}_{Y}}\mathcal{O}_{X}\colon f^{-1}\mathcal{O}_{Y}\otimes_{f^{-1}\mathcal{O}_{Y}}\mathcal{O}_{X}\cong \mathcal{O}_{X}.$$  
\item $Coh$: each object $(X,\mathcal{O}_{X})$ corresponds to the category $Coh(\mathcal{O}_{X})$ of coherent $\mathcal{O}_{X}$-modules; each morphism $(f,g)\colon (X,\mathcal{O}_{X})\to (Y,\mathcal{O}_{Y})$ corresponds to the analytic inverse image functor 
\begin{gather*}
f^{*}\colon Coh(\mathcal{O}_{Y})\to Coh(\mathcal{O}_{X}), \mathcal{F}\mapsto f^{*}\mathcal{F}=f^{-1}\mathcal{F}\otimes_{f^{-1}\mathcal{O}_{Y}}\mathcal{O}_{X}.
\end{gather*}
\item $Vect$: each object $(X,\mathcal{O}_{X})$ corresponds to the category $Vect(\mathcal{O}_{X})$ of locally free $\mathcal{O}_{X}$-modules of finite ranks; each morphism $(f,g)\colon (X,\mathcal{O}_{X})\to (Y,\mathcal{O}_{Y})$ corresponds to the analytic inverse image functor $$f^{*}\colon Vect(\mathcal{O}_{Y})\to Vect(\mathcal{O}_{X}).$$ 
\end{enumerate}
\item Categories: $Al^{a}, Al_{norm}^{a}, Al_{sm}^{a}$ which are analytifications of the categories of complex algebraic varieties $Al$, of normal complex algebraic varieties $Al_{norm}$, of nonsingular complex algebraic varieties $Al_{sm}$, respectively; Pseudo-functors: $Sh_{\mathbb{C}}, Id, Coh, Vect$. 
\item Category of complex analytic $G$-varieties $An_{G}$ (here $G$ is a complex Lie group); Pseudo-functor: $Id$.
\item Categories: $Al_{G}^{a}, Al_{G,norm}^{a}, Al_{G, sm}^{a}$ (here $G$ is a complex algebraic group) which are analytifications of the categories of complex algebraic $G$-varieties, of normal complex algebraic $G$-varieties, of complex algebraic $G$-manifolds, respectively; Pseudo-functor: $Id$. 
\end{enumerate} 
\end{definition}

\begin{remark}
Let $B$ be as in Definition \ref{remoncategory}. The category $B_{Id}$ is equivalent to $B$, and we denote $B_{Id}$ by $B$.
\end{remark}

\begin{definition}\label{Defibsigma}
Fix a category $B_T$ as in Definition \ref{remoncategory}. Let $(X,\mathcal{F})$ be an object of $B_T$ such that $X$ is noncompact, and let $b,\sigma\in [0,\infty]$.
\begin{itemize}
\item The pair $(X,\mathcal{F})$ is called \textbf{$(b,\sigma)$-compactified} by the morphism $$(i,k)\colon (X, \mathcal{F})\to (X',\mathcal{F}')$$ in the category $B_T$ if the following conditions holds:
\begin{enumerate}
\item $X'$ is compact.
\item $i\colon X\hookrightarrow X'$ is an open immersion (i.e., topological open immersion with $i^{-1}\mathcal{O}_{X'}\cong \mathcal{O}_{X}$).
\item $k\colon (Ti)(\mathcal{F}')\to \mathcal{F}$ is an isomorphism.
\item $X'\setminus i(X)$ is a proper analytic set that has only $b$ connected components.
\item $\dim_{\mathbb{C}}H^{1}(X',\mathcal{F}')=\sigma$.
\end{enumerate}
\item If $T=Id$, then we say that $X$ is \textbf{$(b,\sigma)$-compactified} by the morphism $i\colon X\hookrightarrow X'$. 
\item We say that $X$ is \textbf{$(b,\sigma)$-compactifiable} if $X$ is $(b,\sigma)$-compactified by some morphism $i\colon X\hookrightarrow X'$.
\end{itemize}
\end{definition}

\begin{remark}
The notion of compactifiable complex manifolds may have first arisen in Kawamata's paper \cite{Kaw} in the deformation theory of non-compact manifolds context. See also \cite{BALLICO} for more about this context. The questions are about a classification of compactifiable manifolds considered in Enoki's paper \cite{Enoki} and, for instance, Vo Van Tan's papers \cite{Tan1, Tan2, Tan3}.
\end{remark}

\begin{example}
Let $X=\mathbb{C}^{*}$ and $\mathcal{F}=\mathcal{O}_{\mathbb{C}^{*}}$ be the sheaf of holomorphic functions. In the category $An_{Sh_{\mathbb{C}}}$, we can compactify $X$ in the following ways.
\begin{enumerate}
\item The pair $(\mathbb{C}^{*}, \mathcal{O}_{\mathbb{C}^{*}})$ is $(2,\infty)$-compactified by the morphism 
\begin{gather*}
(i,k)\colon (\mathbb{C}^{*}, \mathcal{O}_{\mathbb{C}^{*}})\to (\mathbb{CP}^{1},i_{!}\mathcal{O}_{\mathbb{C}^{*}}),
\end{gather*} where $i\colon \mathbb{C}^{*}\hookrightarrow \mathbb{CP}^{1}$ is the canonical open immersion, $k\colon i^{-1}i_{!}\mathcal{O}_{\mathbb{C}^{*}}\cong \mathcal{O}_{\mathbb{C}^{*}}$ is the canonical isomorphism. Indeed, we have the following short exact sequence 
\begin{gather*}
0\to \mathbb{C}\to \mathcal{O}_{\mathbb{CP}^{1},0}\oplus \mathcal{O}_{\mathbb{CP}^{1},\infty}\to H^{1}_{c}(\mathbb{C}^{*},\mathcal{O}_{\mathbb{C}^{*}})=H^{1}(\mathbb{CP}^{1}, i_{!}\mathcal{O}_{\mathbb{C}^{*}})\to 0.
\end{gather*}
It follows that $\sigma=\infty$.
\item The pair $(\mathbb{C}^{*}, \mathcal{O}_{\mathbb{C}^{*}})$ is $(2,\sigma)$-compactified by the morphism 
\begin{gather*}
(i,k)\colon (\mathbb{C}^{*}, \mathcal{O}_{\mathbb{C}^{*}})\to (\mathbb{CP}^{1},\mathcal{O}_{\mathbb{CP}^{1}}(n)),
\end{gather*} 
where $i\colon \mathbb{C}^{*}\hookrightarrow \mathbb{CP}^{1}$ is the canonical open immersion, $k\colon i^{-1}\mathcal{O}_{\mathbb{CP}^{1}}(n)\cong \mathcal{O}_{\mathbb{C}^{*}}$ is a trivialization isomorphism, and 
\begin{gather*}
\sigma=\begin{cases}
0, & \text{if $n\geq -1$;} \\
-n-1, & \text{if $n<-1$.}
\end{cases}
\end{gather*}

\item The pair $(\mathbb{C}^{*}, \mathcal{O}_{X})$ is $(1,1)$-compactified by the morphism 
\begin{gather*}
(i,k)\colon (\mathbb{C}^{*}, \mathcal{O}_{\mathbb{C}^{*}})\to (X',\mathcal{O}_{X'}),
\end{gather*} 
where $X' \subset \mathbb{CP}^{2}$ is defined by the equation $z_{1}^{3}+z_{2}^{3}-z_0 z_1 z_2=0$, $i\colon \mathbb{C}^{*}\hookrightarrow X'$ is the canonical open immersion, and $k\colon i^{-1}\mathcal{O}_{X'}\cong \mathcal{O}_{\mathbb{C}^{*}}$ is the canonical isomorphism. 

Recall that the sheaf of ideals of $X'$ and the canonical sheaf of $\mathbb{CP}^{2}$ are isomorphic to $\mathcal{O}_{\mathbb{CP}^{2}}(-3H)$. The short exact sequence $0\to \mathcal{I}\to\mathcal{O}_{\mathbb{CP}^{2}}\to \mathcal{O}_{\mathbb{CP}^{2}}/\mathcal{I}\to 0$, the Serre duality, and $H^{i}(\mathbb{CP}^{2},\mathcal{O}_{\mathbb{CP}^{2}})=0,\forall i>0$ implies that 
\begin{multline*}
\sigma=\dim_{\mathbb{C}}(H^{1}(\mathbb{CP}^{2},\mathcal{O}_{\mathbb{CP}^{2}}/\mathcal{I}))=\dim_{\mathbb{C}}(H^{2}(\mathbb{CP}^{2},\mathcal{I}))=\\=\dim_{\mathbb{C}}(H^{0}(\mathbb{CP}^{2}, \mathcal{I}^{*}\otimes K_{\mathbb{CP}^{2}}))=\dim_{\mathbb{C}}(H^{0}(\mathbb{CP}^{2}, \mathcal{O}_{\mathbb{CP}^{2}})=1
\end{multline*}
\end{enumerate}
\end{example}

\begin{example}
Fix a category $B_T$ as in Definition \ref{remoncategory}. If $(X,\mathcal{F})$ is $(b,\sigma)$-compactified by a morphism $(X,\mathcal{F})\to (X',\mathcal{F}')$, then for any $k\in\mathbb{Z}$ the pair $(X,\mathcal{F})$ is $(b,\sigma')$-compactified by a morphism $(X,\mathcal{F})\to (X,\mathcal{F}'\otimes\mathcal{I}_{Z}^{\otimes k})$, where $\mathcal{I}_{Z}$ is the sheaf of ideals of $Z$ and $\sigma'=\dim_{\mathbb{C}} H^{1}(X',\mathcal{F}'\otimes\mathcal{I}_{Z}^{\otimes k})$.

Assume that $B$ is $An_{Coh}$ or $An_{Vect}$. Assume that $Z:=X'\setminus X$ is connected and the support of a divisor $D$. Let $\mathcal{O}_{X'}(D)$ be the corresponding line bundle, which is positive. The Grauert vanishing theorem (\cite[Chapter VI, Theorem 4.3]{Grauert}) implies that if $(X,\mathcal{F})$ is $(1,\sigma)$-compactified by a morphism $(X,\mathcal{F})\to (X',\mathcal{F}')$, then $(X,\mathcal{F})$ is $(1,0)$-compactified by a morphism $(X,\mathcal{F})\to (X',\mathcal{F}'\otimes\mathcal{O}_{X'}(kD))$ for some $k\in\mathbb{Z}_{\geq 0}$.

Now, fix the category $(An_{sm})_{Vect}$ and let $Z:=X'\setminus X$ be connected and the support of a divisor $D$. If $(X,\mathcal{F})$ is $(1,\sigma)$-compactified by a morphism $(X,\mathcal{F})\to (X',\mathcal{F}')$, then $(X,\mathcal{F})$ is $(1,\sigma(k))$-compactified by a morphism $(X,\mathcal{F})\to (X',\mathcal{F}'\otimes\mathcal{O}_{X'}(kD))$ where $\sigma(k)$ is a function which has some asymptotic inequalities as $k\to +\infty$ (see the weak Morse inequalities \cite[Section 8]{Dem}). In particular, if $D$ is nef and $X'$ is K\"ahler, then $\sigma(k)=o(k^{\dim X'})$. 
\end{example}

\subsection{On $(b,\sigma)$-compactifiable varieties}\label{sectoncompact}
In this subsection, we give any remarks on numbers $(b,\sigma)$ for the categories $B$ as in Definition \ref{remoncategory}.

Note that the number $b$ is related to the topological ends of $X$ (see \cite{Freudenthal, Peschke} or Remark \ref{topologicalend} for more details about topological ends). Suppose $X$ has $e(X)$ topological ends. In general, $b\leq e(X)$, but in the category $An_{norm}$ we have equality $b=e(X)$; in fact, the Riemann extension theorem implies that if $X$ is a connected normal space, then for each thin set $A$ in $X$ the space $X\setminus A$ is connected (\cite[Chapter 1, \S 13]{Grauert}). It follows that the number $b$ does not depend on compactifications of $X$ in $An_{norm}$. 

In the category $An_{sm}$, the number $\sigma$ is a bimeromorphic invariant. Namely, if $X$ is $(b,\sigma')$-compactified by $X\hookrightarrow X'$, $(b,\sigma'')$-compactified by $X\hookrightarrow X''$ in the category $An_{sm}$, and $X'$ and $X''$ are bimeromorphically equivalent, then $\sigma'=\sigma''$ (see, for instance, \cite[Corollary 1.4]{RaoSong}). 

The Nagata theorem \cite{Nagata} implies that each normal complex algebraic variety admits a compactification in the category $Al_{norm}^{a}$. By the resolution of singularities theorem, it follows that each complex algebraic manifold admits a compactification in the category $Al_{sm}^{a}$.

In the category $Al_{sm}^{a}$, the number $\sigma$ coincides with the dimension of the Albanese variety (see \cite{SerreAlbanes} for more details about Albanese varieties). Recall that if $X'$ is a compact complex algebraic manifold and $ \operatorname{Alb}(X')$ is the Albanese variety of $X'$, then 
\begin{gather*}
\dim_\mathbb{C}  \operatorname{Alb}(X')=\dim_\mathbb{C} \Gamma (X',\Omega^{1}_{X'})=\dim_\mathbb{C} H^{1}(X',\mathcal{O}_{X'}).
\end{gather*}

Now if $X$ is $(b,\sigma)$-compactified by $X\hookrightarrow X'$ in $Al_{sm}^{a}$, then $\sigma=\dim_{\mathbb{C}}  \operatorname{Alb}(X')$. Moreover, since the Albanese variety is a birational invariant, it follows that $\sigma=\dim_{\mathbb{C}}  \operatorname{Alb}(X)$.

By the remarks above, it follows that the numbers $(b,\sigma)$ do not depend on any compactification in the category $Al_{sm}^{a}$. Namely, we obtain the following proposition:

\begin{proposition}\label{prop27}
A complex algebraic manifold is $(b,\sigma)$-compactifiable in $Al_{sm}^{a}$ if and only if $X$ has $b$ topological ends and $\dim_{\mathbb{C}}  \operatorname{Alb}(X)=\sigma$.
\end{proposition}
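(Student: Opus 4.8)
The plan is to establish the two directions of the equivalence separately, relying on the three ingredients that the excerpt has already set up in Subsection~\ref{sectoncompact}: (i) in the category $An_{norm}$ (hence in $Al^{a}_{sm}$) the number $b$ equals the number of topological ends $e(X)$, because the Riemann extension theorem forces $X'\setminus Z$ to remain connected whenever $Z\subset X'$ is a proper analytic subset with a given number of connected components; (ii) for a compact complex algebraic manifold $X'$ one has $\dim_{\mathbb C}H^{1}(X',\mathcal O_{X'})=\dim_{\mathbb C}\operatorname{Alb}(X')$; and (iii) the Albanese variety is a birational invariant. So the whole statement is really a bookkeeping assertion that the invariants $(b,\sigma)$ attached to an arbitrary smooth algebraic compactification do not actually depend on the compactification.

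First I would prove the ``only if'' direction. Suppose $X$ is $(b,\sigma)$-compactified in $Al^{a}_{sm}$ by an open immersion $i\colon X\hookrightarrow X'$ with $\mathcal F'=i_{!}\mathcal O_{X}$-type extension replaced here by the structure-sheaf case $T=Id$, so $\mathcal F'=\mathcal O_{X'}$. By Definition~\ref{Defibsigma}, $X'$ is a compact complex algebraic manifold, $Z:=X'\setminus i(X)$ is a proper analytic subset with exactly $b$ connected components, and $\sigma=\dim_{\mathbb C}H^{1}(X',\mathcal O_{X'})$. By ingredient~(ii), $\sigma=\dim_{\mathbb C}\operatorname{Alb}(X')$, and by ingredient~(iii), $\operatorname{Alb}(X')\cong\operatorname{Alb}(X)$ since $X$ is a dense Zariski-open subset of $X'$ (an open immersion of algebraic varieties is birational onto its image); hence $\sigma=\dim_{\mathbb C}\operatorname{Alb}(X)$. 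For the topological ends: since $Z$ is a proper analytic subset of the connected normal (indeed smooth) space $X'$ and has $b$ connected components, a standard local analysis near each component $Z_{j}$ of $Z$ — using that a small deleted neighbourhood of an analytic subset of codimension $\ge 1$ in a manifold is connected — shows that the filter of complements $X\setminus K$ of large compacts $K$ has exactly $b$ limit components, i.e.\ $e(X)=b$. This is precisely the identification already asserted in the text; I would cite \cite{Freudenthal, Peschke} and Remark~\ref{topologicalend} for the formal equivalence ``$e(X)=$ number of connected components of $X'\setminus X$ in a normal compactification''.

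For the ``if'' direction, suppose $X$ is a complex algebraic manifold with $e(X)=b$ and $\dim_{\mathbb C}\operatorname{Alb}(X)=\sigma$. By the resolution of singularities theorem (as recalled in the excerpt), $X$ admits \emph{some} compactification $X\hookrightarrow X'$ in $Al^{a}_{sm}$; let $(b',\sigma')$ be the resulting pair. By the ``only if'' direction just proved, $b'=e(X)=b$ and $\sigma'=\dim_{\mathbb C}\operatorname{Alb}(X)=\sigma$, so this $X'$ witnesses that $X$ is $(b,\sigma)$-compactifiable. This closes the equivalence.

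The main obstacle is the purely topological claim $b=e(X)$, i.e.\ that the number of connected components of $X'\setminus X$ for a smooth (hence normal) compactification equals the number of topological ends of $X$. One direction is the Riemann-extension-type argument already invoked in the text (removing a thin/analytic set from a connected normal space leaves it connected, so distinct ends of $X$ cannot merge in $X'$ and distinct components of $Z$ cannot be ``seen'' by the same end); the other direction requires that each component $Z_{j}$ of $Z$ has a fundamental system of connected punctured neighbourhoods in $X'$, which follows from local irreducibility/normality together with the fact that $Z$ has positive codimension. I would treat this as essentially the content of Remark~\ref{topologicalend} plus the cited references, and spell out only the reduction: fix an exhaustion $\{K_{n}\}$ of $X$ by compacts whose complements $X\setminus K_{n}$ eventually lie inside a fixed neighbourhood $U$ of $Z$ in $X'$ with $U\setminus Z$ having $b$ components; then $\pi_{0}(X\setminus K_{n})\to\pi_{0}(U\setminus Z)$ is eventually a bijection, giving $e(X)=\varprojlim\pi_{0}(X\setminus K_{n})=b$. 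Everything else is a direct invocation of the already-quoted facts about Albanese varieties and their birational invariance.
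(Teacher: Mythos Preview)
Your proposal is correct and follows essentially the same route as the paper: the proposition is stated there as a direct consequence of the preceding paragraph, which assembles exactly your three ingredients (Riemann extension gives $b=e(X)$ in the normal/smooth case, $\dim H^{1}(X',\mathcal O_{X'})=\dim\operatorname{Alb}(X')$, and birational invariance of the Albanese), together with Nagata plus resolution of singularities for the existence of a smooth compactification. Your write-up is more explicit than the paper---in particular you spell out the cofinal-exhaustion argument for $e(X)=b$ that the paper only gestures at---but the strategy is identical.
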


\begin{remark}\label{RemonSumi}
By the Sumihiro theorem \cite[Theorem 3]{Sumihiro}, it follows that if $G$ is a connected complex linear algebraic group, then each noncompact normal complex algebraic $G$-variety admits a compactification in the category $Al_{G,norm}^{a}$ (i.e., a $G$-equivariant compactification). By Brion's results \cite{Brion2}, it follows that if $G$ is an arbitrary connected complex algebraic group, then each noncompact normal quasiprojective complex algebraic $G$-variety admits a compactification in the category $Al_{G,norm}^{a}$.
\end{remark}
\subsection{Almost homogeneous algebraic G-manifolds}\label{sectionalmost}
A complex analytic $G$-variety $X$ is called \textbf{almost homogeneous} if $X$ has an open $G$-orbit (which we denote by $\Omega$). Note that an open $G$-orbit $\Omega$ is unique and connected, and $E:=X\setminus \Omega$ is a proper analytic subset \cite[Section 1.7, Proposition 4]{Akhiezer}.

In this section, we only consider analytifications of almost homogeneous algebraic $G$-manifolds with respect to an algebraic action of a connected complex algebraic group $G$.

\begin{remark}
We list some properties of Albanese varieties of algebraic groups and almost homogeneous algebraic varieties (see \cite{BrionLoghomo, Brionnonaffine, Brion2, Brion3}). Let $G$ be a connected complex algebraic group, $G/H$ be an algebraic homogeneous $G$-manifold, and $X$ be an almost homogeneous $G$-manifold with open $G$-orbit $\Omega\cong G/H$.
\begin{enumerate}
\item There exists a minimal closed normal algebraic subgroup $G_{aff}\subset G$ such that the quotient $G/G_{aff}$ is an abelian variety. Note that $G_{aff}$ is an affine connected subgroup. 
\item The Albanese variety of $G$ is $ \operatorname{Alb}(G)=G/G_{aff}$, and the Albanese morphism is the quotient holomorphic map $G\to G/G_{aff}$.
\item The Albanese variety of $G/H$ is $ \operatorname{Alb}(G/H)=G/G_{aff}H$, and the Albanese morphism is the quotient holomorphic map $G/H\to G/G_{aff}H$.
\item The canonical map $ \operatorname{Alb}(\Omega)\to  \operatorname{Alb}(X)$ is an isomorphism, the Albanese variety $ \operatorname{Alb}(X)$ is a homogeneous algebraic $G$-manifold, and the Albanese morphism $X\to  \operatorname{Alb}(X)$ is a surjective $G$-equivariant holomorphic map. 
\item Let $I:=G_{aff}H$. The Albanese map $\alpha\colon X\to  \operatorname{Alb}(\Omega)=G/I$ is a holomorphic fiber bundle with fiber $Y:=\alpha^{-1}(eI)$ (which is a complex $I$-manifold). 
\item Let $G\times^{I}Y$ be the fiber bundle associated with the principal $I$-bundle $G\to G/I$ and complex $I$-manifold $Y$. We have that $X\cong G\times^{I}Y$ as fiber bundles. Moreover, $Y$ is an almost homogeneous $G_{aff}$-manifold with open $G_{aff}$-orbit $G_{aff}H/H\cong G_{aff}/G_{aff}\cap H$.
\end{enumerate}
\end{remark}

\begin{remark}\label{remar12}
If an almost homogeneous complex algebraic $G$-manifold $X$ with open $G$-orbit $\Omega$ is $(b,\sigma)$-compactifiable in $Al_{G, sm}^{a}$, then $X$ has $b$ topological ends, $\sigma = \dim_{\mathbb{C}}  \operatorname{Alb}(\Omega)$. In particular, if $G$ is a connected complex \textbf{linear} algebraic group, then $ \operatorname{Alb}(\Omega)$ is a point. It follows that $\sigma=0$ and every complex algebraic $G$-manifold $X$ has a $G$-equivariant compactification by the Sumihiro theorem (see Remark \ref{RemonSumi}). So, if $G$ is a connected complex linear algebraic group, then an almost homogeneous complex algebraic $G$-manifold $X$ is $(b,0)$-compactifiable in $Al_{G, sm}^{a}$ if and only if $X$ has $b$ topological ends. 
\end{remark}

\begin{remark}\label{remar13}
Let $X$ be an almost homogeneous complex algebraic $G$-manifold with open $G$-orbit $\Omega$, and let $Y$ be a fiber of the Albanese map $\alpha\colon X\to G/I$. Since $X=G\times^{I}Y$, it follows the following statement: $Y$ has a compactification in the category $\mathcal{A}l_{I,sm}^{a}$ if and only if $X$ has a compactification in the category $\mathcal{A}l_{G,sm}^{a}$.
\end{remark}

\section{Hartogs for $(1,\sigma)$-compactified pairs}\label{sec4}
Let $X'$ be a compact complex analytic variety, $Z\subset X'$ be a proper closed analytic set, $X:=X'\setminus Z$, $i\colon X \hookrightarrow X'$ be the open immersion, and $j\colon Z\hookrightarrow X'$ be the closed immersion. 

Consider the corresponding functors: 
\begin{gather*}
i_{!}\colon Sh_{\mathbb{C}}(X)\to Sh_{\mathbb{C}}(X')\\ 
i^{-1}\colon Sh_{\mathbb{C}}(X')\to Sh_{\mathbb{C}}(X)\\ 
j_{*}\colon Sh_{\mathbb{C}}(Z)\to Sh_{\mathbb{C}}(X')\\  
j^{-1}\colon Sh_{\mathbb{C}}(X')\to Sh_{\mathbb{C}}(Z).
\end{gather*}

We have the following natural morphisms of functors: 
\begin{gather*}
i_{!}\circ i^{-1}\rightarrow 1_{Sh_{\mathbb{C}}(X')}\\  1_{Sh_{\mathbb{C}}(X')}\rightarrow j_{*}\circ j^{-1}.
\end{gather*}

Also we have the following global section functors 
\begin{gather*}
\Gamma(X',-)\colon Sh_{\mathbb{C}}(X')\to Mod(\mathbb{C})\\ \Gamma_{c}(X,-)\colon Sh_{\mathbb{C}}(X)\to Mod(\mathbb{C})\\\Gamma(Z,-)\colon Sh_{\mathbb{C}}(Z)\to Mod(\mathbb{C}).
\end{gather*}

Since $\Gamma(X',-)\circ i_{!}=\Gamma_{c}(X,-), \Gamma(X',-)\circ j_{*}=\Gamma(Z, -)$, it follows the following natural morphisms of functors: 
\begin{gather*}
\Gamma_{c}(X,-)\circ i^{-1}\rightarrow \Gamma(X',-) \\
\Gamma(X',-)\rightarrow \Gamma(Z,-)\circ j^{-1}
\end{gather*}

Note that, for any injective object $I\in Sh_{\mathbb{C}}(X')$ we have the following exact sequence 
\begin{gather*}
0\to \Gamma_{c}(X,i^{-1}I)\to \Gamma(X',I)\to \Gamma(Z, j^{-1}I)\to 0.
\end{gather*}

This implies the following distinguished triangle for any sheaf $\mathcal{F}'\in Sh_{\mathbb{C}}(X')$: 

\begin{equation}
\mathbf{R}\Gamma_{c}(X,i^{-1}\mathcal{F}')\to \mathbf{R}\Gamma(X',\mathcal{F}')\to \mathbf{R}\Gamma(Z,j^{-1}\mathcal{F}')\to_{+1}
\end{equation}

We have the following commutative diagrams of morphisms of functors on $Sh(X')$:

\[
\begin{diagram}
\node{\Gamma_{c}(X,-)\circ i^{-1}} \arrow{e,t}{} \arrow{se,r}{} \node{\Gamma(X',-)} \arrow{s,r}{}
\\
\node[2]{\Gamma(X,-)\circ i^{-1}}\\
\end{diagram}\quad 
\begin{diagram}
\node{\Gamma(X',-)}\arrow{s,r}{} \arrow{e,t}{} \node{\Gamma(Z,-)\circ j^{-1}} \arrow{s,r}{}
\\
\node{\Gamma(X,-)\circ i^{-1}} \arrow{e,t}{} \node{\Gamma(\partial X,-)\circ i^{-1}} \\
\end{diagram}
\]
This implies the following commutative diagram of distinguished triangles for any sheaf $\mathcal{F}'\in Sh_{\mathbb{C}}(X')$: 
\[
\begin{diagram} 
\node{\mathbf{R}\Gamma_{c}(X,i^{-1}\mathcal{F}')} \arrow{e,t}{} \arrow{s,r}{=}
\node{\mathbf{R}\Gamma(X',\mathcal{F}')} \arrow{e,t}{} \arrow{s,r}{}
\node{\mathbf{R}\Gamma(Z,j^{-1}\mathcal{F}')\to_{+1}} \arrow{s,r}{}
\\
\node{\mathbf{R}\Gamma_{c}(X, i^{-1}\mathcal{F}')} \arrow{e,t}{}
\node{\mathbf{R}\Gamma(X,i^{-1}\mathcal{F}')} \arrow{e,t}{}
\node{\mathbf{R}\Gamma(\partial X, i^{-1}\mathcal{F}')\to_{+1}}
\end{diagram}
\]

Further, to the end of Section \ref{sec4}, we consider only the following categories $B_T$ as in Definition \ref{remoncategory}: $B$ is one of the following $An_{norm}$, $An_{sm}$, $Al_{norm}$, $Al_{sm}$, $An_{G,norm}^{a}$, $An_{G,sm}^{a}$, $Al_{G,norm}^{a}$, $Al_{G,sm}^{a}$; $T$ is one of the following $Id$ or $Sh_{\mathbb{C}}$.  In this case, $X$ has exactly $b$ topological ends. Suppose $(X,\mathcal{F})$ is $(b,\sigma)$-compactified by the morphism  $(X,\mathcal{F})\to(X',\mathcal{F}')$ in $B_T$.

It follows the following commutative diagram: 

\[
\begin{diagram} 
\node{\mathbf{R}\Gamma_{c}(X,\mathcal{F})} \arrow{e,t}{} \arrow{s,r}{\cong}
\node{\mathbf{R}\Gamma(X',\mathcal{F}')} \arrow{e,t}{} \arrow{s,r}{}
\node{\mathbf{R}\Gamma(Z,j^{-1}\mathcal{F}')\to_{+1}} \arrow{s,r}{}
\\
\node{\mathbf{R}\Gamma_{c}(X, \mathcal{F})} \arrow{e,t}{}
\node{\mathbf{R}\Gamma(X,\mathcal{F})} \arrow{e,t}{}
\node{\mathbf{R}\Gamma(\partial X, \mathcal{F})\to_{+1}}
\end{diagram}
\]
and for cohomologies we obtain the following commutative diagram
\begin{equation}\label{diag5}
\begin{diagram} 
\node{\Gamma(X',\mathcal{F}')} \arrow{e,t}{} \arrow{s,r}{}
\node{\Gamma(Z,j^{-1}\mathcal{F}')} \arrow{e,t}{} \arrow{s,r}{}
\node{H^{1}_{c}(X,\mathcal{F})} \arrow{s,r}{\cong} \arrow{e,t}{} \node{H^{1}(X',\mathcal{F}')} \arrow{s,r}{}
\\
\node{\Gamma(X,\mathcal{F})} \arrow{e,t}{} 
\node{\Gamma(\partial X,\mathcal{F})} \arrow{e,t}{}
\node{H^{1}_{c}(X,\mathcal{F})} \arrow{e,t}{} \node{H^{1}(X,\mathcal{F})}
\end{diagram}
\end{equation}

\subsection{Case of $(1,\sigma)$-compactifiability of $(X,\mathcal{F})$} 

Let $X\in B$, and let $\mathcal{F}$ be a locally free $\mathcal{O}_{X}$-module of finite rank. Let us assume that $(X,\mathcal{F})$ is $(1,\sigma)$-compactified by the morphism $(X,\mathcal{F})\to (X',\mathcal{F}')$ in $B_{Sh_{\mathbb{C}}}.$ Consider the following assertions: 

\begin{enumerate}
\item $\mathcal{F}$ is Hartogs;
\item The canonical homomorphism $\Gamma(X',\mathcal{F}')\to \Gamma(Z, j^{-1}\mathcal{F}')$ is an epimorphism.;
\item $\dim_{\mathbb{C}} H^{1}_{c}(X,\mathcal{F})\leq \sigma$;
\item $\dim_{\mathbb{C}} H^{1}_{c}(X,\mathcal{F})<\infty$.
\end{enumerate}

\begin{theorem}\label{thm30} 
With the above data, we obtain the following assertions.
\begin{enumerate}
\item We have the following implications: $1 \Rightarrow 2\Rightarrow 3 \Rightarrow 4$;
\item If $\sigma=0$, then $1\Leftrightarrow 2\Leftrightarrow 3\Rightarrow 4$;
\item If $\mathcal{F}=\mathcal{O}_{X}$, then $1\Leftrightarrow 2\Leftrightarrow 3\Leftrightarrow 4$;
\item If $X$ is nonsingular, each compact irreducible divisor of $X$ is $\mathcal{F}$-removable, $\Gamma(\partial X, \mathcal{O}_{X})\neq \mathbb{C}$ and $\Gamma(\partial X, \mathcal{F})\neq 0$, then $1 \Leftrightarrow 2\Leftrightarrow 3\Leftrightarrow 4$. 
\end{enumerate}
\end{theorem}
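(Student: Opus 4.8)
The plan is to translate all four conditions into injectivity statements about the two connecting maps out of $H^{1}_{c}(X,\mathcal{F})$, using the long exact sequences of the two distinguished triangles already set up (the one from $i_{!}i^{-1}\to\mathrm{id}\to j_{*}j^{-1}$ on $X'$, and the one $\mathbf{R}\Gamma_{c}\to\mathbf{R}\Gamma\to\mathbf{R}\Gamma(\partial X,-)$ on $X$), together with the comparison square in (\ref{diag5}); the four parts then follow by combining this dictionary with Lemma \ref{mainlemma1}, Proposition \ref{lemmonholom}, and Theorem \ref{mainthsheaf}.

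First I would record the exact sequences. Since $X$ is connected and noncompact and $\mathcal{F}$ is locally free, the identity theorem (Remark \ref{identlemma}) gives $\Gamma_{c}(X,\mathcal{F})=0$, so the two triangles yield
\[
0\to\Gamma(X',\mathcal{F}')\xrightarrow{\ \alpha\ }\Gamma(Z,j^{-1}\mathcal{F}')\xrightarrow{\ \partial\ }H^{1}_{c}(X,\mathcal{F})\xrightarrow{\ \beta\ }H^{1}(X',\mathcal{F}')
\]
and
\[
0\to\Gamma(X,\mathcal{F})\to\Gamma(\partial X,\mathcal{F})\xrightarrow{\ \partial'\ }H^{1}_{c}(X,\mathcal{F})\xrightarrow{\ \gamma\ }H^{1}(X,\mathcal{F}).
\]
By (\ref{diag5}) the left column is an isomorphism, so $\gamma$ equals $\delta\circ\beta$ up to that isomorphism, where $\delta\colon H^{1}(X',\mathcal{F}')\to H^{1}(X,\mathcal{F})$ is the natural map; in particular $\gamma$ injective forces $\beta$ injective. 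Next I would reformulate conditions 1 and 2. Since $b=1$, the space $X$ has exactly one topological end, and since $\mathcal{F}$ is locally free the identity theorem shows $\Gamma(X\setminus K,\mathcal{F})\to\Gamma(\partial X,\mathcal{F})$ is injective for every Hartogs pair $(K,X)$; hence by Lemma \ref{mainlemma-1}(3), $\mathcal{F}$ is Hartogs iff $\Gamma(X,\mathcal{F})\to\Gamma(\partial X,\mathcal{F})$ is surjective (the ``only if'' comes from applying the Hartogs property to the pairs $(\mu(K),X)$ and passing to the colimit $\Gamma(\partial X,\mathcal{F})=\varinjlim_{K}\Gamma(X\setminus K,\mathcal{F})$). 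By exactness of the second sequence this is equivalent to $\partial'=0$, i.e.\ to $\gamma$ being injective. Likewise, exactness of the first sequence shows that condition 2 ($\alpha$ surjective) is equivalent to $\partial=0$, i.e.\ to $\beta$ being injective.

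Part 1 is now immediate: $1\Leftrightarrow(\gamma\text{ injective})\Rightarrow(\beta\text{ injective})\Leftrightarrow 2$; if $\beta$ is injective then $\dim_{\mathbb{C}}H^{1}_{c}(X,\mathcal{F})\le\dim_{\mathbb{C}}H^{1}(X',\mathcal{F}')=\sigma$, giving $2\Rightarrow3$; and $3\Rightarrow4$ is clear (vacuous unless $\sigma<\infty$). For part 2, when $\sigma=0$ we have $H^{1}(X',\mathcal{F}')=0$, so ``$\beta$ injective'' means $H^{1}_{c}(X,\mathcal{F})=0$, which is also exactly condition 3; thus $2\Leftrightarrow3$, and $H^{1}_{c}(X,\mathcal{F})=0$ implies $\mathcal{F}$ is Hartogs by Lemma \ref{mainlemma1}, so $3\Rightarrow1$ and the cycle $1\Leftrightarrow2\Leftrightarrow3$ closes. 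For part 3, $\mathcal{F}=\mathcal{O}_{X}$ and $X$ is normal (the ambient categories here are $An_{norm},An_{sm}$ and their algebraic and equivariant analogues), so it remains to prove $4\Rightarrow1$, which is precisely Proposition \ref{lemmonholom}. For part 4, the additional hypotheses ($X$ nonsingular, every compact irreducible divisor $\mathcal{F}$-removable, $\Gamma(\partial X,\mathcal{O}_{X})\ne\mathbb{C}$, $\Gamma(\partial X,\mathcal{F})\ne 0$) together with $b=1$ and condition 4 are exactly the hypotheses of Theorem \ref{mainthsheaf}, which gives $4\Rightarrow1$; combined with part 1 this yields $1\Leftrightarrow2\Leftrightarrow3\Leftrightarrow4$.

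The main obstacle is the careful bookkeeping in the second paragraph: pinning down the equivalence ``$\mathcal{F}$ Hartogs $\iff$ $\Gamma(X,\mathcal{F})\to\Gamma(\partial X,\mathcal{F})$ surjective'' (which genuinely uses $b=1$, so that the pairs $(\mu(K),X)$ are Hartogs pairs, plus the identity theorem and a colimit argument), and verifying that the comparison square of (\ref{diag5}) really forces $\gamma$ to factor through $\beta$, so that the Hartogs property is controlled by the possibly finite-dimensional space $H^{1}(X',\mathcal{F}')$ rather than by the typically infinite-dimensional $\Gamma(Z,j^{-1}\mathcal{F}')$. Once this dictionary is in place, each remaining implication is a one-line diagram chase or a direct citation of Lemma \ref{mainlemma1}, Proposition \ref{lemmonholom}, or Theorem \ref{mainthsheaf}.
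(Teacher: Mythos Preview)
Your proof is correct and follows essentially the same route as the paper. The only presentational difference is in the implication $1\Rightarrow 2$: the paper observes that the first square of diagram (\ref{diag5}) is Cartesian, while you translate conditions 1 and 2 into injectivity of the connecting maps $\gamma$ and $\beta$ and use the factorisation $\gamma=\delta\circ\beta$; these are the same diagram chase read two ways. For part 3 you invoke Proposition \ref{lemmonholom} directly for $4\Rightarrow 1$, whereas the paper cites the packaged Theorem \ref{th1'}, which amounts to the same thing since you already have $1\Rightarrow 4$ from part 1.
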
 

\begin{proof}
\begin{enumerate}
\item The implication $2 \Rightarrow 3$ follows from the first line of the diagram \ref{diag5}. Further, since the first square of the diagram $\ref{diag5}$ is Cartesian, it follows that the implication $1 \Rightarrow 2$ is also true. The implication $3\Rightarrow 4$ is clear. 
\item The implication $3 \Rightarrow 1,2$ is clear (see diagram \ref{diag5}).
\item The equivalence $1 \Leftrightarrow 4$ follows from Theorem \ref{th1'}.
\item The implication $4\Rightarrow 1$ follows from Theorem \ref{mainthsheaf}.
\end{enumerate}
\end{proof}

In particular, we obtain the following Lefschetz type property.

\begin{corollary}\label{lef}
With the above data, if $\mathcal{F}$ is Hartogs, then for any sheaf of $\mathbb{C}$-vector spaces $\mathcal{F}'$ on $X'$ such that $i^{-1}\mathcal{F}'\cong \mathcal{F}$ and $\dim H^{1}(X',\mathcal{F}') <\infty$, the canonical homomorphism $\Gamma(X',\mathcal{F}')\to \Gamma(Z,j^{-1}\mathcal{F}')$ is surjective.\end{corollary}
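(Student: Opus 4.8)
The plan is to deduce the statement from a single fact — that under the hypotheses the restriction homomorphism $\Gamma(X,\mathcal{F})\to\Gamma(\partial X,\mathcal{F})$ is surjective — combined with the Cartesian (fibre-product) description of $\Gamma(X',\mathcal{F}')$ that already underlies the diagram (\ref{diag5}). This is precisely the mechanism behind the implication $1\Rightarrow 2$ in Theorem \ref{thm30}, now run for an arbitrary extension $\mathcal{F}'$ of $\mathcal{F}$ rather than for the sheaf fixed by the given compactification.

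First I would record that $X$ has exactly one topological end: a $(1,\sigma)$-compactification has $b=1$, and for the categories considered $b=e(X)$ (see Remark \ref{topologicalend} and the discussion of \S\ref{sectoncompact}). Hence for every compact $K\subset X$ the pair $(\mu(K),X)$ is a Hartogs pair, so the compacts $K$ with $(K,X)$ a Hartogs pair are cofinal among all compacts of $X$. Since $\mathcal{F}$ is Hartogs, each $\Gamma(X,\mathcal{F})\to\Gamma(X\setminus K,\mathcal{F})$ with $(K,X)$ a Hartogs pair is an epimorphism; passing to the colimit over this cofinal family shows that $\Gamma(X,\mathcal{F})\to\Gamma(\partial X,\mathcal{F})=\varinjlim_{K}\Gamma(X\setminus K,\mathcal{F})$ is an epimorphism. (By the identity theorem for locally free $\mathcal{O}_{X}$-modules of finite rank, Remark \ref{identlemma}, it is in fact an isomorphism, but only surjectivity is needed below.)

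Next, for the given $\mathcal{F}'\in Sh_{\mathbb{C}}(X')$ with $i^{-1}\mathcal{F}'\cong\mathcal{F}$, I would invoke that the first (left-hand) square of the diagram (\ref{diag5}) — assembled purely from the natural transformations $\Gamma(X',-)\to\Gamma(Z,-)\circ j^{-1}$ and $\Gamma(X',-)\to\Gamma(X,-)\circ i^{-1}$, hence valid for every sheaf on $X'$ — is Cartesian, i.e. $\Gamma(X',\mathcal{F}')=\Gamma(X,\mathcal{F})\times_{\Gamma(\partial X,\mathcal{F})}\Gamma(Z,j^{-1}\mathcal{F}')$. Given $s\in\Gamma(Z,j^{-1}\mathcal{F}')$, its image in $\Gamma(\partial X,\mathcal{F})$ lifts, by the previous paragraph, to some $t\in\Gamma(X,\mathcal{F})$; then $(t,s)$ lies in the fibre product and therefore defines a section of $\mathcal{F}'$ over $X'$ restricting to $s$ on $Z$. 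This yields surjectivity of $\Gamma(X',\mathcal{F}')\to\Gamma(Z,j^{-1}\mathcal{F}')$.

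There is essentially no obstacle beyond bookkeeping: the only points that need care are the cofinality of Hartogs compacts (immediate from one-endedness) and the fact that the fibre-product identity holds for a general, possibly non-coherent, sheaf $\mathcal{F}'$ on $X'$, which is already part of the formalism set up at the beginning of Section \ref{sec4}. Note that the finiteness assumption $\dim_{\mathbb{C}}H^{1}(X',\mathcal{F}')<\infty$ is not used in this surjectivity argument; it is retained so that the conclusion genuinely compares finite-dimensional spaces, in the spirit of a Lefschetz restriction theorem, and so that the statement reads as the implication $1\Rightarrow 2$ of Theorem \ref{thm30} applied to the extension $\mathcal{F}'$.
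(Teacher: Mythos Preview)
Your proposal is correct and follows exactly the mechanism the paper uses: Corollary \ref{lef} is an immediate consequence of the implication $1\Rightarrow 2$ in Theorem \ref{thm30}, whose proof rests precisely on the Cartesian property of the first square in diagram (\ref{diag5}) together with the surjectivity of $\Gamma(X,\mathcal{F})\to\Gamma(\partial X,\mathcal{F})$ coming from the Hartogs hypothesis and one-endedness. Your observation that the finiteness of $H^{1}(X',\mathcal{F}')$ plays no role in the surjectivity argument is also accurate.
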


\begin{remark}\label{lefrem}
In particular, for any locally free $\mathcal{O}_{X'}$-module of finite rank, we obtain that if $i^{-1}\mathcal{F}'$ is Hartogs, then the canonical homomorphism $\Gamma(X',\mathcal{F}')\to \Gamma(Z,j^{-1}\mathcal{F}')$ is surjective. For the case where $X'$ is a projective complex manifold, $Z$ is a hyperplane, and $\mathcal{F}'$ is an algebraic vector bundle, we obtain that $i^{-1}\mathcal{F}'$ is Hartogs because $X$ is Stein; so, we obtain the Grauert-Grothendieck theorem on the Lefschetz property for algebraic vector bundles over projective manifolds (see \cite[Theorem 3.1]{Bost} or \cite[pg. 83]{Gro}). 
\end{remark}

For the case $\mathcal{F}=\mathcal{O}_{X}$ and $Z$ is an analytic set of codimension one of a projective complex manifold $X'$, we can consider the sheaves $\mathcal{F'}$ as the line bundles $\mathcal{O}_{X'}(-mD)$ where $D$ is an effective divisor supported on $Z$, $m\in\mathbb{Z}_{>0}$. For the nef divisors $D$, we obtain the following characterization of the Hartogs property for $X$ (see \cite{Fek3} for more details). 

\begin{corollary}\label{nefcase}
Let $X'$ be a complex projective manifold, $\dim X'>1$, $Z$ be a connected analytic subset of codimension one which is the support of a nef effective Cartier divisor $D$ on $X'$, and $X:=X'\setminus Z$. The following assertions are equivalent: 

\begin{itemize}
\item $X$ is not Hartogs;

\item $H^{1}_{c}(X,\mathcal{O}_{X})$ is $\infty$-dimensional;

\item $D$ is abundant of Iitaka dimension one;

\item $D$ is semiample of Iitaka dimension one;

\item $X$ is a proper fibration over an affine curve;
\end{itemize}
\end{corollary}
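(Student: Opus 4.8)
The plan is to establish the chain of equivalences by combining the general $(1,\sigma)$-machinery of Theorem~\ref{thm30} with classical results on nef divisors on projective manifolds. Since $X'$ is projective, $X$ is quasi-projective, and because $Z = \operatorname{Supp}(D)$ is connected, $X$ has exactly one topological end; moreover the complement $X' \setminus X = Z$ is the support of the divisor $D$, so $(X,\mathcal{O}_X)$ is $(1,\sigma)$-compactified by $(X,\mathcal{O}_X) \to (X',\mathcal{O}_{X'})$ in $(\mathcal{A}l^a_{sm})_{Id}$ with $\sigma = \sigma_1(X',\mathcal{O}_{X'}) = \dim_{\mathbb{C}} H^1(X',\mathcal{O}_{X'}) < \infty$. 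Hence by part~(3) of Theorem~\ref{thm30} we already have the equivalence of the first two bullet points: $X$ is not Hartogs $\iff \dim_{\mathbb{C}} H^1_c(X,\mathcal{O}_X) = \infty$ (the negation of condition~4 there). So the whole content reduces to showing that this infinite-dimensionality is equivalent to the three divisor-theoretic statements.

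The next step is to pass from $H^1_c(X,\mathcal{O}_X)$ to the cohomology of the line bundles $\mathcal{O}_{X'}(-mD)$. Using the exact sequences $0 \to \mathcal{O}_{X'}(-mD) \to \mathcal{O}_{X'} \to \mathcal{O}_{mD} \to 0$ one computes, as in the remark following Corollary~\ref{nefcase}, that $H^1_c(X,\mathcal{O}_X) \cong \varinjlim_m H^1(X', \mathcal{O}_{X'}(-mD))$ up to the finite-dimensional contribution of $H^0, H^1$ of $\mathcal{O}_{X'}$; concretely, $\dim_{\mathbb{C}} H^1_c(X,\mathcal{O}_X) = \infty$ iff the spaces $H^0(X', \mathcal{O}_{X'}(mD))$, equivalently the section ring $R(X',D) = \bigoplus_m H^0(mD)$, grow — i.e. $D$ is effective with $\kappa(X',D) \geq 1$. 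The delicate point is to pin down that the relevant Iitaka dimension is exactly one, not larger: here one uses that $Z$ has codimension one and is connected, so the associated Iitaka fibration $X' \dashrightarrow Y$ (for $D$ with $\kappa(D) \geq 1$) has the property that $X = X' \setminus Z$ maps to an affine variety, and if $\kappa(D) \geq 2$ then $X$ would be Stein of dimension $\geq 2$ with $H^1_c$ vanishing in degree~$1$ — contradicting non-Hartogs. So non-Hartogs forces $\kappa(D) = 1$.

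For the equivalences among the last four bullets, the key input is that $D$ is \emph{nef}. By the base-point-free-type results for nef divisors on projective manifolds with $\kappa(D) = \nu(D)$ (abundance in the relevant special case), a nef divisor of Iitaka dimension one is abundant iff it is semiample of numerical dimension one; and a semiample divisor of Iitaka dimension one defines a morphism $\phi\colon X' \to B$ to a curve with $D = \phi^*(\text{ample})$, whence $X = X' \setminus \phi^{-1}(\text{point})$ is a proper fibration over the affine curve $B \setminus \{pt\}$. Conversely a proper fibration $X \to$ affine curve extends to $X' \to B$ with $Z$ a fiber, making $D$ semiample of Iitaka dimension one. Thus I would organize the proof as: (i) Theorem~\ref{thm30}(3) gives bullet~1 $\iff$ bullet~2; (ii) the $\varinjlim$ computation plus the codimension-one, connectedness, and Steinness arguments give bullet~2 $\iff$ bullet~3 ($D$ abundant, $\kappa = 1$); (iii) abundance theory for nef divisors of numerical dimension one gives bullet~3 $\iff$ bullet~4 (semiample, $\kappa = 1$); (iv) the Iitaka/Stein factorization gives bullet~4 $\iff$ bullet~5.

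The main obstacle is step~(iii), controlling the transition from "abundant" to "semiample" for a nef divisor: in general abundance ($\kappa = \nu$) does not force semiampleness, but in numerical/Iitaka dimension one it does, and this needs either an explicit argument via the Iitaka fibration and the numerical triviality of $D$ on fibers, or a citation to the structure theory of nef divisors of numerical dimension one (this is precisely where \cite{Fek3} is invoked). A secondary subtlety is making the $\varinjlim$ identification of $H^1_c(X,\mathcal{O}_X)$ rigorous when $H^1(X',\mathcal{O}_{X'}) \neq 0$, i.e. tracking that the finite-dimensional pieces do not interfere with the dichotomy finite/infinite — but this is routine diagram chasing on the long exact sequences, exactly in the spirit of diagram~\eqref{diag5}.
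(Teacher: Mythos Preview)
The paper does not actually prove this corollary: after Remark~\ref{lefrem} it simply states the result ``(see \cite{Fek3} for more details)'' and moves on. The only part that is established within the present paper is the equivalence of the first two bullets, which is exactly Theorem~\ref{thm30}(3) (equivalently Corollary~\ref{thm31}). Your step~(i) recovers this correctly.

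For the remaining equivalences your outline has the right architecture, but two of the steps contain genuine gaps. First, the chain ``$H^{1}_{c}(X,\mathcal{O}_X)\cong\varinjlim_m H^{1}(X',\mathcal{O}_{X'}(-mD))$ $\Longleftrightarrow$ the $H^{0}(X',\mathcal{O}_{X'}(mD))$ grow $\Longleftrightarrow$ $\kappa(D)\ge 1$'' is not justified: Serre duality relates $H^{1}(-mD)$ to $H^{n-1}(K_{X'}+mD)$, not to $H^{0}(mD)$, and in the analytic category the colimit of the $H^{0}(\mathcal{O}_{mD})$ is the \emph{formal} completion along $Z$, not $\Gamma(Z,j^{-1}\mathcal{O}_{X'})$; the passage between the two (and hence between $H^{1}_{c}$ and the algebraic invariants of $D$) is one of the nontrivial inputs from \cite{Fek3}. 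Second, the assertion ``$\kappa(D)\ge 2\Rightarrow X$ Stein'' is false: for semiample $D$ the complement $X$ is only holomorphically convex, with Remmert reduction of dimension $\kappa(D)$ (cf.\ Example~\ref{exlinearsys} and Corollary~\ref{holoconvex}), and for nef but non-semiample $D$ you have no map at all to work with.

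The real difficulty, which your sketch does not touch, is the case of a nef $D$ that is \emph{not} semiample: one must show that if $\nu(D)\ge 2$ (regardless of $\kappa$) then $X$ is Hartogs, and that if $\nu(D)=1$ but $\kappa(D)=0$ then $X$ is again Hartogs. Neither follows from the Iitaka fibration (there is none in the second case), and this is precisely the content outsourced to \cite{Fek3}. Your step~(iii) acknowledges that ``abundant $\Rightarrow$ semiample'' in numerical dimension one is delicate, but the harder direction --- ruling out non-Hartogs behaviour when $D$ is nef and \emph{not} abundant --- is missing entirely.
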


Consider $X\in B$ and let us assume that $X$ is $(1,\sigma)$-compactified by $X\to X'$ in $B$. Taking $\mathcal{F}'=\mathcal{O}_{X'}$, we obtain the following corollary. 

\begin{corollary}\label{thm31}
The following assertions are equivalent:
\begin{enumerate}
\item $\mathcal{O}_{X}$ is Hartogs;
\item $\Gamma(Z, j^{-1}\mathcal{O}_{X'})\cong \mathbb{C}$;
\item $\dim_{\mathbb{C}} H^{1}_{c}(X,\mathcal{O}_{X})\leq \sigma$;
\item $\dim_{\mathbb{C}} H^{1}_{c}(X,\mathcal{O}_{X})<\infty$.
\end{enumerate}
\end{corollary}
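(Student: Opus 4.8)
\emph{Proof proposal.}
The plan is to read this off Theorem~\ref{thm30} with $\mathcal{F}=\mathcal{O}_X$ and $\mathcal{F}'=\mathcal{O}_{X'}$; the only substantive point is to rewrite condition~(2) of that theorem in the form stated here. First I would record the relevant features of the compactification $X\hookrightarrow X'$. Since $X$ is $(1,\sigma)$-compactified by $X\to X'$ in $B$, Definition~\ref{Defibsigma} says that $Z:=X'\setminus X$ is a proper analytic subset with exactly one connected component, $X'$ is compact, $X$ is noncompact with exactly one topological end, and $\dim_{\mathbb{C}}H^{1}(X',\mathcal{O}_{X'})=\sigma$. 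Moreover $X'$ is a compact (reduced, irreducible, hence connected) complex analytic variety, so the maximum principle gives $\Gamma(X',\mathcal{O}_{X'})=\mathbb{C}$. Thus the hypotheses of Theorem~\ref{thm30} are met for the morphism $(X,\mathcal{O}_X)\to(X',\mathcal{O}_{X'})$, and in particular part~(3) of that theorem applies.

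Next I would match condition~(2) of Theorem~\ref{thm30} with condition~(2) above. The map in question is $\Gamma(X',\mathcal{O}_{X'})\to\Gamma(Z,j^{-1}\mathcal{O}_{X'})$; its domain is $\mathbb{C}$ (the constants), and since $Z\neq\emptyset$ this map is injective, so its image is a copy of $\mathbb{C}$ sitting inside $\Gamma(Z,j^{-1}\mathcal{O}_{X'})$ (the constant sections). Hence this homomorphism is an epimorphism if and only if $\Gamma(Z,j^{-1}\mathcal{O}_{X'})=\mathbb{C}$, which is precisely assertion~(2) of the corollary. Connectedness of $Z$, which is part of the $(1,\sigma)$ hypothesis, is what makes ``$\Gamma(Z,j^{-1}\mathcal{O}_{X'})\cong\mathbb{C}$'' the natural way to phrase surjectivity here.

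Finally I would invoke Theorem~\ref{thm30}(3): for $\mathcal{F}=\mathcal{O}_X$ one has $1\Leftrightarrow2\Leftrightarrow3\Leftrightarrow4$, where $3$ and $4$ are literally assertions (3) and (4) here and $1$ is ``$\mathcal{O}_X$ is Hartogs''. Combining with the reformulation of (2) from the previous paragraph yields the stated fourfold equivalence. I do not expect a genuine obstacle: all of the content is already packaged in Theorem~\ref{thm30} (whose nontrivial implication $4\Rightarrow1$ for $\mathcal{F}=\mathcal{O}_X$ goes through Theorem~\ref{th1'} and ultimately the Andreotti--Hill trick of Proposition~\ref{lemmonholom}), and the only place to be slightly careful is the identification $\Gamma(X',\mathcal{O}_{X'})=\mathbb{C}$, which turns condition~(2) of Theorem~\ref{thm30} into the clean condition $\Gamma(Z,j^{-1}\mathcal{O}_{X'})\cong\mathbb{C}$.
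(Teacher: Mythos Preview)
Your proposal is correct and follows exactly the approach the paper intends: the corollary is stated immediately after Theorem~\ref{thm30} with only the sentence ``Taking $\mathcal{F}'=\mathcal{O}_{X'}$, we obtain the following corollary,'' so the content is simply Theorem~\ref{thm30}(3) specialized to the structure sheaf. Your explicit identification $\Gamma(X',\mathcal{O}_{X'})=\mathbb{C}$ (using that $X'$ is compact and lies in a category of normal or smooth varieties) and the resulting reformulation of surjectivity as $\Gamma(Z,j^{-1}\mathcal{O}_{X'})\cong\mathbb{C}$ is precisely the small check the paper leaves to the reader.
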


\begin{remark}
In the case of $B=Al_{sm}^{a}$, we have $\sigma=\dim_{\mathbb{C}} \operatorname{Alb}(X)$ (see Propositions \ref{prop27}). Moreover, if $X$ is an almost homogeneous algebraic $G$-manifold with an open $G$-orbit $\Omega$ (where $G$ is a connected complex algebraic group), then $\sigma=\dim_{\mathbb{C}} \operatorname{Alb}(\Omega)$ (see Section \ref{sectionalmost}).
\end{remark}

\subsection{Case of $(b,\sigma)$-compactifiability of $(X,\mathcal{O}_{X}), b>1$}

\begin{proposition}\label{prop33}
Suppose $X\in B$ is $(b,\sigma)$-compactified by $X\to X'$ in $B$, $b>1$. Let $\{E_{i}\}$ be the set of connected components of $Z=X'\setminus X$. If there exists $i$ such that $\Gamma(E_i, \mathcal{O}_{X'}|_{E_i})\cong\mathbb{C}$, then $\mathcal{O}_X$ is Hartogs. 
\end{proposition}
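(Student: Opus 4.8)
The plan is to reduce Proposition \ref{prop33} to the one‑end situation already settled in Corollary \ref{thm31}, by passing to the intermediate open subvariety of $X'$ obtained by deleting all components of $Z$ except the ``good'' one. Relabel so that $\Gamma(E_{1},\mathcal{O}_{X'}|_{E_{1}})\cong\mathbb{C}$. Since $Z=X'\setminus X$ is a proper analytic subset of the compact space $X'$, it has only finitely many connected components, each of which is a nowhere dense analytic set; hence $\bigsqcup_{k\geq 2}E_{k}$ is closed in $X'$, and I set $Y:=X'\setminus\bigsqcup_{k\geq 2}E_{k}=X\cup E_{1}$. This $Y$ is open in $X'$, connected (it contains the dense connected set $X$, and $E_{1}\subset\overline{X}$), noncompact (a proper open subset of the compact connected $X'$), and it belongs to the same category $B$ as $X$ — in the equivariant cases one first observes that a connected group $G$ fixes each component $E_{k}$ of the invariant set $Z$, so $Y$ is $G$‑invariant. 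The open immersion $Y\hookrightarrow X'$ then exhibits $(Y,\mathcal{O}_{Y})$ as a $(1,\sigma')$‑compactified pair in the sense of Definition \ref{Defibsigma}, with complement the connected proper analytic set $E_{1}$ and with $\sigma'=\dim_{\mathbb{C}}H^{1}(X',\mathcal{O}_{X'})<\infty$ by the Cartan--Serre finiteness theorem applied to the compact space $X'$.

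Now I would apply Corollary \ref{thm31} to the variety $Y$ (so that, in the notation there, the compactifying space is $X'$ and the ``boundary'' analytic set is $E_{1}$): its equivalence ``$\mathcal{O}_{Y}$ is Hartogs $\iff\Gamma(E_{1},\mathcal{O}_{X'}|_{E_{1}})\cong\mathbb{C}$'' combined with the hypothesis immediately yields that $\mathcal{O}_{Y}$ is Hartogs. It remains to descend from $Y$ down to $X$. Let $(K,X)$ be an arbitrary Hartogs pair. Since $X\subset Y$ is a domain, Remark \ref{mainlemma2} shows that $(K,Y)$ is again a Hartogs pair, so $\mathcal{O}_{Y}$ is Hartogs with respect to $(K,Y)$. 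Then Proposition \ref{prop}, applied with ambient normal variety $Y$, domain $X\subset Y$, compactifying CLCH‑space $X'$ and sheaf $\mathcal{F}'=\mathcal{O}_{X'}$ (admissible precisely because $\sigma_{1}(X',\mathcal{O}_{X'})<\infty$), gives that $\mathcal{O}_{Y}|_{X}=\mathcal{O}_{X}$ is Hartogs with respect to $(K,X)$. As $(K,X)$ was arbitrary, $\mathcal{O}_{X}$ is Hartogs.

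The substantive work is essentially all packaged in Corollary \ref{thm31} and Proposition \ref{prop}; the only point that genuinely requires care — the ``main obstacle'' — is the bookkeeping needed to see that $Y$ is an honest object of $B$ carrying an honest $(1,\sigma')$‑compactification: connectedness and normality of $Y$ (automatic from irreducibility/normality of $X'$ and density of $X$), $G$‑invariance of $Y$ in the equivariant categories (from connectedness of $G$), the fact that $E_{1}$ is a connected proper analytic subset of $X'$ and that removing the other components still leaves a proper open subset, and finite‑dimensionality of $H^{1}(X',\mathcal{O}_{X'})$ (compactness of $X'$). The hypothesis $b>1$ enters only to make $E_{1}\subsetneq Z$, so that $Y$ is strictly between $X$ and $X'$ and the reduction is non‑vacuous; for $b=1$ the statement is literally Corollary \ref{thm31}.
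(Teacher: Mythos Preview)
Your overall strategy --- pass to an intermediate open subvariety of $X'$ that is $(1,\sigma')$-compactified, apply Corollary~\ref{thm31}, then descend to $X$ via Proposition~\ref{prop} (equivalently, via the implication $1\Rightarrow 2$ of Theorem~\ref{th1'}) --- is exactly the paper's approach. However, you have the construction of the intermediate variety backwards, and this is a genuine gap, not just a typo.

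You set $Y:=X'\setminus\bigsqcup_{k\geq 2}E_{k}=X\cup E_{1}$ and then assert that ``the open immersion $Y\hookrightarrow X'$ \ldots\ [has] complement the connected proper analytic set $E_{1}$''. But $X'\setminus Y=\bigsqcup_{k\geq 2}E_{k}$, not $E_{1}$: you have \emph{kept} $E_{1}$ inside $Y$, so it cannot be the complement. Consequently $Y$ is $(b-1,\sigma')$-compactified, not $(1,\sigma')$-compactified, and Corollary~\ref{thm31} does not apply to it. (Concretely, $Y$ still has $b-1$ topological ends, one near each $E_{k}$ for $k\geq 2$.)

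The correct intermediate variety is the one the paper uses: $X'':=X'\setminus E_{1}$, i.e.\ remove \emph{only} the good component. Then $X'\setminus X''=E_{1}$ is connected, so $X''$ is genuinely $(1,\sigma')$-compactified in $B$, and Corollary~\ref{thm31} together with the hypothesis $\Gamma(E_{1},\mathcal{O}_{X'}|_{E_{1}})\cong\mathbb{C}$ gives that $\mathcal{O}_{X''}$ is Hartogs. Since $X=X''\setminus\bigsqcup_{k\geq 2}E_{k}$ is a domain in $X''$, your descent argument via Proposition~\ref{prop} (or the paper's citation of Theorem~\ref{th1'}) now goes through verbatim with $X''$ in place of your $Y$. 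All the ``bookkeeping'' checks you listed (normality, $G$-invariance, etc.) transfer unchanged to $X''$.
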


\begin{proof}
By Corollary \ref{thm31}, it follows that the structure sheaf $\mathcal{O}_{X''}$ of $X'':=X'\setminus E_{i}$ is Hartogs. By Theorem \ref{th1'}, it follows that $\mathcal{O}_X$ is Hartogs. 
\end{proof}

\textbf{Question}: Suppose $X\in B$ is $(b,\sigma)$-compactified by $X\to X'$ in $B$, $b>1$. Let $\{E_{i}\}$ be a set of connected components of $Z=X'\setminus X$. Is it true that $\mathcal{O}_X$ is Hartogs implies that there exists $i$ such that $\Gamma(E_i, \mathcal{O}_{X'}|_{E_i})\cong\mathbb{C}$?

\newpage
\section{Almost homogeneous algebraic $G$-varieties}\label{sec5}
\subsection{Calculations for $\Gamma(Z,\mathcal{O}_{X'}|_{Z})$}

Let $X\in Al_{G,sm}^{a}$ be an almost homogeneous algebraic $G$-manifold with open $G$-orbit $\Omega=G/H$ (here $G$ is a connected complex algebraic group). Suppose $X$ is $(1,\sigma)$-compactifiable in $Al_{G,sm}^{a}$. It follows that $X$ has only one topological end and $\dim_{\mathbb{C}}  \operatorname{Alb}(\Omega)=\sigma$ (see Remark \ref{remar12}). Let $X\hookrightarrow X'$ be any compactification in $Al_{G,sm}^{a}$. 

Note that $X=G\times^{I}Y$ and $X'=G\times^{I}Y'$, where $I=G_{aff}H$, $Y$ is a complex $I$-manifold (moreover, $Y$ is an almost homogeneous $G_{aff}$-manifold), and $Y'$ is an $I$-equivariant compactification of $Y$ (see Section \ref{sectionalmost}). 

Let $Z:=X'\setminus X, F:=Y'\setminus Y$, and $q\colon G\times Y'\to X'$ be the canonical quotient map. Since $\pi^{-1}(X)=G\times Y$, it follows that $\pi^{-1}(Z)=G\times F$. 

Now for any neighborhood $U$ of $Z$, we have $G\times F\subset q^{-1}(U).$ Moreover, $\bigcap\limits_{U\supset Z}q^{-1}(U)=G\times F.$ It follows that the family $\{q^{-1}(U)\}$ forms a base of $I$-invariant open neighborhoods of $G\times F$. On the other hand, the set $G\times F$ admits a base of all neighborhoods of the form $G\times V$, where $\{V\}$ is a base of all neighborhoods of $F$. 

Recall that for any open set $U\subset X'$ we have $\Gamma(U,\mathcal{O}_{X'})=(\Gamma(q^{-1}(U),\mathcal{O}_{G\times Y'}))^{I}.$ 

Now we obtain the following canonical injective homomorphisms, where $\widehat{\otimes}$ is the topological tensor product (see \cite{Dem, Schaefer}):
$$\Gamma(q^{-1}(U),\mathcal{O}_{G\times Y'})\hookrightarrow\varinjlim\limits_{V\supset F}\Gamma(G\times V,\mathcal{O}_{G\times Y'})\cong\varinjlim\limits_{V\supset F}\Gamma(G,\mathcal{O}_{G})\widehat{\otimes}\Gamma(V,\mathcal{O}_{Y'})\hookrightarrow \Gamma(G,\mathcal{O}_{G})\widehat{\otimes}\Gamma(F,\mathcal{O}_{Y'}|_{F}).$$

Taking $I$-invariants and the direct limit over neighborhoods $U$ of $Z$ for both sides, we obtain the following canonical injective homomorphism: 

$$\Gamma(Z,\mathcal{O}_{X'}|_{Z})=\varinjlim\limits_{U\supset Z}(\Gamma(q^{-1}(U),\mathcal{O}_{G\times Y'}))^{I}\hookrightarrow(\Gamma(G,\mathcal{O}_{G})\widehat{\otimes}\Gamma(F,\mathcal{O}_{Y'}|_{F}))^{I}.$$

Note that the algebra of regular functions $\mathbb{C}[G]$ is a dense subspace of the algebra of holomorphic functions $\Gamma(G,\mathcal{O}_{G})$ with respect to the canonical Fr\'echet topology. 

Now, assume that there exists an $I$-invariant open algebraic subvariety $S\subset Y'$ such that $S\supset F$ and $\mathbb{C}[S]$ is a dense subspace in $\Gamma(F,\mathcal{O}_{Y'}|_{F})$ with respect to the canonical inductive limit topology, then we have the following canonical topological isomorphisms: 
\begin{gather*}
\Gamma(G,\mathcal{O}_{G})\widehat{\otimes}\Gamma(F,\mathcal{O}_{Y'}|_{F})\cong\mathbb{C}[G]\widehat{\otimes}\mathbb{C}[S]\cong \Gamma(G\times S, \mathcal{O}_{G\times S}).
\end{gather*}

Since $G\times S=q^{-1}(q(G\times S))$, it follows the following canonical topological isomorphism: 
\begin{gather*}
\Gamma(Z,\mathcal{O}_{X'}|_{Z})\cong (\mathbb{C}[G]\widehat{\otimes}\mathbb{C}[S])^{I}.
\end{gather*} 

\subsection{Case of semiabelian variety $G$}

Let $G$ be a semiabelian complex algebraic variety. This means that $G$ is an extension of an abelian variety $A$ by an algebraic torus $T\cong(\mathbb{C}^{*})^{n}$. 

Let $X$ be an almost homogeneous algebraic $G$-manifold with open $G$-orbit $\Omega\cong G$. In this case, $X=G\times^{T}Y$, where $Y$ is a nonsingular toric variety (about toric varieties, see, for instance, \cite{Cox}). 

Each toric variety $Y$ admits a $T$-equivariant compactification $Y'$ and $\operatorname{Alb}(T)$ is a point. It follows that if $Y$ has only one topological end, then $Y$ is a $(1,0)$-compactifiable in $Al_{T,sm}^{a}$. 

The fibration lemma \cite[Section 3]{Gilliganends} implies that if $Y$ has only one topological end, then $X$ has only one topological end. 

Let $\mathfrak{X}(T)$ be a character lattice of the torus $T$ and $\mathfrak{X}(T)_{\mathbb{R}}:=\mathfrak{X}(T)\otimes_{\mathbb{Z}}\mathbb{R}$. Let $G_{ant}\subset G$ be the largest antiaffine subgroup (equivalently, the smallest normal subgroup of $G$ having an affine quotient, or kernel of affinization map $G\to\operatorname{Spec} (\mathbb{C}[G])$). 

The multiplication map $(T\times G_{ant})/(G_{ant})_{aff}\to G$ is an isogeny with a finite kernel $N=(T\cap G_{ant})/(G_{ant})_{aff}$ (see \cite[Remark 5.1.2.]{Brion3}). Here the groups $T\cap G_{ant}, (G_{ant})_{aff}$ are viewed as subgroups of $T\times G_{ant}$ via $x\to (x,x^{-1})$. It follows the following canonical isomorphism $\mathbb{C}[G]=(\mathbb{C}[T]^{(G_{ant})_{aff}})^{N}.$ 

Since the group $(G_{ant})_{aff}$ is a connected closed algebraic subgroup of the torus $T$, it follows that $(G_{ant})_{aff}$ is also a torus, which is the intersection of kernels of some characters of $T$ \cite[Section 16]{Humph}: 
\begin{gather*}
(G_{ant})_{aff}=\bigcap\limits_{i=1}^{k}\ker (l_{i}\colon T\to\mathbb{C}^{*}).
\end{gather*}

Let $L_{0}:=\mathbb{Z}\langle l_{i}\mid i=1,\cdots n\rangle $ be a sublattice of $\mathfrak{X}(T)$ spanned by $l_{i}$. Note that $\mathbb{C}[T]=\bigoplus\limits_{l\in \mathfrak{X}(T)}\mathbb{C} t^{-l},$ where $t^{-l}$ is a monomial of the weight $l\in \mathfrak{X}(T)$. It follows that $t^{-l}$ is $(G_{ant})_{aff}$-invariant if and only if $l|_{(G_{ant})_{aff}}\equiv 1$ if and only if $ l\in L_0$. This implies that 
\begin{gather*}
\mathbb{C}[G]=\bigg(\bigoplus\limits_{l\in L_0}\mathbb{C} t^{-l}\bigg)^{N}.
\end{gather*}

Note that the lattice $L_{0}$ is the character lattice of the torus $T/(G_{ant})_{aff}$ and $N$ is a finite subgroup of $T/(G_{ant})_{aff}$. Suppose the elements of $N$ are represented by $\xi_{i}\in T\cap G_{ant}, i=1,\cdots, m$. For any $l\in L_0$, the monomial $t^{-l}$ is $N$-invariant if and only if $l(\xi_{i})=1, \forall i=1,\cdots,m$. 

Consider the lattice $L:=\{l\in L_{0}\mid l(\xi_{i})=1, \forall i=1,\cdots,m\}$ (which is a finite index sublattice of $L$). It follows that 

\begin{gather*}
\mathbb{C}[G]=\bigoplus\limits_{l\in L}\mathbb{C}t^{-l}.
\end{gather*}

Assume that $Y$ has only one topological end. There exists a base $\{V\}$ of all neighborhoods of $F$ such that each $V$ is invariant with respect to the real compact form $K\cong (S^{1})^{n}$ of torus $T$ (see, for instance, \cite[Section 2.2, Lemma 2]{Akhiezer}). Moreover, there exists an open toric subvariety $S\subset Y'$ (here $Y'$ is a $T$-equivariant compactification of $Y$) such that $F\subset S$ and $\mathbb{C}[S]$ is a dense subspace of $\Gamma(V,\mathcal{O}_{Y'})$ for any $K$-invariant neighborhood $V$ (see, for instance, \cite[Section 4]{Fek1}). 

Let $\Sigma$ be the fan of the toric variety $Y$, and $|\Sigma|$ be the support of $\Sigma$. This implies that 
\begin{gather*}
\mathbb{C}[S]=\bigoplus\limits_{l\in \mathfrak{X}(T)\cap C}\mathbb{C}t^{-l},
\end{gather*} where $C=(\overline{\mathfrak{X}(T)_{\mathbb{R}}\setminus\mid\Sigma\mid})^{\vee}\subset \mathfrak{X}(T)_{\mathbb{R}}$ is a strictly convex rational cone which is the dual cone of the closed cone $\overline{\mathfrak{X}(T)_{\mathbb{R}}\setminus\mid\Sigma\mid}$ (see, for instance, \cite[Section 6]{Fek1}).

Note that $$\big(\mathbb{C}[G]\widehat{\otimes}\mathbb{C}[S]\big)^{T}=\bigg(\big(\bigoplus\limits_{l\in L}\mathbb{C}t^{-l}\big)\widehat{\otimes}\big(\bigoplus\limits_{l'\in \mathfrak{X}(T)\cap C}\mathbb{C}t'^{-l'}\big)\bigg)^{T}=\bigg(\widehat{\bigoplus\limits_{(l,l')\in L \times (\mathfrak{X}(T)\cap C)}}\mathbb{C}t^{-l}\otimes t'^{-l'}\bigg)^{T},$$ where $\widehat{\bigoplus}$ is the completed direct sum.

Algebraic torus $T$ acting in this space by the following formula: $\xi. (t^{-l}\otimes t'^{-l'})= (l+l')(\xi)(t^{-l}\otimes t'^{-l'}).$

The monomial $t^{-l}\otimes t^{-l'}$ is $T$-invariant if and only if $l+l'=0$. It follows that: 
\begin{gather*}
(\mathbb{C}[G]\widehat{\otimes}\mathbb{C}[S])^{T}=\widehat{\bigoplus\limits_{l\in L\cap (-C)}}\mathbb{C}t^{-l}\otimes t'^{l}.
\end{gather*}

We obtain the following convex-geometric criterion for the Hartogs phenomenon:

\begin{theorem}\label{semitoric}
Let $G$ be a semiabelian variety with the abelian variety $A$ and torus $T$, and let $L\subset\mathfrak{X}(T)$ be the sublattice of the character lattice of $T$ such that $\mathbb{C}[G]=\bigoplus\limits_{l\in L}\mathbb{C}t^{-l}$. Let $X$ be a noncompact almost homogeneous complex algebraic $G$-manifold with open orbit $G$. Assume that the fiber $Y$ of the Albanese map $\alpha\colon X\to A$ is a toric variety with a fan $\Sigma$ such that $\overline{\mathfrak{X}(T)_{\mathbb{R}}\setminus\mid\Sigma\mid}$ is a connected set. Then $\mathcal{O}_{X}$ is Hartogs if and only if $L\cap (\overline{\mathfrak{X}(T)_{\mathbb{R}}\setminus\mid\Sigma\mid})^{\vee}=0$.
\end{theorem}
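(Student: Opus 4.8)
The plan is to recognize the statement as the concrete incarnation of Corollary \ref{thm31}: once we know that $X$ is $(1,\sigma)$-compactifiable in $Al^{a}_{G,sm}$ for a finite $\sigma$, the Hartogs property of $\mathcal{O}_{X}$ becomes equivalent to $\Gamma(Z,\mathcal{O}_{X'}|_{Z})\cong\mathbb{C}$, and then we substitute the explicit monomial description of $\Gamma(Z,\mathcal{O}_{X'}|_{Z})$ assembled in the previous subsections.

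First I would verify the $(1,\sigma)$-compactifiability. The connectedness of $\overline{\mathfrak{X}(T)_{\mathbb{R}}\setminus|\Sigma|}$ forces the toric variety $Y$ to have exactly one topological end; since $Y$ admits a smooth $T$-equivariant compactification $Y'$ and $\operatorname{Alb}(T)$ is a point, $Y$ is $(1,0)$-compactifiable in $Al^{a}_{T,sm}$. By the fibration lemma, $X=G\times^{T}Y$ then has one topological end as well, and $X':=G\times^{T}Y'$ is a compact smooth $G$-equivariant compactification with $Z:=X'\setminus X$ connected and $\dim_{\mathbb{C}}H^{1}(X',\mathcal{O}_{X'})=\sigma=\dim_{\mathbb{C}}\operatorname{Alb}(\Omega)=\dim_{\mathbb{C}}A<\infty$ (Remark \ref{remar12}, using $G_{aff}=T$ so $\operatorname{Alb}(G)=G/T=A$). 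Hence Corollary \ref{thm31} applies and gives: $\mathcal{O}_{X}$ is Hartogs if and only if $\Gamma(Z,\mathcal{O}_{X'}|_{Z})\cong\mathbb{C}$.

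Next I would feed in the computation already carried out for the semiabelian case. With $I=G_{aff}=T$, $F:=Y'\setminus Y$, and the open toric subvariety $S\subset Y'$ containing $F$ whose coordinate ring $\mathbb{C}[S]$ is dense in the sections over $K$-invariant neighborhoods of $F$, one obtains the canonical topological isomorphisms
\[
\Gamma(Z,\mathcal{O}_{X'}|_{Z})\;\cong\;\big(\mathbb{C}[G]\,\widehat{\otimes}\,\mathbb{C}[S]\big)^{T}\;\cong\;\widehat{\bigoplus\limits_{l\in L\cap(-C)}}\,\mathbb{C}\,\big(t^{-l}\otimes (t')^{l}\big),\qquad C=(\overline{\mathfrak{X}(T)_{\mathbb{R}}\setminus|\Sigma|})^{\vee}.
\]
The genuine work behind this chain — the density of $\mathbb{C}[G]$ and of $\mathbb{C}[S]$, the manipulation of the completed tensor product $\widehat{\otimes}$, and the interchange of $T$-invariants with the direct limit over neighborhoods of $Z$ — is exactly what the preceding subsections establish, so at this point I would simply invoke it.

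Finally comes the combinatorial conclusion. The summand indexed by $l=0$ is always present (as $0\in L$ and $0\in C$) and spans the constants, so $\Gamma(Z,\mathcal{O}_{X'}|_{Z})\cong\mathbb{C}$ exactly when $L\cap(-C)=\{0\}$; since $L$ is a lattice we have $L\cap(-C)=-(L\cap C)$, so this is equivalent to $L\cap C=\{0\}$, i.e. $L\cap(\overline{\mathfrak{X}(T)_{\mathbb{R}}\setminus|\Sigma|})^{\vee}=0$. Combined with the equivalence coming from Corollary \ref{thm31}, this is the theorem. I expect the one point requiring care — everything else being bookkeeping over machinery already in place — to be the identification of ``the toric variety $Y$ has a single topological end'' with the connectedness of $\overline{\mathfrak{X}(T)_{\mathbb{R}}\setminus|\Sigma|}$; this should be either cited or checked directly from the affine-chart structure of $Y$.
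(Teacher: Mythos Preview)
Your proposal is correct and follows essentially the same route as the paper: the theorem is stated immediately after the computations of Sections~5.1--5.2, and its proof is precisely the combination you describe --- reduce via Corollary~\ref{thm31} to the condition $\Gamma(Z,\mathcal{O}_{X'}|_{Z})\cong\mathbb{C}$, insert the explicit formula $(\mathbb{C}[G]\widehat{\otimes}\mathbb{C}[S])^{T}\cong\widehat{\bigoplus}_{l\in L\cap(-C)}\mathbb{C}\,t^{-l}\otimes t'^{l}$, and read off the lattice condition. Your identification $I=G_{aff}H=T$ (since $H$ is trivial here) and the observation that $L\cap(-C)=\{0\}\Leftrightarrow L\cap C=\{0\}$ are exactly what is needed; the paper likewise leaves the equivalence between ``$Y$ has one topological end'' and the connectedness of $\overline{\mathfrak{X}(T)_{\mathbb{R}}\setminus|\Sigma|}$ to the cited references \cite{Fek1,Fek2}.
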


\begin{corollary}
Let $G, A, T, X, Y$ be as in Theorem \ref{semitoric}. If $\mathcal{O}_Y$ is Hartogs, then $\mathcal{O}_{X}$ is Hartogs. 
\end{corollary}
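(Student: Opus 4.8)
The plan is to observe that, through Theorem \ref{semitoric}, the hypothesis ``$\mathcal{O}_Y$ is Hartogs'' is the \emph{same} convex-geometric condition as ``$\mathcal{O}_X$ is Hartogs'', only phrased with a larger lattice, and then to read off the implication from the inclusion $L\subseteq\mathfrak{X}(T)$.

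First I would check that $Y$ itself satisfies the hypotheses of Theorem \ref{semitoric}, with $G$ replaced by the torus $T$. The nonsingular toric variety $Y$ is a noncompact almost homogeneous algebraic $T$-manifold with open orbit $T$: it is noncompact because $X=G\times^{T}Y$ fibres over the abelian variety $A=G/T$ with fibre $Y$, so a compact $Y$ would force $X$ compact, contrary to assumption; and $T$ is a semiabelian variety with trivial abelian part. The Albanese morphism of the toric manifold $Y$ is constant, so $Y$ is its own Albanese fibre, with the same fan $\Sigma$, and the connectedness of $\overline{\mathfrak{X}(T)_{\mathbb{R}}\setminus|\Sigma|}$ is precisely the standing hypothesis of the corollary (equivalently, $Y$ has a single topological end, cf.\ \cite{Fek1}). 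Finally, since $\mathbb{C}[T]=\bigoplus_{l\in\mathfrak{X}(T)}\mathbb{C}t^{-l}$, the character sublattice attached to the pair $(T,Y)$ in the notation of Theorem \ref{semitoric} is the whole lattice $\mathfrak{X}(T)$.

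Applying Theorem \ref{semitoric} to $(T,Y)$ then gives that $\mathcal{O}_Y$ is Hartogs if and only if $\mathfrak{X}(T)\cap C=0$, where $C:=(\overline{\mathfrak{X}(T)_{\mathbb{R}}\setminus|\Sigma|})^{\vee}$. Assuming $\mathcal{O}_Y$ is Hartogs, we thus have $\mathfrak{X}(T)\cap C=\{0\}$; since $L$ is a sublattice of $\mathfrak{X}(T)$, this forces $L\cap C\subseteq\mathfrak{X}(T)\cap C=\{0\}$, i.e.\ $L\cap C=\{0\}$. Applying Theorem \ref{semitoric} once more, this time to $(G,X)$, we conclude that $\mathcal{O}_X$ is Hartogs.

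The only point that requires genuine care is the bookkeeping in the second paragraph: verifying that $(T,Y)$ really does fall under the hypotheses of Theorem \ref{semitoric} (noncompactness of $Y$, a single topological end, and the fact that the lattice $L$ of the theorem specialises to $\mathfrak{X}(T)$ when $G=T$). Once that is in place the implication is purely formal --- monotonicity of the condition ``$(\,\cdot\,)\cap C=0$'' along the lattice inclusion $L\subseteq\mathfrak{X}(T)$ --- and no further cohomological input beyond Theorem \ref{semitoric} is needed.
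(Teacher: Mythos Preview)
Your argument is correct. The route differs from the paper's in one respect worth noting. The paper does not re-apply Theorem~\ref{semitoric} to the pair $(T,Y)$; instead it invokes the previously established toric criterion (\cite[Section~6.1]{Fek1} or \cite[Theorem~A]{Fek2}) that $\mathcal{O}_Y$ is Hartogs if and only if the cone $(\overline{\mathfrak{X}(T)_{\mathbb{R}}\setminus|\Sigma|})^{\vee}$ itself vanishes, and then the conclusion $L\cap C=0$ is immediate since $C=0$. Your version replaces that external citation by a second call to Theorem~\ref{semitoric} with $G=T$, obtaining the a priori weaker statement $\mathfrak{X}(T)\cap C=0$, which still suffices via the inclusion $L\subseteq\mathfrak{X}(T)$. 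This makes your proof more self-contained---it needs nothing beyond Theorem~\ref{semitoric}---at the cost of the hypothesis-checking you flag in your last paragraph; the paper's proof is a one-liner but leans on the cited references. Both arguments are sound, and for a rational cone $C$ the two conditions $C=0$ and $\mathfrak{X}(T)\cap C=0$ are in any case equivalent.
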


\begin{proof}
The sheaf $\mathcal{O}_{Y}$ is Hartogs if and only if $(\overline{\mathfrak{X}(T)_{\mathbb{R}}\setminus\mid\Sigma\mid})^{\vee}=0$ (see, for instance, \cite[Section 6.1]{Fek1} or \cite[Theorem A]{Fek2}).
\end{proof}

Note that if $\mathcal{O}_Y$ is not Hartogs, then $\mathcal{O}_{X}$ may or may not be Hartogs (see example below).

\subsection{Example}
Let $\{e_1,e_2,e_3\}$ be a basis of $\mathbb{C}^{3}$, and let $\Lambda=\mathbb{Z}\langle e_{1},e_{2},e_{3},i(e_1+e_3)\rangle$ be a lattice spanned by $e_{1},e_{2},e_{3},i(e_1+e_3)$. Note that $G=\mathbb{C}^{3}/\Lambda$ is an extension of the elliptic curve $A=\mathbb{C}/\mathbb{Z}\langle1,i\rangle$ by torus $(\mathbb{C}^{*})^{2}$. Indeed, the projection $p\colon\mathbb{C}\langle e_1,e_2,e_3\rangle\to\mathbb{C}\langle e_3\rangle$ induces the surjective map 
\begin{gather*}
\alpha\colon G\to \mathbb{C}\langle e_3\rangle/\mathbb{Z}\langle e_{3},ie_{3}\rangle\cong \mathbb{C}/\mathbb{Z}\langle1,i\rangle.
\end{gather*} 
The kernel of $\alpha$ is exactly 
\begin{gather*}
T=\ker p/\ker p\cap \Lambda=\mathbb{C}\langle e_{1},e_{2}\rangle/\mathbb{Z}\langle e_{1},e_{2}\rangle \cong (\mathbb{C}^{*})^{2}.
\end{gather*}

Now let $z_{1},z_{2},z_{3}$ be holomorphic coordinates corresponding to the basis vectors $e_1,e_2,e_3$. Let $\pi\colon \mathbb{C}^{3}\to G$ be the quotient map. If $p=(z_{1},z_{2},z_{3})$, $ q=(z_{1}',z_{2}',z_{3}')$ are points of $\mathbb{C}^{3}$, then $\pi(p)=\pi(q)$ if and only if there exist $a_1,a_2,a_3,a_4\in\mathbb{Z}$ such that 
\begin{equation}\label{eq}
z_{1}-z_{1}'=a_1+ia_4, z_{2}-z_{2}'=a_2,z_{3}-z_{3}'=a_{3}+ia_4
\end{equation}

Each $f\in \mathbb{C}[G]$ can be represented by a polynomial in $e^{i z_{1}}, e^{i z_{2}}$: 
\begin{gather*}
f=\sum\limits_{(n_1,n_2)\in\mathbb{Z}^{2}}a_{n_1,n_2}e^{in_1 z_{1}}e^{in_2 z_{2}}.
\end{gather*}

By the formulas (\ref{eq}), it follows that for any $a_1,a_2,a_4\in\mathbb{Z}$ we would have the following equalities:

$$\sum\limits_{(n_1,n_2)\in\mathbb{Z}^{2}}a_{n_1,n_2}e^{in_{1}z_{1}}e^{in_{2}z_{2}}=\sum\limits_{(n_1,n_2)\in\mathbb{Z}^{2}}a_{n_1,n_2}e^{in_{1}(z_{1}+a_{1}+ia_{4})}e^{in_{2}(z_{2}+a_2)}=\sum\limits_{(n_1,n_2)\in\mathbb{Z}^{2}}a_{n_1,n_2}e^{-n_{1}a_4} e^{in_{1}z_{1}} e^{in_{2}z_{2}}$$

It follows that for a given $n_1\in\mathbb{Z}$ we would have $e^{-n_1a_4}=1$ for all $a_{4}\in\mathbb{Z}$. Hence $n_1=0$ and $L=\{(0,n_2)\in\mathbb{Z}^{2}\}$. 

Let $w_{1},w_{2}$ be holomorphic coordinates in $\mathbb{C}^{2}$. Consider the standard algebraic torus $(\mathbb{C}^{*})^{2}\subset \mathbb{C}^{2}$, and isomorphism $\operatorname{exp}\colon T\cong (\mathbb{C}^{*})^{2}$ defined by $w_{1}=e^{iz_{1}}, w_{2}=e^{iz_{2}}$ (here $z_{i}$ are holomorphic coordinates in $\mathbb{C}^{3}$ as above). So, $\mathbb{C}[G]=\mathbb{C}[z_{2}, z_{2}^{-1}]$.

Moreover, $ \operatorname{Spec}(\mathbb{C}[G])=\mathbb{C}\langle e_2\rangle/\mathbb{Z}\langle e_{2}\rangle\cong\mathbb{C}^{*}$. The affinization map $G\to  \operatorname{Spec}(\mathbb{C}[G])$ induced by projection $\mathbb{C}\langle e_{1},e_{2},e_{3}\rangle \to \mathbb{C}\langle e_{2}\rangle$. It follows that 
\begin{gather*}
G_{ant}=\mathbb{C}\langle e_{1},e_{3}\rangle /\mathbb{Z}\langle e_{1},e_{3}, i(e_{1}+e_{3})\rangle,\\  \operatorname{Alb}(G_{ant})=\mathbb{C}\langle e_{3}\rangle/\mathbb{Z}\langle e_{3},ie_{3}\rangle = \operatorname{Alb}(G),\\
(G_{ant})_{aff}=\mathbb{C}\langle e_{1}\rangle /\mathbb{Z}\langle e_{1}\rangle\cong \mathbb{C}^{*},\\ T\cap G_{ant}=(G_{ant})_{aff}.
\end{gather*}

If $Y_{1}=\mathbb{CP}^{1}_{[w_{1}:1/w_{1}]}\times \mathbb{C}_{w_{2}}^{1}$, then the sheaf of holomorphic functions of $G\times^{T}Y_{1}$ is not Hartogs. Indeed, in this case, we have 
\begin{gather*}
S=\mathbb{CP}^{1}_{[w_{1}:1/w_{1}]}\times \mathbb{C}_{1/w_{2}}^{1}
\\
\mathbb{C}[S]=\mathbb{C}[w_{2}^{-1}]
\\
C=\{(0,n_2)\in\mathbb{Z}^{2}\mid n_{2}\leq 0\}. 
\end{gather*} 
This implies that $L\cap C=C\neq 0$.

If $Y_{2}=\mathbb{C}_{w_{1}}\times \mathbb{CP}^{1}_{[w_{2}:1/w_{2}]}$, then the sheaf of holomorphic functions of $G\times^{T}Y_{2}$ is Hartogs. Indeed, in this case, we have 
\begin{gather*} 
S=\mathbb{C}_{1/w_{1}}\times \mathbb{CP}^{1}_{[w_{2}:1/w_{2}]} \\\mathbb{C}[S]=\mathbb{C}[w_{1}^{-1}]\\
C=\{(n_1, 0)\in\mathbb{Z}^{2}\mid n_{1}\leq 0\}. 
\end{gather*} 
This implies that $L\cap C=0$.

In both cases, the sheaves of holomorphic functions over $Y_{1}$ and $ Y_{2}$ are not Hartogs.

\newpage

\end{document}